\newtheorem{theorem}{Theorem}
\newtheorem{claim}[theorem]{Claim}
\newtheorem{lemma}[theorem]{Lemma}
\newtheorem{remark}[theorem]{Remark}
\newenvironment{proof}[1][Proof]{\noindent\textbf{#1.} }{\ \rule{0.5em}{0.5em}}
\title{Long runs under point conditioning. The real case.\protect}
\author{Broniatowski Michel and Caron Virgile}
\begin{document}
\maketitle

\begin{abstract}
This paper presents a sharp approximation of the density of long runs of a random walk conditioned on its end value or by an average of a functions of its summands as  their number tends to infinity. The conditioning event is of moderate or large deviation type. The result extends the Gibbs conditional principle in the sense that it provides a description of the distribution of the random walk on long subsequences. An algorithm for the simulation of such long runs is presented, together with an algorithm determining their maximal length for which the approximation is valid up to a prescribed accuracy.
\end{abstract}

\section{Introduction and notation}

\subsection{Context and scope}

This paper explores the asymptotic distribution of a random walk conditioned
on its final value as the number of summands increases. Denote $\mathbf{X}%
_{1}^{n}:=\left( \mathbf{X}_{1}\mathbf{,..,X}_{n}\right) $ \ a set of $n$
independent copies of a real random variable $\mathbf{X}$ with density $p$
on $\mathbb{R}$ and $\mathbf{S}_{1}^{n}:=\mathbf{X}_{1}+.$\textbf{..}$+%
\mathbf{X}_{n}.$ \ We consider approximations of the density of the vector $%
\mathbf{X}_{1}^{k}\mathbf{=}\left( \mathbf{X}_{1}\mathbf{,..,X}_{k}\right) $
on $\mathbb{R}^{k}$ when $\mathbf{S}_{1}^{n}=n\left( a_{n}\sqrt{Var\mathbf{X}%
}+E\mathbf{X}\right)$ and $a_{n}$ is either fixed different from $0$ or
tends slowly to $0$ and $k:=k_{n}$ is an integer sequence such that 
\begin{equation}
0\leq\lim\sup_{n\rightarrow\infty}k/n\leq1  \label{k/nTENDSTO1}
\end{equation}
together with 
\begin{equation}
\lim_{n\rightarrow\infty}n-k=\infty.  \label{pas trop vite}
\end{equation}
Therefore we may consider the asymptopic behavior of the density of the
trajectory of the random walk on long runs. For sake of applications we also
address the case when $\mathbf{S}_{1}^{n}$ is substituted by $\mathbf{U}%
_{1}^{n}:=f\left( \mathbf{X}_{1}\right) +...+f\left( \mathbf{X}_{n}\right) $
for some real valued measurable function $f$, and when the conditioning
event writes $\left( \mathbf{U}_{1}^{n}=n\left( a_{n}\sqrt{Varf\left( 
\mathbf{X}\right) }+Ef\left( \mathbf{X}\right) \right) \right) .$

The interest in this question stems from various sources. When $k$ is fixed
(typically $k=1$) this is a version of the \textit{Gibbs Conditional
Principle } which has been studied extensively for fixed $a_{n}$, therefore
under a \textit{large deviation} condition. Diaconis and Freedman \cite%
{DiaconisFreedman} have considered this issue also in the case $%
k/n\rightarrow\theta$ for $0\leq\theta<1$, in connection with de Finetti's
Theorem for exchangeable finite sequences. Their interest was related to the
approximation of the density of $\mathbf{X}_{1}^{k}$ by the \textit{product
density }of the summands $\mathbf{X}_{i}$'s, therefore on the permanence of
the independence of the $\mathbf{X}_{i}$'s under conditioning. Their result
is in the spirit of van Camperhout and Cover \cite{VanCamperhoutCover1981}
and to be paralleled with Csiszar's \cite{Csiszar1084} asymptotic
conditional independence result, when the conditioning event is $\left( 
\mathbf{S}_{1}^{n}>n\left( a_{n}\sqrt{Var\mathbf{X}}+E\mathbf{X}%
\right)\right)$ with $a_{n}$ fixed and positive. In the same vein and under
the same \textit{large deviation} condition Dembo and Zeitouni \cite%
{DemboZeitouni1996} considered similar problems. This question is also of
importance in Statistical Physics. Numerous papers pertaining to structural
properties of polymers deal with this issue, and we refer to \cite%
{denHollanderWeiss1988} and \cite{denHollanderWeiss1988a} for a description
of those problems and related results. In the moderate deviation case
Ermakov \cite{ERmakov2006} also considered a similar problem when $k=1.$
Although out of the scope of the present paper the result which is presented
here is a cornerstone in the development of fast Importance Sampling
procedures for rare event simulation; see a first attempt in this direction
in \cite{BroniatowskiRitov2009}. In Statistics, $M-$ estimators have the
same weak behavior as the empirical mean of their influence functions on the
sampling points in the moderate deviation zone. Simulating samples under a
given value of the $M-$estimator leads to improved test procedures under
small $p-$values.

We exhibit the change in the dependence structure of the $\mathbf{X}_{i}$'s
under the conditioning as $k/n\rightarrow1$ and provide an explicit and
constructive solution to the approximation scheme. The approximating density
is obtained as an adaptive change in the classical tilting argument combined
with an adaptive change in the variance. Also when $k=o(n)$ our result
improves on existing ones since it provides a sharp approximation of the
conditional density. The present result is optimal in the sense that it
coincides with the exact conditional density in the gaussian case.

The crucial aspect of our result is the following.\ The approximation of the
density of $\mathbf{X}_{1}^{k}$ is not performed on the sequence of entire
spaces $\mathbb{R}^{k}$ but merely on a sequence of subsets of $\mathbb{R}%
^{k}$ which bear the trajectories of the conditioned random walk with
probability going to $1$ as $n$ tends to infinity; therefore the
approximation is performed on \textit{typical paths}. The reason which led
us to consider approximation in this peculiar sense is twofold. First the
approximation on typical paths is what is in fact needed for the
applications of the present results in the field of simulation and of rare
event analysis; second it avoids a number of technical conditions which are
necessary in order to get an approximation on all $\mathbb{R}^{k}$ and which
are indeed central in the above mentioned works; those conditions pertain to
the regularity of the characteristic function of the underlying density $p$
in order to get a good approximation in remote regions of $\mathbb{R}^{k}$.
Since the approximation is handled on paths generated under the conditional
density of the $\mathbf{X}_{i}$'s under the conditioning, much is known on
the region of $\mathbb{R}^{k}$ which is reached with large probability by
the conditioned random walk, through the analysis of the large values of the 
$\mathbf{X}_{i}$'s.

For sake of numerical applications we provide explicit algorithms for the
generation of such random walks together with a number of comments for the
practical implementation. Also an explicit rule for the maximal value of $k$
compatible with a given accuracy of the approximating scheme is presented
and numerical simulation supports this rule; an algorithm for its
calculation is presented.

\subsection{Notation and hypotheses}

In the context of the point conditioning 
\begin{equation*}
\mathcal{E}_{n}:=\left( \mathbf{S}_{1}^{n}=n\left( a_{n}\sqrt{Var\mathbf{X}}%
+E\mathbf{X}\right) \right)
\end{equation*}%
the hypotheses are as below. The case when $\mathbf{S}_{1}^{n}$ is
substituted by $\mathbf{U}_{1}^{n}$ is postponed to Section 3, together with
the relevant hypotheses and notation.

We assume that $\mathbf{X}$ satisfies the Cramer condition, i.e. $\mathbf{X} 
$ has a finite moment generating function $\Phi (t):=E\exp t\mathbf{X}$ in a
non void neighborhood of $0;$ denote 
\begin{equation*}
m(t):=\frac{d}{dt}\log \Phi (t)
\end{equation*}%
and 
\begin{equation*}
s^{2}(t):=\frac{d}{dt}m(t).
\end{equation*}%
The values of $m(t)$ and $s^{2}$ are the expectation and the variance of the 
\textit{tilted} density 
\begin{equation}
\pi ^{\alpha }(x):=\frac{\exp tx}{\Phi (t)}p(x)  \label{tilted density}
\end{equation}%
where $t$ is the only solution of the equation $m(t)=\alpha $ when $\alpha $
belongs to the support of $\mathbf{X}$, see Barnfoff-Nielsen \cite{Barndorff1978} for details. Denote $\Pi ^{\alpha }$ the probability measure with density $\pi ^{\alpha }$.

We also assume that the characteristic function of $\mathbf{X}$ is in $L^{r}$
for some $r\geq 1$ which is necessary for the Edgeworth expansions to be
performed.

The probability measure of the random vector $\mathbf{X}_{1}^{n}$ on $%
\mathbb{R}^{n}$ conditioned upon $\mathcal{E}_{n}$ is denoted $\mathfrak{P}%
_{n}.$ We also denote $\mathfrak{P}_{n}$ the corresponding distribution of $%
\mathbf{X}_{1}^{k}$ conditioned upon $\mathcal{E}_{n}$; the vector $\mathbf{X%
}_{1}^{k}$ then has a density with respect to the Lebesgue measure on $%
\mathbb{R}^{k}$ for $1\leq k<n$ ,which will be denoted $\mathfrak{p}_{n}$,
which might seem ambiguous but recalls that the conditioned distribution
pertains to the value of $\mathbf{S}_{1}^{n},$ from which the density of $%
\mathbf{X}_{1}^{k}$ is obtained. For a generic r.v. $\mathbf{Z}$ with
density $p$, we denote $p\left( \mathbf{Z}=z\right)$ the value of $p$ at
point $z.$

\bigskip

This paper is organized as follows. Section 2 presents the approximation
scheme for the conditional density of $\mathbf{X}_{1}^{k}$ under the point
conditioning sequence $\mathcal{E}_{n}.$ In section 3, it is extended to the
case when the conditioning family of events writes $\left( \mathbf{U}%
_{1}^{n}=n\left( a_{n}\sqrt{Varf\left( \mathbf{X}\right) }+Ef\left( \mathbf{X%
}\right) \right) \right) .$ The value of $k$ for which this approximation is
fair is discussed; an algorithm for the implementation of this rule is
proposed. Section 4 presents an algorithm for the simulation of random
variables under the approximating scheme. We have kept the main steps of the
proofs in the core of the paper; some of the technicalities is left to the
Appendix.

\bigskip

\section{Random walks conditioned on their sum}

We introduce a positive sequence $\epsilon _{n}$\ which satisfies

\begin{align}
\lim_{n\rightarrow\infty}\epsilon_{n}\sqrt{n-k} & =\infty  \tag{E1} \\
\lim_{n\rightarrow\infty}\epsilon_{n}\left( \log n\right) ^{2} & =0.\text{ }
\tag{E2}
\end{align}

It will be shown that $\epsilon_{n}\left( \log n\right) ^{2}$ is the rate of
accuracy of the approximating scheme.

We denote $a$ the generic term of the bounded sequence $\left( a_{n}\right)
_{n\geq1},$ which we assume positive, without loss of generality $\mathbf{.}$
The event $\mathcal{E}_{n}$ is of moderate or large deviation type, since we
assume that 
\begin{equation}
\lim_{n\rightarrow\infty}\frac{a^{2}}{\epsilon_{n}\left( \log n\right)^{2} }%
=\infty.  \tag{A}
\end{equation}

The case when $a$ does not depend on $n$ satisfies (A) for any sequence $%
\epsilon_{n}$ under (E1,2).\ Conditions (A) and (E1,2) jointly imply that $a$
cannot satisfy $\sqrt{n}a\rightarrow c$ for some fixed $c;$ the Central
Limit zone is not covered by our result. In order that there exists a
sequence $\epsilon_{n}$ such that the approximation of $\mathfrak{p}_{n}$
holds with rate $\epsilon_{n}\left( \log n\right) ^{2}\rightarrow0$, a
sufficient condition on $a_{n}$ is 
\begin{equation}
\lim_{n\rightarrow\infty}\frac{\sqrt{n}a^{2}}{(\log n)^{2}}=\infty
\label{a_n-e(x)}
\end{equation}
which covers both the moderate and the large deviation cases.

Under these assumptions $k$ can be fixed or can grow together with $n$ with
the restriction that $n-k$ should tend to infinity; when $a$ is fixed this
rate is governed through (E1) (or reciprocally given $k$ ,$\epsilon_{n}$ is
governed by $k$) independently on $a.$ In the moderate deviation case for a
given sequence $a$ close to $0$, $\epsilon_{n}$ has rapid decrease, which in
turn forces $n-k$ to grow rapidly.

In this section we assume that $\mathbf{X}$ has expectation $0$ and variance 
$1.$ For clearness the dependence in $n$ of all quantities involved in the
coming development is omitted in the notation.

\subsection{Approximation of the density of the runs}

Let $a=a_{n}$ denote the current term of a sequence satisfying (A). Define a
density $g_{a}(y_{1}^{k})$ on $\mathbb{R}^{k}$ as follows. Set 
\begin{equation*}
g_{0}(\left. y_{1}\right\vert y_{0}):=\pi^{a}(y_{1})
\end{equation*}
with $y_{0}$ arbitrary, and for $1\leq i\leq k-1$ define $g_{i}(\left.
y_{i+1}\right\vert y_{1}^{i})$ recursively.

Set $t_{i}$ the unique solution of the equation%
\begin{equation}
m_{i}:=m(t_{i})=\frac{n}{n-i}\left( a-\frac{s_{1}^{i}}{n}\right)  \label{mi}
\end{equation}
where $s_{1}^{i}:=y_{1}+...+y_{i}.$ The tilted adaptive family of densities $%
\pi^{m_{i}}$ is the basic ingredient of the derivation of approximating
scheme$.$ Let 
\begin{equation*}
s_{i}^{2}:=\frac{d^{2}}{dt^{2}}\left( \log E_{\pi^{m_{i}}}\exp t\mathbf{X}%
\right) \left( 0\right)
\end{equation*}
and%
\begin{equation*}
\mu_{j}^{i}:=\frac{d^{j}}{dt^{j}}\left( \log E_{\pi^{m_{i}}}\exp t\mathbf{X}%
\right) \left( 0\right) ,\text{ }j=3,4
\end{equation*}
which are the second , third and fourth centered moments of $\pi^{m_{i}}.$
Let%
\begin{equation}
g_{i}(\left. y_{i+1}\right\vert y_{1}^{i})=C_{i}p(y_{i+1})\mathfrak{n}\left(
a+\alpha\beta,\alpha,y_{i+1}\right)  \label{g-i}
\end{equation}
where $\mathfrak{n}\left( \mu,\tau,x\right) $ is the normal density with
mean $\mu$ and variance $\tau$ at $x$. Here 
\begin{equation}
\alpha=s_{i}^{2}\left( n-i-1\right)  \label{a}
\end{equation}%
\begin{equation}
\beta=t_{i}+\frac{\mu_{3}^{i}}{2s_{i}^{2}\left( n-i-1\right) }  \label{b}
\end{equation}
and $C_{i}$ is a normalizing constant.

Define 
\begin{equation}
g_{a}(y_{1}^{k}):=\prod_{i=0}^{k-1}g_{i}(\left. y_{i+1}\right\vert
y_{1}^{i}).  \label{g_s}
\end{equation}

We then have

\begin{theorem}
\label{Prop approx local cond density} Assume that (E1,2) holds together
with (A)$.$ Let $Y_{1}^{n}$ be a sample with distribution $\mathfrak{P}_{n}.$
Then 
\begin{equation}
\mathfrak{p}_{n}\left( Y_{1}^{k}\right) :=p(\left. \mathbf{X}%
_{1}^{k}=Y_{1}^{k}\right\vert \mathbf{S}_{1}^{n}=na)=g_{a}(Y_{1}^{k})(1+o_{%
\mathfrak{P}_{n}}(\epsilon_{n}\left( \log n\right) ^{2})).
\label{local approx under exact value}
\end{equation}
\end{theorem}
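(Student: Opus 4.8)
The plan is to write the exact conditional density $\mathfrak{p}_{n}(Y_1^k)$ as a telescoping product of one-dimensional conditional densities and to approximate each factor separately. First I would use the Markov-type identity
\begin{equation*}
p(\mathbf{X}_1^k = Y_1^k \mid \mathbf{S}_1^n = na) = \prod_{i=0}^{k-1} p\bigl(\mathbf{X}_{i+1} = Y_{i+1} \mid \mathbf{X}_1^i = Y_1^i,\ \mathbf{S}_1^n = na\bigr),
\end{equation*}
and observe that conditionally on $\mathbf{X}_1^i = Y_1^i$ the event $\mathcal{E}_n$ is equivalent to $\mathbf{S}_{i+1}^n = na - s_1^i$, so the $i$-th factor equals $p(\mathbf{X}_{i+1} = Y_{i+1}) \cdot p(\mathbf{S}_{i+2}^n = na - s_1^i - Y_{i+1}) / p(\mathbf{S}_{i+1}^n = na - s_1^i)$, a ratio of densities of sums of $n-i-1$ and $n-i$ i.i.d. copies of $\mathbf{X}$ evaluated at points whose per-summand average is essentially $m_i$ as defined in \eqref{mi}. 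This is exactly where the adaptive tilt $\pi^{m_i}$ enters: I would tilt both numerator and denominator by $t_i$ so that under $\Pi^{m_i}$ the relevant sums are centered at their means, at which point an Edgeworth expansion is legitimate.

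Second, I would apply an Edgeworth expansion (valid because the characteristic function of $\mathbf{X}$ is in $L^r$) to the densities of the tilted sums of $n-i-1$ summands in the numerator and $n-i$ summands in the denominator, keeping terms up to the order dictated by $\epsilon_n(\log n)^2$. The leading Gaussian term together with the first correction (the Hermite/cumulant term involving $\mu_3^i$) is what produces the normal factor $\mathfrak{n}(a+\alpha\beta,\alpha,y_{i+1})$ in \eqref{g-i}, with the mean shift absorbing $t_i$ and the third-cumulant correction, and the variance $\alpha = s_i^2(n-i-1)$ coming from the tilted variance scaled by the number of remaining summands; the $p(y_{i+1})$ prefactor is what survives the untilting of the single coordinate $\mathbf{X}_{i+1}$, and $C_i$ collects the $\Phi(t_i)$ factors and the normalization. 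I would then show that the product of these per-step relative errors is $1 + o(\epsilon_n(\log n)^2)$; here the number of factors is $k \le n$, so each factor must be controlled to relative accuracy $o(\epsilon_n(\log n)^2 / k)$ — this is why (E1), $\epsilon_n\sqrt{n-k}\to\infty$, is needed, since the Edgeworth error at step $i$ is of order $1/(n-i)$ or $1/\sqrt{n-i}$ and must be summable against the budget, and (E2) together with (A) guarantees the deviation $a$ stays in the moderate/large range where the tilt $t_i$ remains bounded and the expansions uniform.

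The main obstacle, and the reason the statement is phrased with $o_{\mathfrak{P}_n}$ rather than a deterministic $o$, is controlling the expansions \emph{uniformly along a typical trajectory}: the points $m_i$ and hence $t_i$, $s_i^2$, $\mu_3^i$ are random (they depend on $Y_1^i$), and one must show that with $\mathfrak{P}_n$-probability tending to $1$ the partial sums $s_1^i$ stay close enough to $ia$ that $m_i$ stays in a compact subset of the interior of the support of $\mathbf{X}$ uniformly in $i \le k$, so that the Edgeworth bounds hold with uniform constants. This requires a concentration/maximal-inequality argument for the conditioned walk $(\mathbf{S}_1^i)_{i \le k}$ under $\mathfrak{P}_n$ — essentially showing $\max_{i\le k}|s_1^i - ia| = O((\log n)\sqrt{n})$ or similar with high probability — which is precisely the "typical paths" analysis alluded to in the introduction and which I expect to be the technically heaviest part; the accumulation of the per-step errors into the stated global rate is then a bookkeeping exercise given (E1,2) and (A). Finally I would verify the degenerate check that in the Gaussian case all $\mu_3^i$ vanish, $s_i^2 = 1$, and $g_a$ collapses to the exact conditional density, confirming the claimed optimality.
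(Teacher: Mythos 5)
Your proposal follows essentially the same route as the paper: Bayes factorization into one-step conditional densities, the tilting-invariance identity to pass to the adaptive tilted laws $\pi^{m_i}$, Edgeworth expansions of the tilted sums in numerator and denominator, and a maximal-inequality/typical-path argument (the paper's Lemmas \ref{LemmaMaxm_in} and \ref{Lemma max X_i under conditioning}) to make the expansions uniform in $i$, with the residual errors accumulated under (E1,2) and (A). The only imprecision is the remark that each factor must be accurate to $o(\epsilon_n(\log n)^2/k)$ --- the paper instead bounds the \emph{sum} of per-step residuals, which behave like $o(\epsilon_n\log n)/(n-i)$ and $O((n-i)^{-3/2})$, via a harmonic-sum estimate and a law of large numbers for exchangeable triangular arrays --- but your subsequent summability remark already points in that direction.
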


\begin{proof}
The proof uses Bayes formula to write $p(\left. \mathbf{X}%
_{1}^{k}=Y_{1}^{k}\right\vert \mathbf{S}_{1}^{n}=na)$ as a product of $k$
conditional densities of individual terms of the trajectory evaluated at $%
Y_{1}^{k}$. Each term of this product is approximated through an Edgeworth
expansion which together with the properties of $Y_{1}^{k}$ under $\mathfrak{%
P}_{n}$ concludes the proof. This proof is rather long and we have differed
its technical steps to the Appendix.

Denote $\Sigma_{1}^{0}=0$, $\Sigma_{1}^{1}:=Y_{1}$\ $\ $\ and $%
\Sigma_{1}^{i}:=\Sigma_{1}^{i-1}+Y_{i}.$ It holds 
\begin{align}
p(\left. \mathbf{X}_{1}^{k}=Y_{1}^{k}\right\vert \mathbf{S}_{1}^{n} &
=na)=p(\left. \mathbf{X}_{1}=Y_{1}\right\vert \mathbf{S}_{1}^{n}=na)
\label{joint density} \\
\prod_{i=1}^{k-1}p(\left. \mathbf{X}_{i+1}=Y_{i+1}\right\vert \mathbf{X}%
_{1}^{i} & =Y_{1}^{i},\mathbf{S}_{1}^{n}=na)  \notag \\
& =\prod_{i=0}^{k-1}p\left( \left. \mathbf{X}_{i+1}=Y_{i+1}\right\vert 
\mathbf{S}_{i+1}^{n}=na-\Sigma_{1}^{i}\right)  \notag
\end{align}
using independence of the r.v's $\mathbf{X}_{i}^{\prime}s$.

We make use of the following property which states the invariance of
conditional densities under the tilting: For $1\leq i\leq j\leq n,$ for all $%
a$ in\ the range of $\mathbf{X},$ for all $u$ and $s$ 
\begin{equation}
p\left( \left. \mathbf{S}_{i}^{j}=u\right\vert \mathbf{S}_{1}^{n}=s\right)
=\pi^{a}\left( \left. \mathbf{S}_{i}^{j}=u\right\vert \mathbf{S}%
_{1}^{n}=s\right) .  \label{inv tilting}
\end{equation}
Define $t_{i}$ through 
\begin{equation*}
m(t_{i})=\frac{n}{n-i}\left( a-\frac{\Sigma_{1}^{i}}{n}\right)
\end{equation*}
a function of the past r.v's $Y_{1}^{i}$ and set $m_{i}:=m(t_{i})$ and $%
s_{i}^{2}:=s^{2}(t_{i}).$ By (\ref{inv tilting}) 
\begin{align*}
& p\left( \left. \mathbf{X}_{i+1}=Y_{i+1}\right\vert \mathbf{S}%
_{i+1}^{n}=na-\Sigma_{1}^{i}\right) \\
& =\pi^{m_{i}}\left( \left. \mathbf{X}_{i+1}=Y_{i+1}\right\vert \mathbf{S}%
_{i+1}^{n}=na-\Sigma_{1}^{i}\right) \\
& =\pi^{m_{i}}\left( \mathbf{X}_{i+1}=Y_{i+1}\right) \frac{\pi^{m_{i}}\left( 
\mathbf{S}_{i+2}^{n}=na-\Sigma_{1}^{i+1}\right) }{\pi^{m_{i}}\left( \mathbf{S%
}_{i+1}^{n}=na-\Sigma_{1}^{i}\right) }
\end{align*}
where we used the independence of the $\mathbf{X}_{j}$'s under $\pi^{m_{i}}.$
A precise evaluation of the dominating terms in this lattest expression is
needed in order to handle the product (\ref{joint density}).

Under the sequence of densities $\pi ^{m_{i}}$ the i.i.d. r.v's $\mathbf{X}%
_{i+1},...,\mathbf{X}_{n}$ define a triangular array which satisfies a local
central limit theorem, and an Edgeworth expansion. Under $\pi ^{m_{i}}$, $%
\mathbf{X}_{i+1}$ has expectation $m_{i}$ and variance $s_{i}^{2}.$ Center
and normalize both the numerator and denominator in the fraction which
appears in the last display. Denote $\overline{\pi _{n-i-1}}$ the density of
the normalized sum $\left( \mathbf{S}_{i+2}^{n}-(n-i-1)m_{i}\right) /\left(
s_{i}\sqrt{n-i-1}\right) $ when the summands are i.i.d. with common density $%
\pi ^{m_{i}}.$ Accordingly $\overline{\pi _{n-i}}$ is the density of $\left( 
\mathbf{S}_{i+1}^{n}-(n-i)m_{i}\right) /\left( s_{i}\sqrt{n-i}\right) $
under i.i.d.\ $\pi ^{m_{i}}$ sampling. Hence, evaluating both $\overline{\pi
_{n-i-1}}$ and its normal approximation at point $Y_{i+1},$ 
\begin{align}
& p\left( \left. \mathbf{X}_{i+1}=Y_{i+1}\right\vert \mathbf{S}%
_{i+1}^{n}=na-\Sigma _{1}^{i}\right)  \label{condTilt} \\
& =\frac{\sqrt{n-i}}{\sqrt{n-i-1}}\pi ^{m_{i}}\left( \mathbf{X}%
_{i+1}=Y_{i+1}\right) \frac{\overline{\pi _{n-i-1}}\left( \left(
m_{i}-Y_{i+1}\right) /s_{i}\sqrt{n-i-1}\right) }{\overline{\pi _{n-i}}(0)} 
\notag \\
& :=\frac{\sqrt{n-i}}{\sqrt{n-i-1}}\pi ^{m_{i}}\left( \mathbf{X}%
_{i+1}=Y_{i+1}\right) \frac{N_{i}}{D_{i}}.  \notag
\end{align}%
The sequence of densities $\overline{\pi _{n-i-1}}$ converges pointwise to
the standard normal density under (E1) which implies that $n-k$ tends to
infinity, and an Edgeworth expansion to the order 5 is performed for the
numerator and the denominator. The main arguments used in order to obtain
the order of magnitude of the envolved quantities are (i) a maximal
inequality which controls the magnitude of $m_{i}$ $\ $for all $i$ between $%
0 $ and $k-1$ (Lemma \ref{LemmaMaxm_in})$,$ (ii) the order of the maximum of
the $Y_{i}^{\prime }s$ (Lemma \ref{Lemma max X_i under conditioning}). As
proved in the Appendix, under (A) 
\begin{equation}
N_{i}=\phi\left( -Y_{i+1}/\left( s_{i}\sqrt{n-i-1}\right) \right)
.A.B+O_{\mathfrak{P}_{n}}\left( \frac{1}{\left( n-i-1\right) ^{3/2}}\right)
\label{num approx fixed i}
\end{equation}
where $\phi$ is the standard normal density and,
\begin{equation}
A:=\left( 1+\frac{aY_{i+1}}{s_{i}^{2}(n-i-1)}-\frac{a^{2}}{2s_{i}^{2}(n-i-1)}%
+\frac{o_{\mathfrak{P}_{n}}(\epsilon _{n}\log n)}{n-i-1}\right)  \label{Adem}
\end{equation}
and 
\begin{equation}
B:=\left( 
\begin{array}{c}
1-\frac{\mu _{3}^{i}}{2s_{i}^{4}\left( n-i-1\right) }(a-Y_{i+1}) \\ 
-\frac{\mu _{3}^{i}-s_{i}^{4}}{8s_{i}^{4}(n-i-1)}-\frac{15(\mu_{3}^{i})^{2}}{%
72s_{i}^{6}(n-i-1)}+\frac{O_{\mathfrak{P}_{n}}\left( (\log n)^{2}\right)}{%
\left(n-i-1\right) ^{2}}%
\end{array}%
\right)  \label{Bdem}
\end{equation}
The $O_{\mathfrak{P}_{n}}\left( \frac{1}{\left( n-i-1\right) ^{3/2}}\right) $
term in (\ref{num approx fixed i}) is uniform upon $\left(
m_{i}-Y_{i+1}\right) /s_{i}\sqrt{n-i-1}.$ Turn back to (\ref{condTilt}) and
do the same Edgeworth expansion in the demominator, which writes%
\begin{equation}
D_{i}=\phi(0)\left( 1-\frac{\mu _{3}^{i}-s_{i}^{4}}{8s_{i}^{4}(n-i)}-%
\frac{15(\mu_{3}^{i})^{2}}{72s_{i}^{6}(n-i)} \right) +O_{\mathfrak{P}_{n}}\left( 
\frac{1}{\left( n-i\right) ^{3/2}}\right) .  \label{PI 0}
\end{equation}%
The terms in $g_{i}(\left. Y_{i+1}\right\vert Y_{1}^{i})$ follow from an
expansion in the ratio of the two expressions (\ref{num approx fixed i}) and
(\ref{PI 0}) above. The gaussian contribution is explicit in (\ref{num
approx fixed i}) while the term $\exp \frac{\mu_{3}^{i}}{2s_{i}^{4}\left(
n-i-1\right) }Y_{i+1}$ is the dominant term in $B$. Turning to (\ref%
{condTilt}) and comparing with (\ref{local approx under exact value}) it
appears that the normalizing factor $C_{i}$ in $g_{i}(\left.
Y_{i+1}\right\vert Y_{1}^{i})$ compensates the term $\frac{\sqrt{n-i}}{\Phi
(t_{i})\sqrt{n-i-1}}\exp \left( \frac{-a\mu _{3}^{i}}{2s_{i}^{2}(n-i-1)}%
\right) ,$ where the term $\Phi (t_{i})$ comes from $\pi^{m_{i}}\left( 
\mathbf{X}_{i+1}=Y_{i+1}\right) .$ Further the product of the remaining
terms in the above approximations in\ (\ref{num approx fixed i}) and (\ref%
{PI 0}) turn to build the $1+o_{\mathfrak{P}_{n}}\left( \epsilon_{n}\left(
\log n\right) ^{2}\right) $ approximation rate, as claimed$.$ Details are
differed to the Appendix. This yields\ 
\begin{equation*}
p(\left. \mathbf{X}_{1}^{k}=Y_{1}^{k}\right\vert \mathbf{S}%
_{1}^{n}=na)=\left( 1+o_{\mathfrak{P}_{n}}\left( \epsilon _{n}\left( \log
n\right) ^{2}\right) \right) \prod_{i=0}^{k-1}g_{i}(\left.
Y_{i+1}\right\vert Y_{1}^{i})
\end{equation*}%
which closes the proof of the Theorem.
\end{proof}

\begin{remark}
When the $\mathbf{X}_{i}$'s are i.i.d. with a standard normal density, then
the result in the above approximation Theorem holds with $k=n-1$ stating
that $p(\left. \mathbf{X}_{1}^{n-1}=x_{1}^{n-1}\right\vert \mathbf{S}%
_{1}^{n}=na)=g_{a}\left( x_{1}^{n-1}\right) $ for all $x_{1}^{n-1}$ in $%
\mathbb{R}^{n-1}$. This extends to the case when they have an infinitely
divisible distribution. However formula (\ref{local approx under exact value}%
) holds true without the error term only in the gaussian case. Similar exact
formulas can be obtained for infinitely divisible distributions using (\ref%
{joint density}) making no use of tilting. Such formula is used to produce
Tables 1 and 2 in order to assess the validity of the selection rule for $k$
in the exponential case.
\end{remark}

\begin{remark}
The density in (\ref{g-i}) is a slight modification of $\pi^{m_{i}}.$ The
modification from $\pi^{m_{i}}$ to $g_{i}$ is a small shift in the location
parameter depending both on $a_{n}$ and on the skewness of $p$, and a change
in the variance : large values of $\ \mathbf{X}_{i+1}$ have smaller weigth
for large $i,$ so that the distribution of $\ \mathbf{X}_{i+1}$ tends to
concentrate around $m_{i}$ as $i$ approaches $k.$
\end{remark}

\begin{remark}
\label{Remark Edgeworth array}In the previous Theorem, as in Lemma \ref%
{Lemma max X_i under conditioning}, we use an Edgeworth expansion for the
density of the normalized sum of the $n-$th row of some triangular array of
row-wise independent r.v's with common density. Consider the i.i.d. r.v's $%
\mathbf{X}_{1},...,\mathbf{X}_{n}$ with common density $\pi^{a}(x)$ where $a$
may depend on $n$ but remains bounded$.$ The Edgeworth expansion pertaining
to the normalized density of $\mathbf{S}_{1}^{n}$ under $\pi^{a}$ can be
derived following closely the proof given for example in \cite{Feller1971},
pp 532 and followings substituting the cumulants of $p$ by those of $\pi^{a}$%
. Denote $\varphi_{a}(z)$ the characteristic function of $\pi^{a}(x).$
Clearly for any $\delta>0$ there exists $q_{a,\delta}<1$ such that $%
\left\vert \varphi _{a}(z)\right\vert <$ $q_{a,\delta}$ and since $a$ is
bounded, $\sup _{n}q_{a,\delta}<1.$ Therefore the inequality (2.5) in \cite%
{Feller1971} p533 holds. With $\psi_{n}$ defined as in \cite{Feller1971},
(2.6) holds with $\varphi$ replaced by $\varphi_{a}$ and $a$ by $s(t_{a});$
(2.9) holds, which completes the proof of the Edgeworth expansion in the
simple case. The proof goes in the same way for higher order expansions.
\end{remark}

\subsection{Sampling under the approximation}

Applications of Theorem \ref{Prop approx local cond density} in Importance
Sampling procedures\ and in Statistics require a reverse result.\ So assume
that $Y_{1}^{k}$ is a random vector generated under $G_{a}$ with density $%
g_{a}.$ Can we state that $g_{a}\left( Y_{1}^{k}\right) $ is a good
approximation for $\mathfrak{p}_{n}\left( Y_{1}^{k}\right) $? This holds
true. We state a simple Lemma in this direction.

Let $\mathfrak{R}_{n}$ and $\mathfrak{S}_{n}$ denote two p.m's on $\mathbb{R}%
^{n}$ with respective densities $\mathfrak{r}_{n}$ and $\mathfrak{s}_{n}.$

\begin{lemma}
\label{Lemma commute from p_n to g_n} Suppose that for some sequence $%
\varepsilon_{n}$ $\ $which tends to $0$ as $n$ tends to infinity%
\begin{equation}
\mathfrak{r}_{n}\left( Y_{1}^{n}\right) =\mathfrak{s}_{n}\left(
Y_{1}^{n}\right) \left( 1+o_{\mathfrak{R}_{n}}(\varepsilon_{n})\right)
\label{p_n equiv g_n under p_n}
\end{equation}
as $n$ tends to $\infty.$ Then 
\begin{equation}
\mathfrak{s}_{n}\left( Y_{1}^{n}\right) =\mathfrak{r}_{n}\left(
Y_{1}^{n}\right) \left( 1+o_{\mathfrak{S}_{n}}(\varepsilon_{n})\right) .
\label{g_n equiv p_n under g_n}
\end{equation}
\end{lemma}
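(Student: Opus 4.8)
The plan is to convert the statement ``$\mathfrak{r}_{n}=\mathfrak{s}_{n}(1+o_{\mathfrak{R}_{n}}(\varepsilon_{n}))$'' into the reverse statement by a direct change-of-measure argument on the sets where the relative error is controlled. First I would unwind the meaning of the hypothesis: $\mathfrak{r}_{n}(Y_{1}^{n})=\mathfrak{s}_{n}(Y_{1}^{n})(1+o_{\mathfrak{R}_{n}}(\varepsilon_{n}))$ means there is a sequence of measurable sets $A_{n}\subset\mathbb{R}^{n}$ with $\mathfrak{R}_{n}(A_{n})\to 1$ and a deterministic sequence $\eta_{n}\to 0$ such that, on $A_{n}$,
\begin{equation*}
\left| \frac{\mathfrak{r}_{n}(y_{1}^{n})}{\mathfrak{s}_{n}(y_{1}^{n})}-1\right| \le \eta_{n}\varepsilon_{n}.
\end{equation*}
(I would also note that $\mathfrak{r}_{n}$-a.s.\ $\mathfrak{s}_{n}>0$ wherever $\mathfrak{r}_{n}>0$ on $A_{n}$, which is implicit in the ratio being well defined; so on $A_{n}$ the two densities are mutually equivalent.) On $A_{n}$ one can invert the bound: since $\eta_{n}\varepsilon_{n}\to 0$, for $n$ large $1+\eta_{n}\varepsilon_{n}\le 2$ and $1-\eta_{n}\varepsilon_{n}\ge 1/2$, hence
\begin{equation*}
\left| \frac{\mathfrak{s}_{n}(y_{1}^{n})}{\mathfrak{r}_{n}(y_{1}^{n})}-1\right| = \frac{|\mathfrak{r}_{n}(y_{1}^{n})/\mathfrak{s}_{n}(y_{1}^{n})-1|}{\mathfrak{r}_{n}(y_{1}^{n})/\mathfrak{s}_{n}(y_{1}^{n})} \le \frac{\eta_{n}\varepsilon_{n}}{1-\eta_{n}\varepsilon_{n}} \le 2\eta_{n}\varepsilon_{n}
\end{equation*}
on $A_{n}$. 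Thus the ratio bound transfers for free; the only real content is that $A_{n}$ is \emph{also} typical under $\mathfrak{S}_{n}$, i.e.\ $\mathfrak{S}_{n}(A_{n})\to 1$.

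To get $\mathfrak{S}_{n}(A_{n})\to 1$ I would estimate $\mathfrak{S}_{n}(A_{n}^{c})$ by integrating $\mathfrak{s}_{n}$ over $A_{n}^{c}$ and comparing with $\mathfrak{r}_{n}$. Split $A_{n}^{c}$ into $A_{n}^{c}\cap\{\mathfrak{r}_{n}>0\}$ and $A_{n}^{c}\cap\{\mathfrak{r}_{n}=0\}$; on the latter piece there is no pointwise comparison available, so the clean way is to apply the hypothesis on a slightly shrunk good set. Concretely, let $B_{n}:=A_{n}\cap\{\mathfrak{r}_{n}>0\}$; then $\mathfrak{R}_{n}(B_{n})=\mathfrak{R}_{n}(A_{n})\to 1$, and on $B_{n}$ we have $\mathfrak{s}_{n}\le (1+\eta_{n}\varepsilon_{n})\,\mathfrak{r}_{n}\le 2\mathfrak{r}_{n}$ for large $n$. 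Hence
\begin{equation*}
\mathfrak{S}_{n}(B_{n}) = \int_{B_{n}}\mathfrak{s}_{n}\,d\lambda \le 2\int_{B_{n}}\mathfrak{r}_{n}\,d\lambda = 2\,\mathfrak{R}_{n}(B_{n}),
\end{equation*}
which is the wrong direction. So instead I would bound $\mathfrak{S}_{n}(B_{n}^{c})$ from above — but $B_{n}^{c}$ can have large $\mathfrak{s}_{n}$-mass a priori, so a naive bound fails, and this is exactly the point that needs the hypothesis used correctly. The fix: apply the hypothesis to control $\mathfrak{s}_{n}$ from \emph{below} on the good set, namely $\mathfrak{s}_{n}\ge (1-\eta_{n}\varepsilon_{n})^{-1}\mathfrak{r}_{n}$... no — rather $\mathfrak{s}_{n}\ge \mathfrak{r}_{n}/(1+\eta_{n}\varepsilon_{n})$ on $B_{n}$. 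Then $\mathfrak{S}_{n}(B_{n})\ge (1+\eta_{n}\varepsilon_{n})^{-1}\mathfrak{R}_{n}(B_{n})\to 1$, so $\mathfrak{S}_{n}(A_{n})\ge \mathfrak{S}_{n}(B_{n})\to 1$, as required. (The set $\{\mathfrak{r}_{n}=0\}$ has $\mathfrak{r}_{n}$-measure zero and can be absorbed into $A_{n}^{c}$ without changing $\mathfrak{R}_{n}(A_{n})\to 1$; that is why passing to $B_{n}$ is harmless.)

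With $A_{n}$ shown to be typical under both $\mathfrak{R}_{n}$ and $\mathfrak{S}_{n}$, and the reciprocal ratio bound $|\mathfrak{s}_{n}/\mathfrak{r}_{n}-1|\le 2\eta_{n}\varepsilon_{n}$ holding on $A_{n}$ (or on $B_{n}$, which also has $\mathfrak{S}_{n}$-mass tending to $1$), the definition of $o_{\mathfrak{S}_{n}}(\varepsilon_{n})$ is met with the deterministic sequence $2\eta_{n}\to 0$, giving $\mathfrak{s}_{n}(Y_{1}^{n})=\mathfrak{r}_{n}(Y_{1}^{n})(1+o_{\mathfrak{S}_{n}}(\varepsilon_{n}))$. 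The main obstacle — really the only subtlety — is the step establishing $\mathfrak{S}_{n}(A_{n})\to 1$: one must resist the temptation to bound $\int_{A_{n}^{c}}\mathfrak{s}_{n}$ directly (impossible without more information) and instead lower-bound $\int_{A_{n}}\mathfrak{s}_{n}$ using the pointwise comparison valid on $A_{n}$, which is legitimate precisely because the comparison is one of densities, not of the measures of arbitrary sets. Everything else is the elementary inequality $|1/x-1|\le |x-1|/(1-|x-1|)$ for $x$ near $1$. I would remark in passing that this Lemma is what licenses reading Theorem~\ref{Prop approx local cond density} ``in reverse'', i.e.\ that $g_{a}(Y_{1}^{k})$ approximates $\mathfrak{p}_{n}(Y_{1}^{k})$ when $Y_{1}^{k}$ is drawn from $G_{a}$, which is the form needed for the importance-sampling applications.
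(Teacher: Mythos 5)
Your proposal is correct and follows essentially the same route as the paper: the paper defines the good set $A_{n,\delta\varepsilon_{n}}$ where the ratio is controlled, writes $\mathfrak{R}_{n}(A_{n,\delta\varepsilon_{n}})=\int \mathbf{1}_{A_{n,\delta\varepsilon_{n}}}(\mathfrak{r}_{n}/\mathfrak{s}_{n})\,\mathfrak{s}_{n}$ and uses the upper bound $\mathfrak{r}_{n}\leq(1+\delta\varepsilon_{n})\mathfrak{s}_{n}$ there to conclude $\mathfrak{S}_{n}(A_{n,\delta\varepsilon_{n}})\geq(1+\delta\varepsilon_{n})^{-1}\mathfrak{R}_{n}(A_{n,\delta\varepsilon_{n}})\rightarrow 1$, which is exactly your lower bound on $\mathfrak{S}_{n}(B_{n})$. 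Your extra care about the set $\{\mathfrak{r}_{n}=0\}$ and the explicit inversion of the ratio are fine refinements of the same argument, not a different one.
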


\begin{proof}
Denote 
\begin{equation*}
A_{n,\varepsilon_{n}}:=\left\{ y_{1}^{n}:(1-\varepsilon_{n})\mathfrak{s}%
_{n}\left( y_{1}^{n}\right) \leq\mathfrak{r}_{n}\left( y_{1}^{n}\right) \leq%
\mathfrak{s}_{n}\left( y_{1}^{n}\right) (1+\varepsilon_{n})\right\} .
\end{equation*}
It holds for all positive $\delta$%
\begin{equation*}
\lim_{n\rightarrow\infty}\mathfrak{R}_{n}\left(
A_{n,\delta\varepsilon_{n}}\right) =1.
\end{equation*}
Write 
\begin{equation*}
\mathfrak{R}_{n}\left( A_{n,\delta\varepsilon_{n}}\right) =\int \mathbf{1}%
_{A_{n,\delta\varepsilon_{n}}}\left( y_{1}^{n}\right) \frac{\mathfrak{r}%
_{n}\left( y_{1}^{n}\right) }{\mathfrak{s}_{n}(y_{1}^{n})}\mathfrak{s}%
_{n}(y_{1}^{n})dy_{1}^{n}.
\end{equation*}
Since 
\begin{equation*}
\mathfrak{R}_{n}\left( A_{n,\delta\varepsilon_{n}}\right) \leq
(1+\delta\varepsilon_{n})\mathfrak{S}_{n}\left(
A_{n,\delta\varepsilon_{n}}\right)
\end{equation*}
it follows that 
\begin{equation*}
\lim_{n\rightarrow\infty}\mathfrak{S}_{n}\left(
A_{n,\delta\varepsilon_{n}}\right) =1,
\end{equation*}
which proves the claim.
\end{proof}

As a direct by-product of Theorem \ref{Prop approx local cond density} and
Lemma \ref{Lemma commute from p_n to g_n} we obtain

\begin{theorem}
\label{ThmdeSimulation} Assume (A), (E1,2). Then when $Y_{1}^{n}$ is
generated under the distribution $G_{a}$ it holds%
\begin{equation*}
\mathfrak{p}_{n}\left( Y_{1}^{k}\right)
=g_{a}(Y_{1}^{k})(1+o_{G_{a}}(\epsilon _{n}\left( \log n\right) ^{2}))
\end{equation*}%
with $\mathfrak{p}_{n}$ defined in (\ref{local approx under exact value}).
\end{theorem}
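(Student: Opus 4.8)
The plan is to obtain Theorem \ref{ThmdeSimulation} as an immediate consequence of Theorem \ref{Prop approx local cond density} and Lemma \ref{Lemma commute from p_n to g_n}, by applying the latter with the roles of the two probability measures filled by $\mathfrak{P}_{n}$ and $G_{a}$ restricted to the first $k$ coordinates. First I would record that Theorem \ref{Prop approx local cond density} states precisely
\[
\mathfrak{p}_{n}\left( Y_{1}^{k}\right) =g_{a}(Y_{1}^{k})\left(1+o_{\mathfrak{P}_{n}}(\epsilon_{n}(\log n)^{2})\right),
\]
i.e. the density of $\mathbf{X}_{1}^{k}$ under the conditional law is equivalent to $g_{a}$, with the approximation holding in $\mathfrak{P}_{n}$-probability. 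Then I would invoke Lemma \ref{Lemma commute from p_n to g_n} with $\mathfrak{r}_{n}:=\mathfrak{p}_{n}$, $\mathfrak{s}_{n}:=g_{a}$, $\mathfrak{R}_{n}:=\mathfrak{P}_{n}$, $\mathfrak{S}_{n}:=G_{a}$ and $\varepsilon_{n}:=\epsilon_{n}(\log n)^{2}$, which tends to $0$ under (E2). The lemma yields
\[
g_{a}(Y_{1}^{k})=\mathfrak{p}_{n}\left( Y_{1}^{k}\right)\left(1+o_{G_{a}}(\epsilon_{n}(\log n)^{2})\right),
\]
and inverting the factor $(1+o_{G_{a}}(\epsilon_{n}(\log n)^{2}))$ — legitimate since it tends to $1$ in $G_{a}$-probability — gives exactly the asserted conclusion.

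The only point requiring a word of care is that Lemma \ref{Lemma commute from p_n to g_n} is phrased for densities on $\mathbb{R}^{n}$, whereas here the relevant densities $\mathfrak{p}_{n}$ and $g_{a}$ live on $\mathbb{R}^{k}$ with $k=k_{n}<n$. I would note that the lemma's proof is insensitive to this: it only uses that $\mathfrak{r}_{n}$ and $\mathfrak{s}_{n}$ are densities with respect to a common dominating measure and that the set $A_{n,\delta\varepsilon_{n}}$ on which they are $\varepsilon_{n}$-close carries asymptotic mass $1$ under $\mathfrak{R}_{n}$ — both of which hold verbatim with $\mathbb{R}^{k}$ in place of $\mathbb{R}^{n}$ and with the marginals of $\mathfrak{P}_{n}$ and $G_{a}$ on the first $k$ coordinates. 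So one simply applies the lemma ``on $\mathbb{R}^{k}$'' rather than on $\mathbb{R}^{n}$.

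There is no genuine obstacle here; the content of the result is entirely carried by Theorem \ref{Prop approx local cond density} and by the symmetrization mechanism of Lemma \ref{Lemma commute from p_n to g_n}, and the present statement is a packaging of the two. If one wished to be fully explicit, the single step worth spelling out is the passage from $\mathfrak{R}_{n}(A_{n,\delta\varepsilon_{n}})\to 1$ to $\mathfrak{S}_{n}(A_{n,\delta\varepsilon_{n}})\to 1$, i.e. the transfer of the typicality of $A_{n,\delta\varepsilon_{n}}$ from $\mathfrak{P}_{n}$ to $G_{a}$, which is exactly what the chain of inequalities in the proof of Lemma \ref{Lemma commute from p_n to g_n} delivers; from there the equivalence of the two densities in $G_{a}$-probability is immediate. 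Hence the proof reduces to citing Theorem \ref{Prop approx local cond density} and Lemma \ref{Lemma commute from p_n to g_n}, observing the $\mathbb{R}^{k}$ versus $\mathbb{R}^{n}$ adjustment, and rearranging the resulting relation into the stated form.
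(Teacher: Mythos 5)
Your proposal is correct and coincides with the paper's own argument: the paper derives Theorem \ref{ThmdeSimulation} as a direct by-product of Theorem \ref{Prop approx local cond density} and Lemma \ref{Lemma commute from p_n to g_n}, exactly as you do. Your additional remarks on inverting the $(1+o_{G_{a}}(\epsilon_{n}(\log n)^{2}))$ factor and on applying the lemma on $\mathbb{R}^{k}$ rather than $\mathbb{R}^{n}$ are sound points of care that the paper leaves implicit.
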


\section{Random walks conditioned by the mean of a function of their summands}

This section extends the above results to the case when the conditioning
event writes 
\begin{equation}
\mathbf{U}_{1}^{n}:=f\left( \mathbf{X}_{1}\right) +...+f\left( \mathbf{X}%
_{n}\right) =n\left( \sigma a+\mu \right) .  \label{cond sur f(X)}
\end{equation}%
The function $f$ is real valued, $Ef\left( \mathbf{X}\right) =\mu $ and $%
Varf\left( \mathbf{X}\right) =\sigma ^{2}.$ The characteristic function of
the random variable $f\left( \mathbf{X}\right) $ is assumed to belong to $%
L^{r}$ for some $r\geq 1.$ As previously $a$ is assumed positive. Let $p_{%
\mathbf{X}}$ denote the density of the r.v. $\mathbf{X}$\textbf{.}

Assume 
\begin{equation*}
\phi_{f}(t):=E\exp tf\left( \mathbf{X}\right) <\infty
\end{equation*}
for $t$ in a non void neighborhood of $0.$ Define the functions $%
m_{f}(t),s_{f}^{2}(t)$ and $\mu_{f,3}(t)$ as the first, second and third
derivatives of $\log\mathfrak{\phi}_{f}(t).$

Denote 
\begin{equation*}
\pi _{f}^{\alpha }(x):=\frac{\exp tf(x)}{\phi _{f}(t)}p_{\mathbf{X}}\left(
x\right)
\end{equation*}%
with $m_{f}(t)=\alpha $ and $\alpha $ belongs to the support of $P_{f}$, the
distribution of $f\left( \mathbf{X}\right) ,$ with density $p_{f}.$
Conditions on $\phi_{f}(t)$ which ensure existence and uniqueness of $t$ are referred to as \textit{steepness properties}, and are exposed in \cite{Barndorff1978}

Assume that (A) holds and the sequence $\epsilon_{n}$ satisfies (E1,2).

\subsection{Approximation of the density of the runs}

Define a density $h_{\sigma a+\mu }(y_{1}^{k})$ with c.d.f. $H_{\sigma a+\mu
}$ on $\mathbb{R}^{k}$ as follows. Set 
\begin{equation*}
h_{0}(\left. y_{1}\right\vert y_{0}):=\pi _{f}^{\sigma a+\mu }(y_{1})
\end{equation*}%
with $y_{0}$ arbitrary and for $1\leq i\leq k-1$ define $h_{i}(\left.
y_{i+1}\right\vert y_{1}^{i})$ recursively.

Set $t_{i}$ the unique solution of the equation%
\begin{equation}
m_{i}:=m_{f}(t_{i})=\frac{n}{n-i}\left( \sigma a+\mu -\frac{u_{1}^{i}%
}{n}\right)  \label{mif}
\end{equation}%
where $u_{1}^{i}:=f\left( y_{1}\right) +...+f\left( y_{i}\right) .$

Define 
\begin{equation}  \label{gif}
h_{i}(\left. y_{i+1}\right\vert y_{1}^{i})=C_{i}p_{\mathbf{X}}(y_{i+1})%
\mathfrak{n}\left( \alpha \beta +\left(\sigma a+\mu\right),\alpha ,f(y_{i+1})\right)
\end{equation}
where $C_{i}$ is a normalizing constant.\ Here 
\begin{equation}
\alpha=s_{f}^{2}(t_{i})\left( n-i-1\right)
\label{a pour f(x)}
\end{equation}
\begin{equation}
\beta =t_{i}+\frac{\mu _{f,3}\left( t_{i}\right) }{2s_{f}^{4}(t_{i})\left( n-i-1\right) }.
\label{b pour f(x)}
\end{equation}%
Set%
\begin{equation}
h_{\sigma a+\mu }\left( y_{1}^{k}\right)
:=\prod\limits_{i=0}^{k-1}h_{i}(\left. y_{i+1}\right\vert y_{1}^{i}).
\label{gasigmamu}
\end{equation}

Denote $\mathfrak{P}_{n}^{f}$ the distribution of $\mathbf{X}_{1}^{n}$
conditioned upon $\left( \mathbf{U}_{1}^{n}=n\left( \sigma a+\mu \right)
\right) $ and $\mathfrak{p}_{n}^{f}$ its density when restricted on $\mathbb{%
R}^{k};$ therefore%
\begin{equation}
\mathfrak{p}_{n}^{f}\left( \mathbf{X}_{1}^{k}=Y_{1}^{k}\right) :=p(\left. 
\mathbf{X}_{1}^{k}=Y_{1}^{k}\right\vert \mathbf{U}_{1}^{n}=n\left( \sigma
a+\mu \right) ).  \label{p_n^f}
\end{equation}

\begin{theorem}
\label{Prop approx sous f(x)=a} Assume (A) and (E1,2) . Then (i) 
\begin{equation*}
\mathfrak{p}_{n}^{f}\left( \mathbf{X}_{1}^{k}=Y_{1}^{k}\right) =h_{\sigma
a+\mu }(Y_{1}^{k})(1+o_{\mathfrak{P}_{n}^{f}}(\epsilon _{n}\left( \log
n\right) ^{2}))
\end{equation*}%
and (ii) 
\begin{equation*}
\mathfrak{p}_{n}^{f}\left( \mathbf{X}_{1}^{k}=Y_{1}^{k}\right) =h_{\sigma
a+\mu }(Y_{1}^{k})(1+o_{H_{\sigma a+\mu }}(\epsilon _{n}\left( \log n\right)
^{2})).
\end{equation*}
\end{theorem}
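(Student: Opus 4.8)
The key observation is that the entire analysis of Section 2 goes through with $\mathbf{X}_i$ replaced by $f(\mathbf{X}_i)$ in the conditioning mechanism, while the density of the individual coordinate $\mathbf{X}_{i+1}$ inherits an extra factor $p_{\mathbf{X}}(y_{i+1})$ coming from the change of variables $y_{i+1}\mapsto f(y_{i+1})$. Concretely, the plan is as follows. First, I would write the analogue of the Bayes factorisation (\ref{joint density}): with $\Sigma_{1}^{i}:=f(Y_1)+\dots+f(Y_i)$ one has
\begin{equation*}
p(\left.\mathbf{X}_1^k=Y_1^k\right\vert\mathbf{U}_1^n=n(\sigma a+\mu))=\prod_{i=0}^{k-1}p\left(\left.\mathbf{X}_{i+1}=Y_{i+1}\right\vert f(\mathbf{X}_{i+1})+\mathbf{U}_{i+2}^n=n(\sigma a+\mu)-\Sigma_1^i\right),
\end{equation*}
using independence of the $\mathbf{X}_j$'s. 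Next I would establish the tilting invariance: for the tilted family $\pi_f^{\alpha}$ the conditional law of $\mathbf{X}_{i+1}$ given the value of $\mathbf{U}_{i+1}^n$ is unchanged, exactly as in (\ref{inv tilting}), since tilting by $\exp(tf(x))$ only reweights through $f$. Defining $t_i$ by (\ref{mif}) and $m_i=m_f(t_i)$, $s_i^2=s_f^2(t_i)$, each factor becomes
\begin{equation*}
p\left(\left.\mathbf{X}_{i+1}=Y_{i+1}\right\vert\cdots\right)=\pi_f^{m_i}(\mathbf{X}_{i+1}=Y_{i+1})\,\frac{\pi_f^{m_i}\left(\mathbf{U}_{i+2}^n=n(\sigma a+\mu)-\Sigma_1^{i+1}\right)}{\pi_f^{m_i}\left(\mathbf{U}_{i+1}^n=n(\sigma a+\mu)-\Sigma_1^i\right)},
\end{equation*}
and here $\pi_f^{m_i}(\mathbf{X}_{i+1}=Y_{i+1})=\exp(t_i f(Y_{i+1}))\phi_f(t_i)^{-1}p_{\mathbf{X}}(Y_{i+1})$, which is the source of the $p_{\mathbf{X}}(y_{i+1})$ prefactor in (\ref{gif}).

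The heart of the argument is then identical to Theorem \ref{Prop approx local cond density}. Under $\pi_f^{m_i}$ the variables $f(\mathbf{X}_{i+1}),\dots,f(\mathbf{X}_n)$ are i.i.d. with mean $m_i$, variance $s_i^2$, and third cumulant $\mu_{f,3}(t_i)$; they form a triangular array to which the Edgeworth expansion of Remark \ref{Remark Edgeworth array} applies (the hypothesis that the characteristic function of $f(\mathbf{X})$ lies in $L^r$ is precisely what is needed, and the boundedness of $a$ together with the maximal inequality of Lemma \ref{LemmaMaxm_in} — applied to $m_i$ defined through $f$ — controls $q_{a,\delta}$ uniformly). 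Centering and normalising numerator and denominator and expanding to order $5$ at the point $f(Y_{i+1})$ yields, verbatim up to the substitutions $Y_{i+1}\rightsquigarrow f(Y_{i+1})$, $a\rightsquigarrow\sigma a+\mu-$ (more precisely the shifted mean), $\mu_3^i\rightsquigarrow\mu_{f,3}(t_i)$, $s_i^2\rightsquigarrow s_f^2(t_i)$, the analogues of (\ref{num approx fixed i})–(\ref{PI 0}). One also needs the analogue of Lemma \ref{Lemma max X_i under conditioning}, namely a bound on $\max_{i\le k}|f(Y_i)|$ under $\mathfrak{P}_n^f$, which follows by the same large-deviation argument applied to the random walk $\mathbf{U}_1^n$. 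Taking the ratio, identifying the Gaussian factor $\mathfrak{n}(\alpha\beta+(\sigma a+\mu),\alpha,f(y_{i+1}))$ with $\alpha,\beta$ as in (\ref{a pour f(x)})–(\ref{b pour f(x)}), and absorbing the deterministic prefactor $\frac{\sqrt{n-i}}{\phi_f(t_i)\sqrt{n-i-1}}\exp(\cdot)$ into $C_i$, one obtains part (i) with the rate $1+o_{\mathfrak{P}_n^f}(\epsilon_n(\log n)^2)$ after multiplying the $k$ factors. Part (ii) is then immediate from Lemma \ref{Lemma commute from p_n to g_n} applied with $\mathfrak{R}_n=\mathfrak{P}_n^f$, $\mathfrak{S}_n=H_{\sigma a+\mu}$ and $\varepsilon_n=\epsilon_n(\log n)^2$, exactly as Theorem \ref{ThmdeSimulation} was deduced.

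The main obstacle is not conceptual but bookkeeping: one must check that the constants appearing in the maximal inequality (Lemma \ref{LemmaMaxm_in}) and in the uniform bound $\sup_n q_{a,\delta}<1$ of Remark \ref{Remark Edgeworth array} still hold when the summands are $f(\mathbf{X}_i)$ rather than $\mathbf{X}_i$ — this requires only that $f(\mathbf{X})$ satisfies the Cramér condition and has characteristic function in $L^r$, both of which are assumed — and that the error terms $O_{\mathfrak{P}_n^f}((n-i-1)^{-3/2})$ remain uniform in $(m_i-f(Y_{i+1}))/(s_i\sqrt{n-i-1})$ so that the product over $i=0,\dots,k-1$ telescopes into a single $o(\epsilon_n(\log n)^2)$ rate under the growth conditions (E1)–(E2) and (A). Since none of these estimates uses any structural property of $\mathbf{X}$ beyond those transferred to $f(\mathbf{X})$, the proof of Section 2 can be quoted almost line by line, and I would present it as such in the Appendix, indicating only the substitutions above and the one new point, the change-of-variables factor $p_{\mathbf{X}}(y_{i+1})$.
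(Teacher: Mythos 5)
Your proposal matches the paper's own proof, which likewise writes the Bayes factorisation for $\mathbf{U}_{1}^{n}$, invokes the tilting invariance under $\pi_{f}^{m_{i}}$ so that each factor becomes $p\left( \mathbf{X}_{i+1}=Y_{i+1}\right) e^{t_{i}f(Y_{i+1})}\phi_{f}(t_{i})^{-1}$ times a ratio of densities of $\mathbf{U}_{i+2}^{n}$ and $\mathbf{U}_{i+1}^{n}$, then repeats the Edgeworth analysis of Theorem \ref{Prop approx local cond density} verbatim with $f(\mathbf{X}_{i})$ in place of $\mathbf{X}_{i}$, and deduces (ii) from Lemma \ref{Lemma commute from p_n to g_n}. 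Your write-up is in fact somewhat more explicit than the paper's (which omits the details entirely), correctly identifying the one genuinely new ingredient, namely the prefactor $p_{\mathbf{X}}(y_{i+1})$ arising because the tilting acts through $f$ while the density is evaluated in the $x$ variable.
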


\begin{proof}
We only sketch the initial step of the proof of (i), which rapidly follows
the same track as that in Theorem \ref{Prop approx local cond density}.
Denote $U_{i}^{j}:=f(Y_{i})+...+f(Y_{j}).$

As in the proof of \ Theorem \ref{Prop approx local cond density} evaluate 
\begin{align*}
p\left( \left. \mathbf{X}_{i+1}=Y_{i+1}\right\vert \mathbf{U}%
_{i+1}^{n}=n\left( \sigma a+\mu \right) -U_{1}^{i}\right) \\
& =p\left( \mathbf{X}_{i+1}=Y_{i+1}\right) \frac{p\left( \mathbf{U}%
_{i+2}^{n}=n\left( \sigma a+\mu \right) -U_{1}^{i+1}\right) }{p\left( 
\mathbf{U}_{i+1}^{n}=n\left( \sigma a+\mu \right) -U_{1}^{i}\right) }.
\end{align*}

Use the tilting invariance under $\mathfrak{\pi }_{f}^{m_{i}}$ leading to 
\begin{align*}
p\left( \left. \mathbf{X}_{i+1}=Y_{i+1}\right\vert \mathbf{U}%
_{i+1}^{n}=n\left( \sigma a+\mu \right) -U_{1}^{i}\right) \\
& =\pi_{f}^{m_{i}}\left( \mathbf{X}_{i+1}=Y_{i+1} \right) \frac{\pi
_{f}^{m_{i}}\left( \mathbf{U}_{i+2}^{n}=n\left( \sigma a+\mu \right)
-U_{1}^{i+1}\right) }{\pi _{f}^{m_{i}}\left( \mathbf{T}_{i+1}^{n}=n\left(
\sigma a+\mu \right) -U_{1}^{i}\right) } \\
& =p\left( \mathbf{X}_{i+1}=Y_{i+1}\right) \frac{e^{t_{i}f\left(
Y_{i+1}\right) }}{\phi _{f}(t_{i})}\frac{\pi _{f}^{m_{i}}\left( \mathbf{U}%
_{i+2}^{n}=n\left( \sigma a+\mu \right) -U_{1}^{i+1}\right) }{\pi
_{f}^{m_{i}}\left( \mathbf{U}_{i+1}^{n}=n\left( \sigma a+\mu \right)
-U_{1}^{i}\right) }
\end{align*}%
and proceed through the Edgeworth expansions in the above expression,
following verbatim the proof of Theorem \ref{Prop approx local cond density}%
. We omit details. The proof of (ii) follows from Lemma \ref{Lemma commute
from p_n to g_n}
\end{proof}

\subsection{How far is the approximation valid?}

This section provides a rule leading to an effective choice of the crucial
parameter $k$ in order to achieve a given accuracy bound for the relative
error. The generic r.v. $\mathbf{X}$ has density $p_{\mathbf{X}}$ and $%
f\left( \mathbf{X}\right) $ has mean $\mu $ and variance $\sigma ^{2}.$ The
density $\mathfrak{p}_{n}^{f}$ is defined in (\ref{p_n^f}). The accuracy of
the approximation is measured through%
\begin{equation*}
ERE(k):=E_{h_{\sigma a+\mu }}1_{D_{k}}\left( Y_{1}^{k}\right) \frac{%
\mathfrak{p}_{n}^{f}\left( Y_{1}^{k}\right) -h_{\sigma a+\mu }\left(
Y_{1}^{k}\right) }{\mathfrak{p}_{n}^{f}\left( Y_{1}^{k}\right) }
\end{equation*}%
and 
\begin{equation}
VRE(k):=Var_{h_{\sigma a+\mu }}1_{D_{k}}\left( Y_{1}^{k}\right) \frac{%
\mathfrak{p}_{n}^{f}\left( Y_{1}^{k}\right) -h_{\sigma a+\mu }\left(
Y_{1}^{k}\right) }{\mathfrak{p}_{n}^{f}\left( Y_{1}^{k}\right) }  \label{RE}
\end{equation}%
respectively the expectation and the variance of the relative error of the
approximating scheme when evaluated on $D_{k}$ , the subset of $\mathbb{R}%
^{k}$ \ where $\left\vert h_{\sigma a+\mu }(Y_{1}^{k})/\mathfrak{p}%
_{n}^{f}\left( Y_{1}^{k}\right) -1\right\vert <\delta _{n}$ with $\epsilon
_{n}\left( \log n\right) ^{2}/\delta _{n}\rightarrow 0$ and $\delta
_{n}\rightarrow 0;$ therefore $H_{\sigma a+\mu }\left( D_{k}\right)
\rightarrow 1.$The r.v$^{\prime }$s $Y_{1}^{k}$ are sampled under $h_{\sigma
a+\mu }.$ Note that the density $\mathfrak{p}_{n}^{f}$ is usually unknown.
The argument is somehow heuristic and unformal; nevertheless the rule is
simple to implement and provides good results. We assume that the set $D_{k}$
$\ $can be substituted by $\mathbb{R}^{k}$ in the above formulas, therefore
assuming that the relative error has bounded variance, which would require
quite a lot of work to be proved under appropriate conditions, but which
seems to hold, at least in all cases considered by the authors. We keep the
above notation omitting therefore any reference to $D_{k}$ .

Consider a two-sigma confidence bound for the relative accuracy for a given $%
k$, defining%
\begin{equation*}
CI(k):=\left[ ERE(k)-2\sqrt{VRE(k)},ERE(k)+2\sqrt{VRE(k)}\right] .
\end{equation*}%
\bigskip

Let $\delta $ denote an acceptance level for the relative accuracy. Accept $%
k $ until $\delta $ belongs to $CI(k).$ For such $k$ the relative accuracy
is certified up to the level $5\%$ roughly.

The calculation of $VRE(k)$ and $ERE(k)$ should be done as follows.

Write 
\begin{align}
VRE(k)^{2}& =E_{p_{\mathbf{X}}}\left( \frac{h_{\sigma a+\mu }^{3}\left(
Y_{1}^{k}\right) }{\mathfrak{p}_{n}^{f}\left( Y_{1}^{k}\right) ^{2}p_{\mathbf{X}}\left( Y_{1}^{k}\right)}\right) \\
& -E_{p_{\mathbf{X}}}\left( \frac{h_{\sigma a+\mu }^{2}\left( Y_{1}^{k}\right) }{\mathfrak{p}_{n}^{f}\left( Y_{1}^{k}\right)p_{\mathbf{X}}\left( Y_{1}^{k}\right)}\right) ^{2} \\
& =:A-B^{2}.
\end{align}%
where $\pi ^{a}$ is defined in (\ref{tilted density}). By Bayes formula

\begin{equation} \label{Jensen_dans_k_limite}
\mathfrak{p}_{n}^{f}\left( Y_{1}^{k}\right) =p_{\mathbf{X}}\left(
Y_{1}^{k}\right) \frac{np_{\mathbf{U}_{k+1}^{n}/(n-k)}\left(
m_{f}(t_{k})\right) }{\left( n-k\right) p_{\mathbf{U}_{1}^{n}/n}\left( \sigma a+\mu \right) }. 
\end{equation}%
The following Lemma holds; see \cite{Jensen1995} and \cite{Richter1957}.

\begin{lemma} \label{LemmaJensen} 
Let $\mathbf{X}_{1},...,\mathbf{X}_{n}$ be i.i.d. random variables with common density $p$ on $\mathbb{R}$ and satisfying the Cramer conditions with m.g.f. $\phi$. Then with $\left( \log\phi\right)^{\prime }(t)=u,$ 
\begin{equation*}
p_{\mathbf{S}_{1}^{n}/n}\left( u\right) =\frac{\sqrt{n}\phi^{n}(t)\exp -ntu}{%
s(t)\sqrt{2\pi}}\left( 1+o(1)\right)
\end{equation*}
in the range of the large or moderate deviations, i.e. when $|u|\sqrt {n}%
\rightarrow\infty$ and $|u|$ is bounded from above.
\end{lemma}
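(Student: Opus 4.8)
The plan is to reduce the statement to a local central limit theorem with an Edgeworth correction after an exponential change of measure, exactly in the spirit of the \emph{saddlepoint approximation}. First I would introduce the tilted density $\pi^{u}(x)=e^{tx}p(x)/\phi(t)$, where $t$ is the (unique, by steepness of $\phi$) solution of $(\log\phi)'(t)=u$; note that $u$ lies in the interior of the convex hull of the support precisely because $|u|$ is bounded from above and, implicitly, bounded away from the endpoints. Under $\pi^{u}$ the i.i.d. summands $\mathbf{X}_1,\dots,\mathbf{X}_n$ have common mean $u$ and common variance $s^2(t)>0$. The elementary tilting identity for densities of sums gives
\begin{equation*}
p_{\mathbf{S}_1^n}(nu)=\phi^n(t)\,e^{-ntu}\,\pi^{u}_{\mathbf{S}_1^n}(nu),
\end{equation*}
and since $p_{\mathbf{S}_1^n/n}(u)=n\,p_{\mathbf{S}_1^n}(nu)$, it remains to show that the density of $\mathbf{S}_1^n$ under $\pi^u$, evaluated at its own mean $nu$, is $\bigl(s(t)\sqrt{2\pi n}\bigr)^{-1}(1+o(1))$.

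The next step is to apply the local limit theorem with Edgeworth expansion to the normalized sum $\bigl(\mathbf{S}_1^n-nu\bigr)/\bigl(s(t)\sqrt{n}\bigr)$ under $\pi^u$, evaluated at the point $0$. This is exactly the triangular-array Edgeworth expansion described in Remark \ref{Remark Edgeworth array}: the characteristic function $\varphi_u$ of $\pi^u$ satisfies $|\varphi_u(z)|\le q_{u,\delta}<1$ outside any neighbourhood of the origin, and since $u$ ranges over a bounded set one has $\sup_n q_{u,\delta}<1$, so the standard Fourier-inversion argument of Feller (XVI.2, pp.~532--533) applies with the cumulants of $p$ replaced by those of $\pi^u$; the $L^r$ assumption on the characteristic function secures integrability of $\varphi_u^n$ for $n\ge r$. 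The expansion at $0$ reads
\begin{equation*}
\overline{\pi_n}(0)=\phi_{\mathrm{std}}(0)\Bigl(1+\frac{P(0)}{\sqrt{n}}+O\!\left(\tfrac1n\right)\Bigr),
\end{equation*}
where the first Edgeworth polynomial $P$ is odd, hence $P(0)=0$, giving $\overline{\pi_n}(0)=(2\pi)^{-1/2}(1+o(1))$. Undoing the normalization, $\pi^{u}_{\mathbf{S}_1^n}(nu)=\bigl(s(t)\sqrt{n}\bigr)^{-1}\overline{\pi_n}(0)$, and combining with the tilting identity yields the claimed formula.

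The one genuinely delicate point is the uniformity: as $n\to\infty$ the parameter $u=u_n$ is allowed to move (with $|u_n|\sqrt n\to\infty$ and $|u_n|$ bounded), so $t=t_n$ and the tilted law $\pi^{u_n}$ form a triangular array rather than a fixed sequence, and one must check that the $o(1)$ remainder is uniform over the relevant range of $u$. This is handled precisely by the boundedness of $u$ together with $\sup_n q_{u,\delta}<1$ and a uniform lower bound $\inf_n s^2(t_n)>0$ (both consequences of $t_n$ staying in a compact subset of the interior of the domain of $\phi$), which make all constants in the Edgeworth bound uniform; I would point to Remark \ref{Remark Edgeworth array} and to \cite{Jensen1995} for the detailed bookkeeping rather than reproduce it. Note that the condition $|u|\sqrt n\to\infty$ is not actually needed for the local expansion itself — it is stated because that is the regime in which the lemma is \emph{used} (and in which the exponentially small prefactor $\phi^n(t)e^{-ntu}$ genuinely dominates the behaviour of $p_{\mathbf{S}_1^n/n}(u)$); the proof above is valid for any $u_n$ staying in a fixed compact subset of the open support interval.
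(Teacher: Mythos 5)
Your argument is correct and is precisely the standard saddlepoint derivation (exponential tilting to recenter at $u$, then a local limit theorem with Edgeworth correction at the mean of the tilted law, where the odd first-order polynomial vanishes); the paper itself gives no proof of this lemma, simply citing \cite{Jensen1995} and \cite{Richter1957}, whose proofs follow exactly the route you describe. Your closing observation that the expansion holds for any $u$ in a compact subset of the interior of the support, independently of $|u|\sqrt{n}\to\infty$, is also accurate.
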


Introduce%
\begin{equation*}
D:=\left[ \frac{\pi _{f}^{a}(a)}{p_{\mathbf{X}}(a)}\right] ^{n}
\end{equation*}%
and%
\begin{equation*}
N:=\left[ \frac{\pi _{f}^{m_{k}}\left( m_{k}\right) }{p_{\mathbf{X}}\left(
m_{k}\right) }\right] ^{\left( n-k\right) }
\end{equation*}%
with $m_{k}$ defined in (\ref{mif})$.$ Define $t$ by $m_{f}(t)=\sigma a+\mu .$ By (\ref{Jensen_dans_k_limite}) and Lemma \ref{LemmaJensen} it holds
\begin{equation*}
\mathfrak{p}_{n}^{f}\left( Y_{1}^{k}\right) =\sqrt{\frac{n}{n-k}}p_{\mathbf{X%
}}\left( Y_{1}^{k}\right) \frac{N}{D}\frac{s_{f}(t)}{%
s_{f}(t_{k})}\left( 1+o_{P_{\mathbf{X}}}(1)\right) .
\end{equation*}%
The approximation of $A$ is obtained through Monte Carlo simulation. Define%
\begin{equation}
A\left( Y_{1}^{k}\right) :=\frac{n-k}{n}\left( \frac{h_{\sigma a+\mu }\left(
Y_{1}^{k}\right) }{p_{\mathbf{X}}\left( Y_{1}^{k}\right) }\right) ^{3}\left( 
\frac{D}{N}\right) ^{2}\frac{s_{f}^{2}(t_{k})}{s_{f}^{2}(t)}
\label{A(l)}
\end{equation}
and simulate $L$ i.i.d. samples $Y_{1}^{k}(l)$ , each one made of $k$ i.i.d.
replications under $p_{\mathbf{X}}$; set 
\begin{equation*}
\widehat{A}:=\frac{1}{L}\sum_{l=1}^{L}A\left( Y_{1}^{k}(l)\right) .
\end{equation*}%
We use the same approximation for $B.$ Define%
\begin{equation}
B\left( Y_{1}^{k}\right) :=\sqrt{\frac{n-k}{n}}\left( \frac{h_{\sigma a+\mu
}\left( Y_{1}^{k}\right) }{p_{\mathbf{X}}\left( Y_{1}^{k}\right) }\right)
^{2}\left( \frac{D}{N}\right) \frac{s_{f}(t_{k})}{s_{f}(t)}
\label{B(l)}
\end{equation}
and 
\begin{equation*}
\widehat{B}:=\frac{1}{L}\sum_{l=1}^{L}B\left( Y_{1}^{k}(l)\right) 
\end{equation*}%
with the same $Y_{1}^{k}(l)^{\prime }s$ as above.

Set 
\begin{equation}
\overline{VRE}(k):=\widehat{A}-\left( \widehat{B}\right) ^{2}  \label{REbar}
\end{equation}
which is a fair approximation of $VRE(k).$

The curve $k\rightarrow \overline{ERE}(k)$ is a proxy for 
\begin{equation*}
ERE(k):=E_{h_{\sigma a+\mu }}\frac{\mathfrak{p}_{n}^{f}\left(
Y_{1}^{k}\right) -h_{\sigma a+\mu }\left( Y_{1}^{k}\right) }{\mathfrak{p}%
_{n}^{f}\left( Y_{1}^{k}\right) }
\end{equation*}%
and is obtained through 
\begin{equation*}
\overline{ERE}(k):=1-\widehat{B}.
\end{equation*}%
A proxy of $CI(k)$ can now be defined through 
\begin{equation}
\overline{CI}(k):=\left[ \overline{ERE}(k)-2\sqrt{\overline{VRE}(k)},%
\overline{ERE}(k)+2\sqrt{\overline{VRE}(k)}\right] .  \label{CIbarre}
\end{equation}

We now check the validity of the just above approximation, comparing $%
\overline{CI}(k)$ with $CI(k)$ on a toy case.

Consider $f(x)=x.$ The case when $p$ is a centered exponential distribution
with variance $1$ allows for an explicity evaluation of $CI(k)$ making no
use of Lemma \ref{LemmaJensen}. The conditional density $\mathfrak{p}_{n}$
is calculated analytically, the density $g_{a}$ is obtained through (\ref%
{g_s}), hence providing a benchmark for our proposal. The terms $\widehat{A}$
and $\widehat{B}$ are obtained by Monte Carlo simulation following the
algorithm presented hereunder. Tables 1,2 and 3,4 show the increase in $%
\delta $ w.r.t. $k$ in the moderate deviation range, with $a$ such that $%
P\left( \mathbf{S}_{1}^{n}>na\right) \simeq 10^{-2}.$ In Table 5,6 and
7,8, $a$ is such that $P\left( \mathbf{S}_{1}^{n}>na\right) \simeq 10^{-8}$
corresponding to a large deviation case. We have considered two cases, when $%
n=100$ and when $n=1000.$
These tables show that the approximation scheme is quite accurate, since the relative error is fairly small even when approximating event is in spaces with very high dimension. Also they show that $\bar{ERE}$ et $\bar{CI}$ provide good tools for the assessing the value of $k.$

\begin{figure}[h!t]
\begin{minipage}[b]{0.50\linewidth}
      \centering \includegraphics*[scale=0.25]{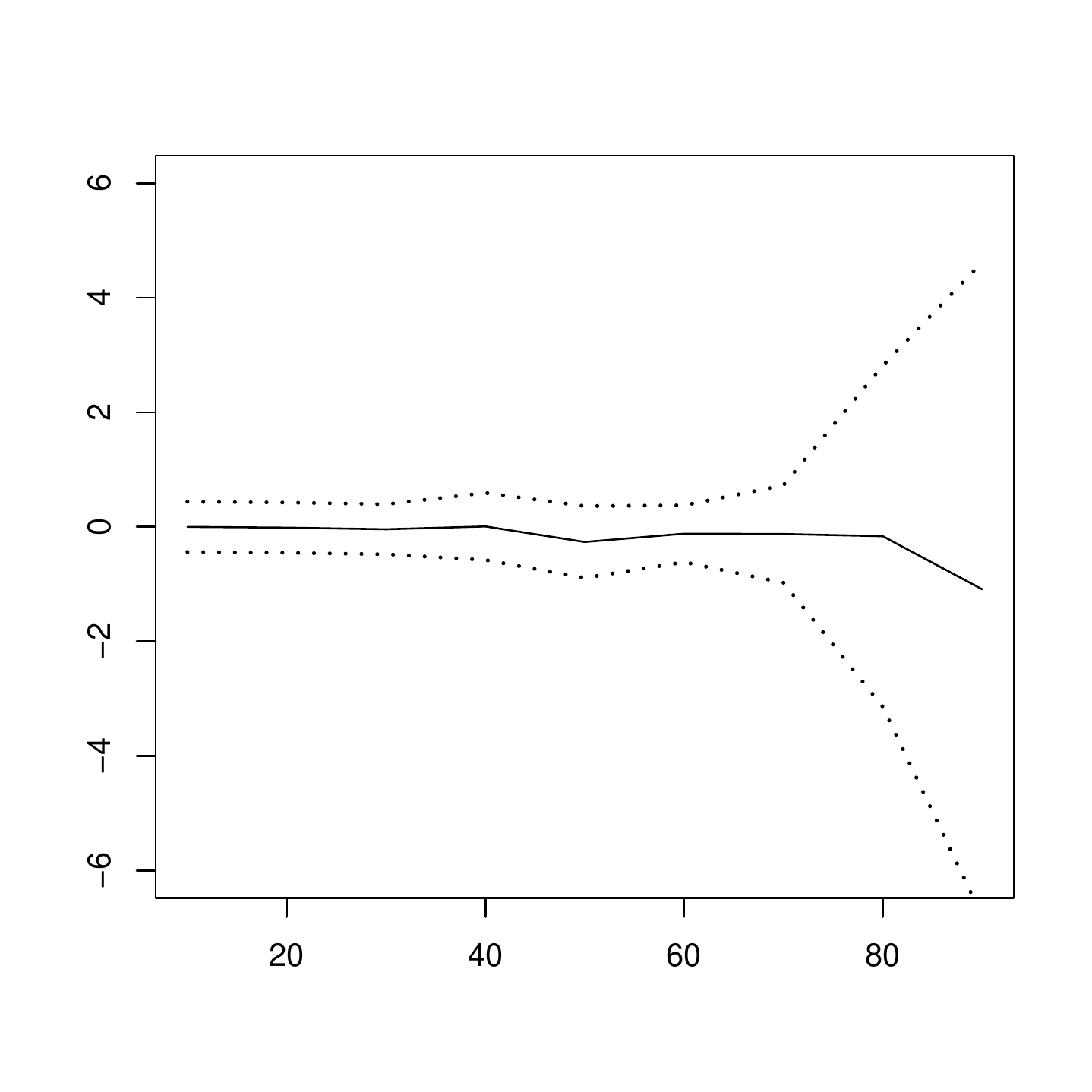}
      \caption{$
\overline{CI}(k)$ for n=100\\ and $P\left(  \mathbf{S}_{1}^{n}>na\right)  \simeq10^{-2}.$}
   \end{minipage}\hfill 
\begin{minipage}[b]{0.50\linewidth}   
      \centering \includegraphics*[scale=0.25]{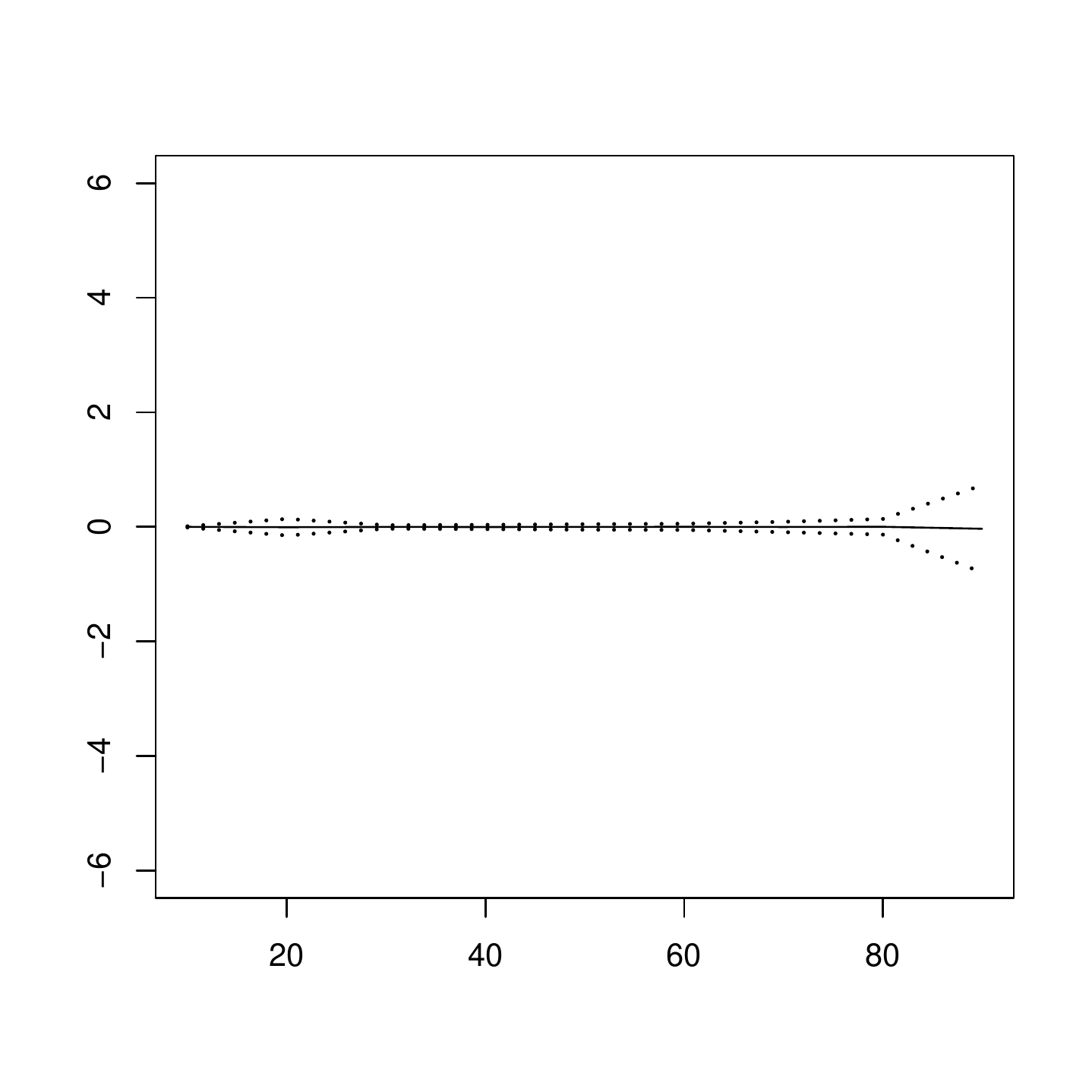}
      \caption{$CI(k)$ for n=100\\ and $P\left(  \mathbf{S}_{1}^{n}>na\right)  \simeq10^{-2}.$}
   \end{minipage}
\end{figure}

\newpage

\begin{figure}[h!t]
\begin{minipage}[b]{0.50\linewidth}
      \centering \includegraphics*[scale=0.25]{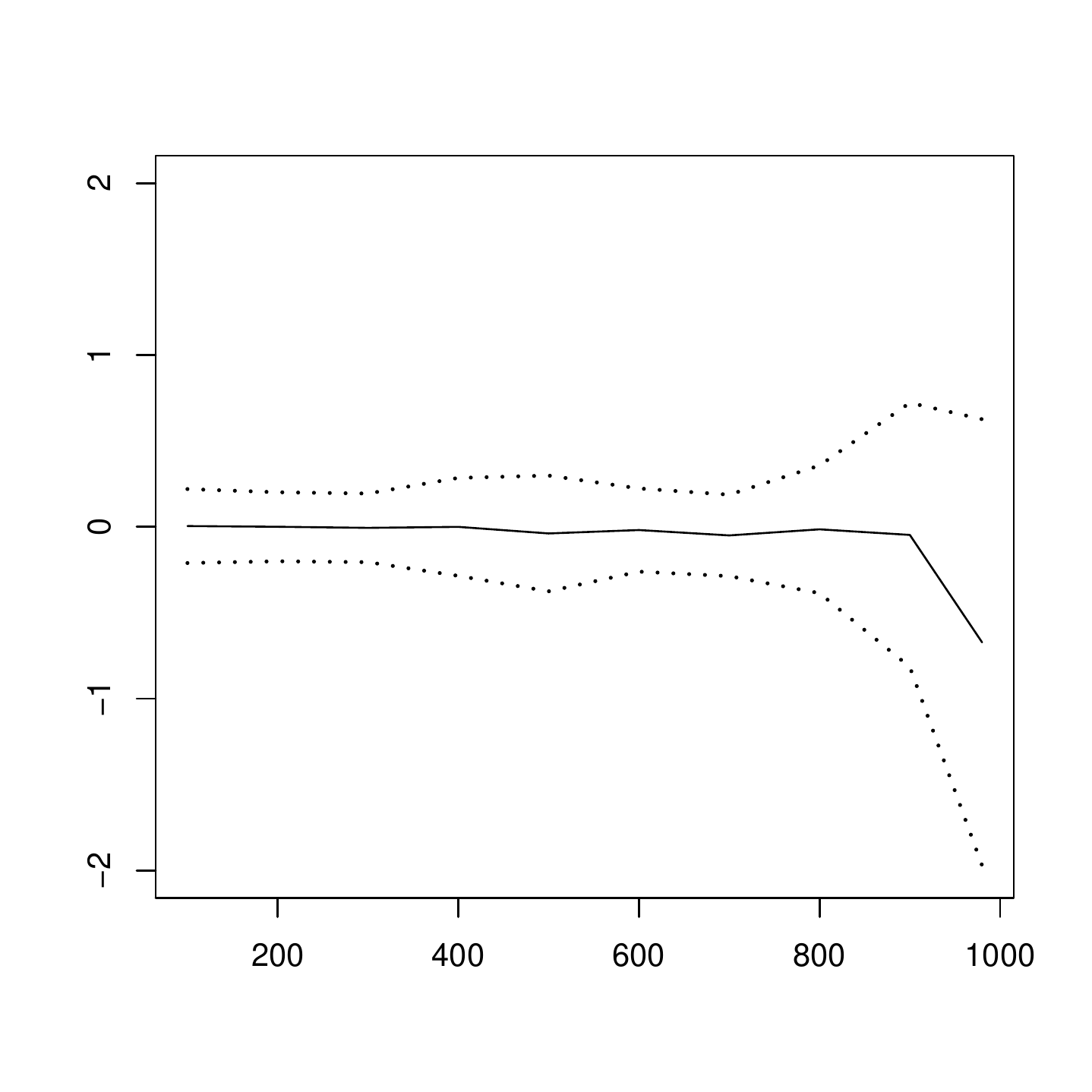}
      \caption{$
\overline{CI}(k)$ for n=1000\\ and $P\left(  \mathbf{S}_{1}^{n}>na\right)  \simeq10^{-2}.$}
   \end{minipage}\hfill 
\begin{minipage}[b]{0.50\linewidth}   
      \centering \includegraphics*[scale=0.25]{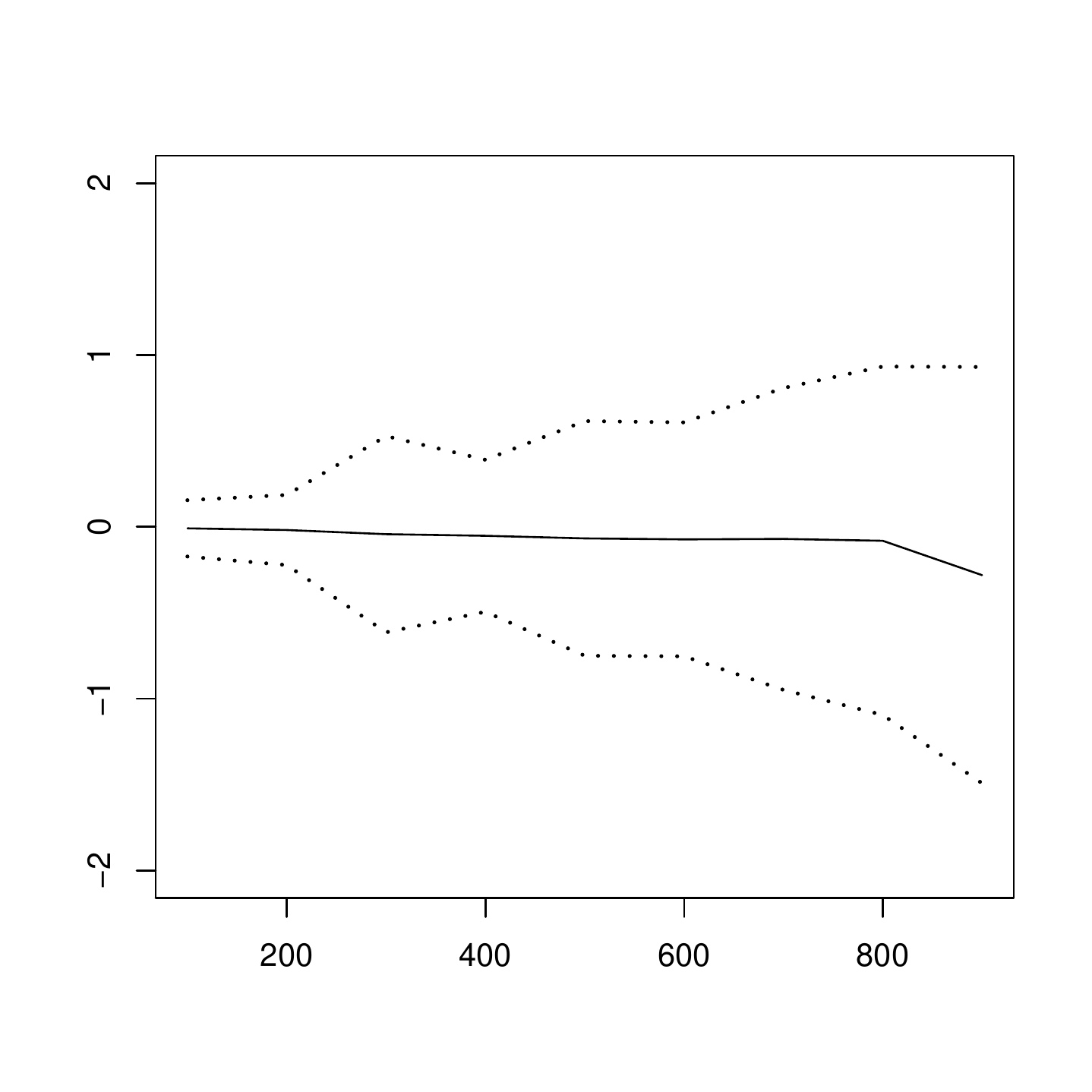}
      \caption{$CI(k)$ for n=1000\\ and $P\left(  \mathbf{S}_{1}^{n}>na\right)  \simeq10^{-2}.$}
   \end{minipage}
\end{figure}

\begin{figure}[h!t]
\begin{minipage}[b]{0.50\linewidth}
      \centering \includegraphics*[scale=0.25]{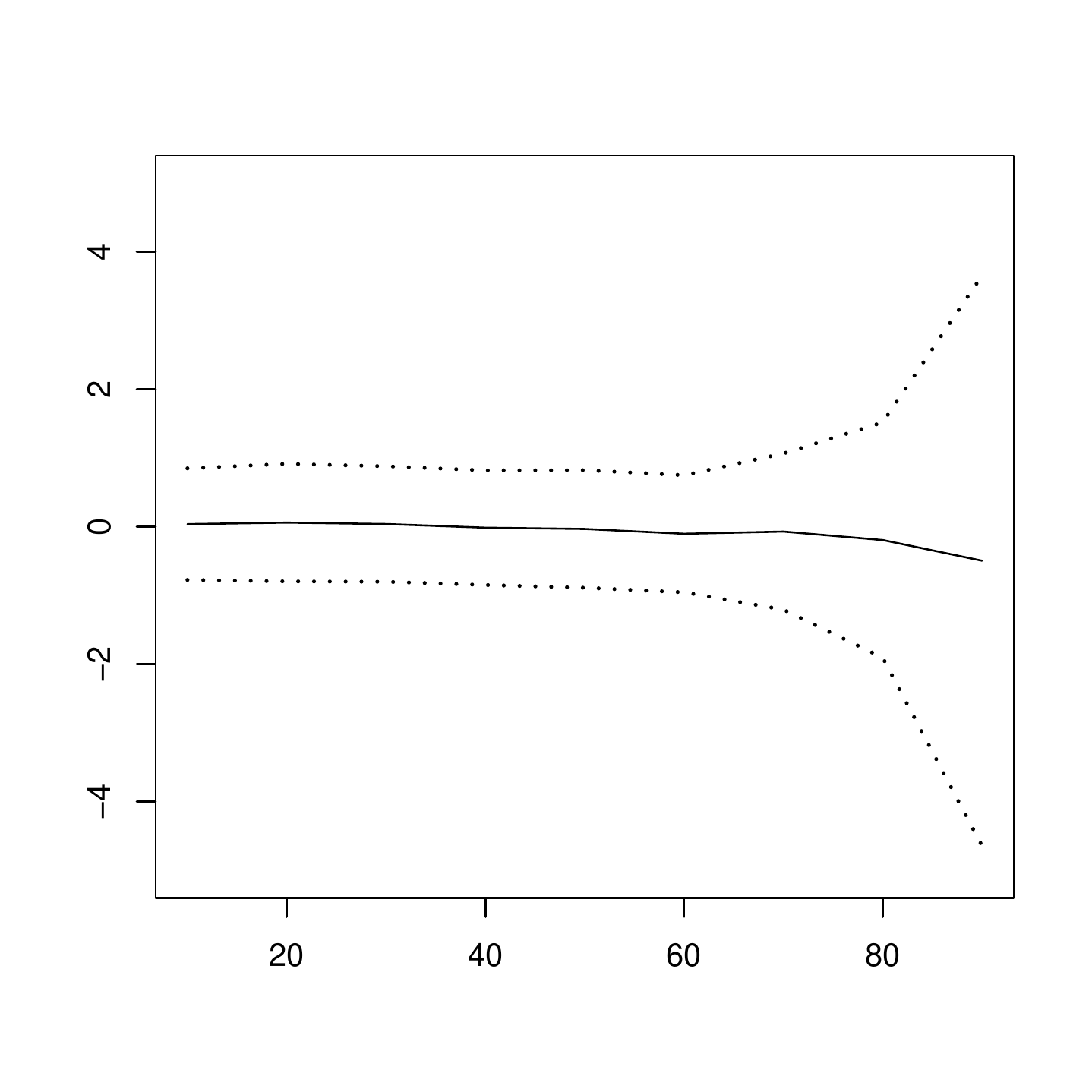}
      \caption{$
\overline{CI}(k)$ for n=100\\ and $P\left(  \mathbf{S}_{1}^{n}>na\right)  \simeq10^{-8}.$}
   \end{minipage}\hfill 
\begin{minipage}[b]{0.50\linewidth}   
      \centering \includegraphics*[scale=0.25]{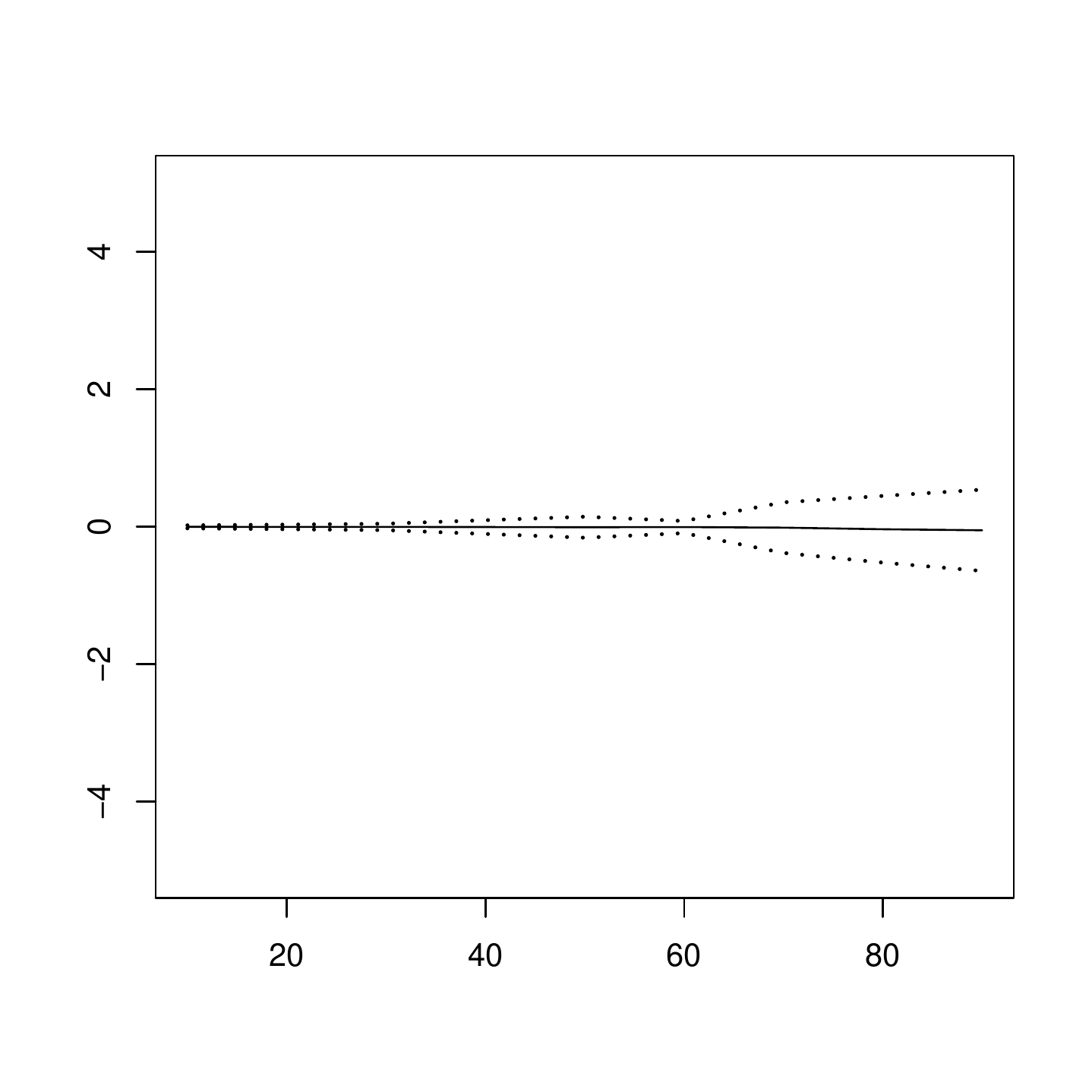}
      \caption{$CI(k)$ for n=100\\ and $P\left(  \mathbf{S}_{1}^{n}>na\right)  \simeq10^{-8}.$}
   \end{minipage}
\end{figure}

\begin{figure}[h!t]
\begin{minipage}[b]{0.50\linewidth}
      \centering \includegraphics*[scale=0.25]{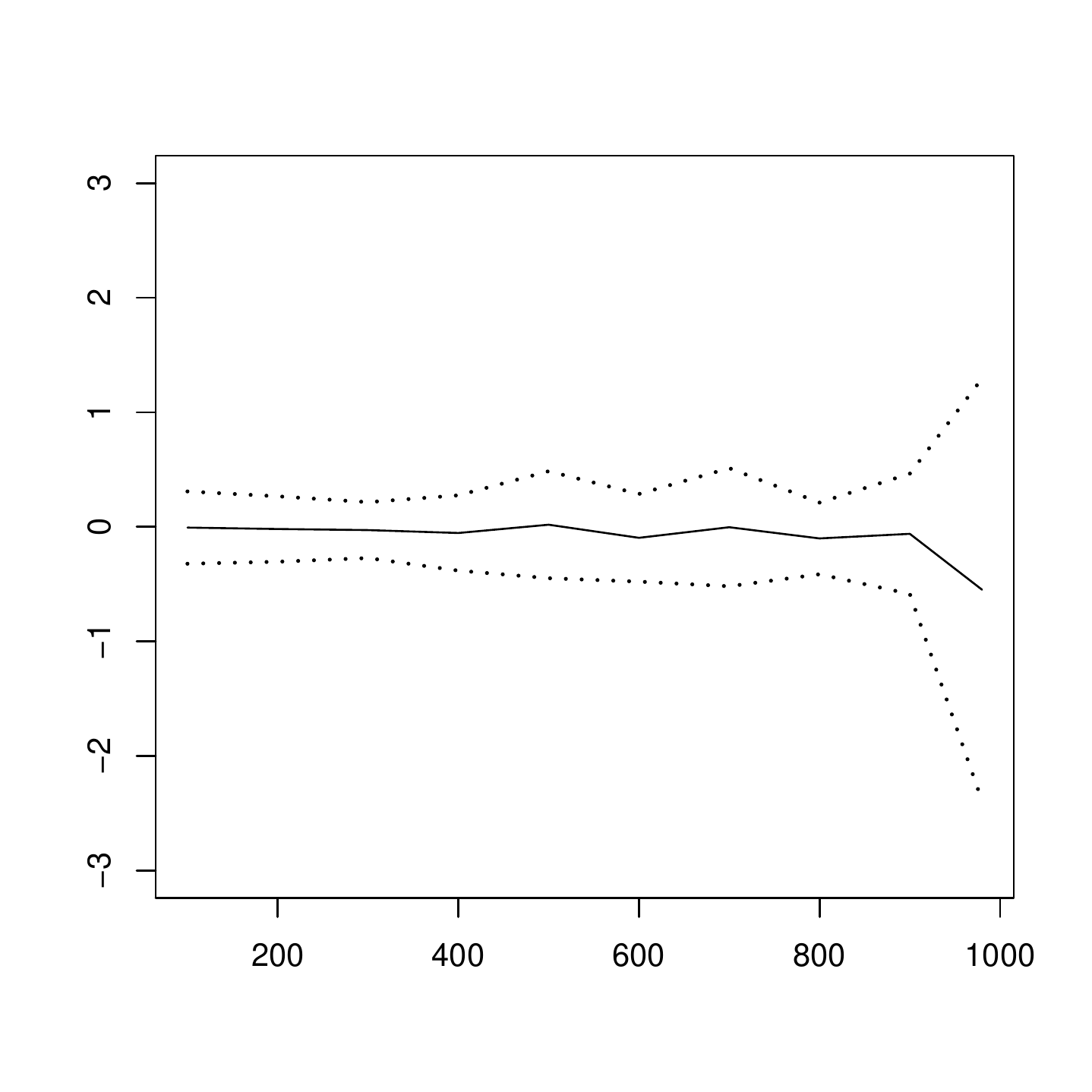}
      \caption{$
\overline{CI}(k)$ for n=1000\\ and $P\left(  \mathbf{S}_{1}^{n}>na\right)  \simeq10^{-8}.$}
   \end{minipage}\hfill 
\begin{minipage}[b]{0.50\linewidth}   
      \centering \includegraphics*[scale=0.25]{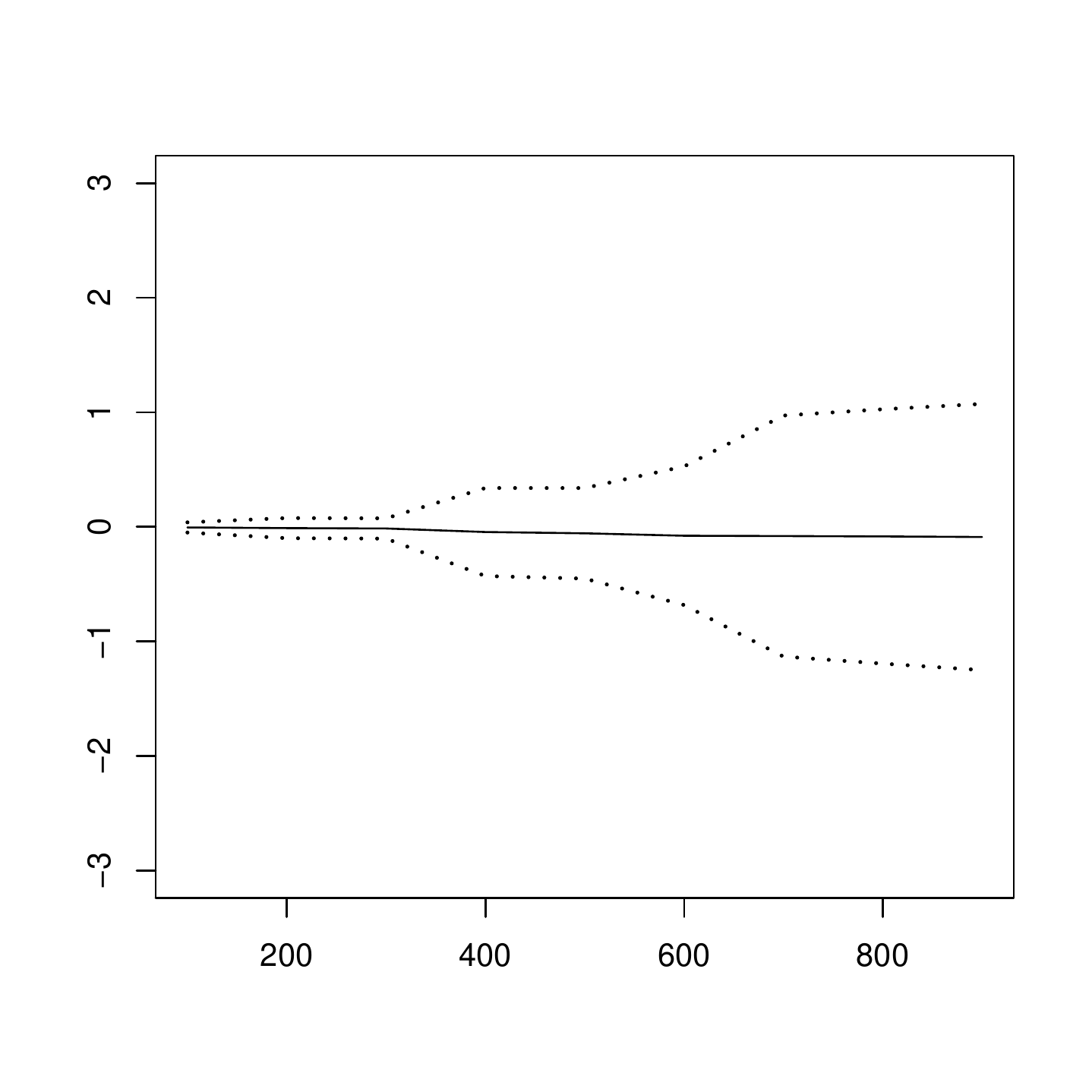}
      \caption{$CI(k)$ for n=1000\\ and $P\left(  \mathbf{S}_{1}^{n}>na\right)  \simeq10^{-8}.$}
   \end{minipage}
\end{figure}

\newpage

We present a series of algorithms which produces the curve $k\rightarrow 
\overline{RE}(k)$ in the case when $f\left( \mathbf{X}\right) $ has
expectation $\mu $ and variance $\sigma ^{2}.$

\begin{center}
\begin{tabular}{|l|l|}
\hline
\textsf{Algorithm 1 Evaluates the function }$h_{\sigma a+\mu }$ &  \\ \hline
\textbf{INPUT} &  \\ 
\qquad \qquad \qquad vector $x_{1}^{k}$, integer $n$, density $p_{\mathbf{X}%
} $, level $a$ &  \\ 
\textbf{OUTPUT} &  \\ 
\qquad \qquad \qquad \textbf{\ }$h_{\sigma a+\mu }\left( x_{1}^{k}\right) $
&  \\ 
\textbf{INITIALIZATION} &  \\ 
$\qquad \qquad \qquad t_{0}:=m_{f}^{-1}\left( \sigma a+\mu \right) $
&  \\ 
$\qquad \qquad \qquad h_{0}(\left. x_{1}\right\vert x_{1}^{0}):=\pi
_{f}^{\sigma a+\mu }(x_{1})$ &  \\ 
$\qquad \qquad \qquad \Sigma _{1}^{1}:=x_{1}$ &  \\ 
\textbf{PROCEDURE} &  \\ 
\qquad \qquad \qquad \textbf{For }$i$ from $1$ to $k-1$ &  \\ 
\qquad \qquad \qquad \qquad $m_{i}:=$(\ref{mif}) &  \\ 
\qquad \qquad \qquad \qquad $t_{i}:=m_{f}^{-1}(m_{i})$ & 
\\ 
\qquad \qquad \qquad \qquad $\alpha :=$(\ref{a pour f(x)}) &  \\ 
\qquad \qquad \qquad \qquad $\beta :=$(\ref{b pour f(x)}) &  \\ 
\qquad \qquad \qquad \qquad Calculate $C_{i}$ in (\ref{gif}) through
MonteCarlo &  \\ 
\qquad \qquad \qquad \qquad $h_{i}(\left. x_{i+1}\right\vert x_{1}^{i}):=$(%
\ref{gif}) &  \\ 
\qquad \qquad \qquad \textbf{endFor} &  \\ 
\qquad \qquad \qquad Compute &  \\ 
\qquad \qquad \qquad \qquad \textbf{\ }$h_{\sigma a+\mu }\left(
x_{1}^{k}\right) :=$(\ref{gasigmamu}) &  \\ 
\qquad \qquad \qquad Return $h_{\sigma a+\mu }\left( x_{1}^{k}\right) $ & 
\\ \hline
\end{tabular}
\end{center}

\begin{remark}
Solving $t_{i}:=m_{f}^{-1}(m_{i})$ might be difficult, even through a
Newton Raphson technique and time consuming in simple cases. It may happen
that the reciprocal function of $m_{f}$ is at hand, as is assumed in Dupuis
and Wang \cite{DupuisWang2004}, but even in such current situation as the
Weibull distribution and $f(x)=x$, such is not the case. An alternative
computation is presented in Algorithm 1', following an expansion in $%
m_{f}(t_{i})$, which is a good update since $s_{f}(t_{i-1})$
is stable around $varf\left( \mathbf{X}\right) $ in the case when $a$ tends
to $Ef\left( \mathbf{X}\right) $ or to $s_{f}^{2}(t)$ when $a$ tends to $0$
or for fixed $a$, as follows from a variant of Lemma \ref{LemmaMaxm_in}$.$
\end{remark}

\begin{center}
\begin{tabular}{|l|l|}
\hline
\textsf{Algorithm 1'} &  \\ \hline
\ Similar to Algorithm 1 with $t_{i}:=m_{f}^{-1}(m_{i})$ substituted
by &  \\ 
\qquad \qquad \qquad \qquad $t_{i}:=t_{i-1}+\frac{m_{f}\left(t_{i-1}\right)-x_{i}}{\left( n-i\right)
s_{f}^{2}\left(t_{i-1}\right) }.$ &  \\ \hline
\end{tabular}

\begin{tabular}{|l|l|}
\hline
\textsf{Algorithm 2 : Calculates }$k_{\delta }$ &  \\ \hline
\textbf{INPUT \ } &  \\ 
\qquad \qquad \qquad\ density $p_{\mathbf{X}}$, level $a$, efficiency $%
\delta ,$ integer $n,$ integer $L$ &  \\ 
\textbf{OUTPUT} &  \\ 
\qquad \qquad \qquad $k_{\delta }$ &  \\ 
\textbf{INITIALIZE} &  \\ 
\qquad \qquad \qquad $k=1$ &  \\ 
\textbf{PROCEDURE} &  \\ 
\qquad \qquad \qquad \textbf{Do} &  \\ 
\qquad \qquad \qquad \qquad \textbf{For }$l$ from $1$ to $L$ &  \\ 
\qquad \qquad \qquad \qquad \qquad Simulate $Y_{1}^{k}(l)$ i.i.d. with
density $p_{\mathbf{X}}$ &  \\ 
\qquad \qquad \qquad \qquad $\qquad A\left( Y_{1}^{k}(l)\right) :=$(\ref%
{A(l)}) using Algorithm 1 &  \\ 
\qquad \qquad \qquad \qquad $\qquad B\left( Y_{1}^{k}(l)\right) :=$(\ref%
{B(l)}) using Algorithm 1 &  \\ 
\qquad \qquad \qquad \qquad \textbf{endFor} &  \\ 
\qquad \qquad \qquad \qquad $\qquad $Calculate $\overline{CI}(k):=$(\ref%
{CIbarre}) &  \\ 
\qquad \qquad \qquad \qquad \textbf{\ \ \ \ \ \ \ }$k:=k+1$ &  \\ 
\qquad \qquad \qquad \textbf{While }$\delta \notin \overline{CI}(k)$ &  \\ 
\qquad \qquad \qquad \textbf{endDo} &  \\ 
\qquad \qquad \qquad Return $k_{\delta }$\bigskip $:=k$ &  \\ \hline
\end{tabular}
\end{center}

\section{Simulation of typical paths of a random walk under a point
conditioning}

By Theorem \ref{Prop approx sous f(x)=a} (ii), $h_{\sigma a+\mu }$ and the
density of $\mathbf{X}_{1}^{k}$ under $\left( \mathbf{U}_{1}^{n}=n\left(
\sigma a+\mu \right) \right) $ get closer and closer on a family of subsets
of $\mathbb{R}^{k}$ which bear the typical paths of the random walk under
the conditioning $(\mathbf{U}_{1}^{n}=n\left( \sigma a+\mu \right) )$ with
probability going to $1$ as $n$ increases$.$ By Lemma \ref{Lemma commute
from p_n to g_n} large sets under $\mathfrak{P}_{n}^{f}$ are also large sets
under $H_{\sigma a+\mu }$. It follows that longs runs of typical paths under 
$\mathfrak{p}_{n}^{f}$ defined in (\ref{p_n^f}) can be simulated as typical
paths under $H_{\sigma a+\mu }$ at least for large $n.$

\begin{center}
\begin{tabular}{|l|l|}
\hline
\textsf{Algorithm 3 : Simulates a sample }$Y_{1}^{k}$\textsf{\ with density }
\textbf{\ }$h_{\sigma a+\mu }$ &  \\ \hline
\textbf{INPUT} \qquad \qquad \qquad integer $n$, density $p_{\mathbf{X}}$,
level $a$ , accuracy $\delta $ &  \\ 
\textbf{OUTPUT} &  \\ 
\qquad \qquad \qquad Vector\textbf{\ }$Y_{1}^{k}$ &  \\ 
\textbf{INITIALIZATION} &  \\ 
\qquad \qquad \qquad Set $k$:$=k_{\delta }$ with Algorithm 2 &  \\ 
$\qquad \qquad \qquad t_{0}:=m_{f}^{-1}\left( \sigma a+\mu \right) $
&  \\ 
\textbf{PROCEDURE} &  \\ 
\qquad \qquad \qquad Simulate $Y_{1}$ with density $\pi _{f}^{\sigma a+\mu }$
&  \\ 
\qquad \qquad \qquad $\Sigma _{1}^{1}:=Y_{1}$ &  \\ 
\qquad \qquad \qquad \textbf{For }$i$ from $1$ to $k-1$ &  \\ 
\qquad \qquad \qquad \qquad $m_{i}:=$(\ref{mif}) &  \\ 
\qquad \qquad \qquad \qquad $t_{i}:=m_{f}^{-1}(m_{i})$ &  \\ 
\qquad \qquad \qquad \qquad $\alpha :=$(\ref{a pour f(x)}) &  \\ 
\qquad \qquad \qquad \qquad $\beta :=$(\ref{b pour f(x)}) &  \\ 
\qquad \qquad \qquad \qquad Simulate $Y_{i+1}$ with density $h_{i}(\left.
y_{i+1}\right\vert y_{1}^{i})$ &  \\ 
\qquad \qquad \qquad \qquad $\Sigma _{1}^{i+1}:=\Sigma _{1}^{i}+Y_{i+1}$ & 
\\ 
\qquad \qquad \qquad \textbf{endFor} &  \\ 
\qquad \qquad \qquad Return $Y_{1}^{k}$ &  \\ \hline
\end{tabular}

\begin{tabular}{|l|l|}
\hline
\textsf{Algorithm 3'} &  \\ \hline
Similar to Algorithm 3 with $t_{i}:=m_{\mathbf{X}}^{-1}(m_{i})$ substituted by &  \\ 
\qquad \qquad \qquad \qquad $t_{i}:=t_{i-1}+\frac{m_{f}\left(
t_{i-1}\right)-x_{i}}{\left( n-i\right)s_{f}^{2}\left( t_{i-1}\right) }$ &  \\ \hline
\end{tabular}
\end{center}

\begin{remark}
The r.v. $Y_{1}$ can be obtained through Metropolis-Hastings algorithm; see
also \cite{BarbeBroniatowski1999} which uses a truncated approximation.
\end{remark}

The following algorithm provides a simple acceptance/rejection simulation
tool for $Y_{i+1}$ with density $h_{i}(\left. y_{i+1}\right\vert y_{1}^{i});$
it does not require any estimation of the normalizing factor.
Metropolis-Hastings may also be useful in complex cases.

Denote $\mathfrak{N}$ the c.d.f. of a normal variate with parameter $\left(
\mu,\sigma^{2}\right) $ ,and $\mathfrak{N}^{-1}$ its inverse.

\begin{center}
\begin{tabular}{|l|l|}
\hline
\textsf{Algorithm 4 : Simulates }$Y$\textsf{\ with density proportional to }$%
p(x)\mathfrak{n}\left( \mu,\sigma^{2},x\right)$ &  \\ \hline
\textbf{INPUT} &  \\ 
\qquad\qquad\qquad\ density $p$ &  \\ 
\textbf{OUTPUT} &  \\ 
\qquad\qquad\qquad$Y$ &  \\ 
\textbf{INITIALIZATION} &  \\ 
\qquad\qquad\qquad Select a density $f$ on $\left[ 0,1\right] $ and a
positive constant $K$ such that &  \\ 
$\qquad\qquad\qquad p\left( \mathfrak{N}^{-1}(x)\right) \leq Kf(x)$ for all $%
x$ in $\left[ 0,1\right]$ &  \\ 
\textbf{PROCEDURE} &  \\ 
\qquad\qquad\qquad\textbf{Do} &  \\ 
\qquad\qquad\qquad\qquad Simulate $X$ with density $f$ &  \\ 
\qquad\qquad\qquad\qquad Simulate $U$ uniform on $\left[ 0,1\right]$
independent of $X$ &  \\ 
\qquad\qquad\qquad\qquad$Z:=KUf(X)$ &  \\ 
\qquad\qquad\qquad\textbf{While} $Z<$ $p\left( \mathfrak{N}^{-1}(X)\right)$
&  \\ 
\qquad\qquad\qquad\textbf{endDo} &  \\ 
\qquad\qquad\qquad Return $Y:=\mathfrak{N}^{-1}(X)$ &  \\ \hline
\end{tabular}
\end{center}

Tables 9,10,11 and 12 present a number of simulations of random walks
conditioned on their sum with $n=1000$ when $f(x)=x.$ In the gaussian case,
when the aproximating scheme is known to be optimal up to $k=n-1$, the
simulation is performed with $k=999$ and two cases are considered: the
moderate deviation case is when $P(\mathbf{S}_{1}^{n}>na)=10^{-2}$ (Table 9)
and the large deviation pertains to $P(\mathbf{S}_{1}^{n}>na)=10^{-8}$
(Table 10). The centered exponential case with $n=1000$ and $k=900$ is
presented in Tables 11 and 12, under the same events. In order to check the
accuracy of the approximation, Tables 13,14 (normal case, n=1000, k=999) and
Tables 15,16 (centered exponential case, n=1000, k=900) present the
histograms of the simulated $\mathbf{X}_{i}^{\prime}s$ together with the
tilted densities at point $a$ which are known to be the limit density of $%
\mathbf{X}_{1}$ conditioned on $\mathcal{E}_{n}$ in the large deviation
case, and to be equivalent to the same density in the moderate deviation
case, as can be deduced from \cite{ERmakov2006}.$\ $The tilted density in
the gaussian case is the normal with mean $a$ and variance $1$; in the
centered exponential case the tilted density is an exponential density on $%
\left( -1,\infty \right) $ with parameter $1/(1+a).$

\newpage

\begin{figure}[h!t]
\begin{minipage}[b]{0.50\linewidth}
      \centering \includegraphics*[scale=0.22]{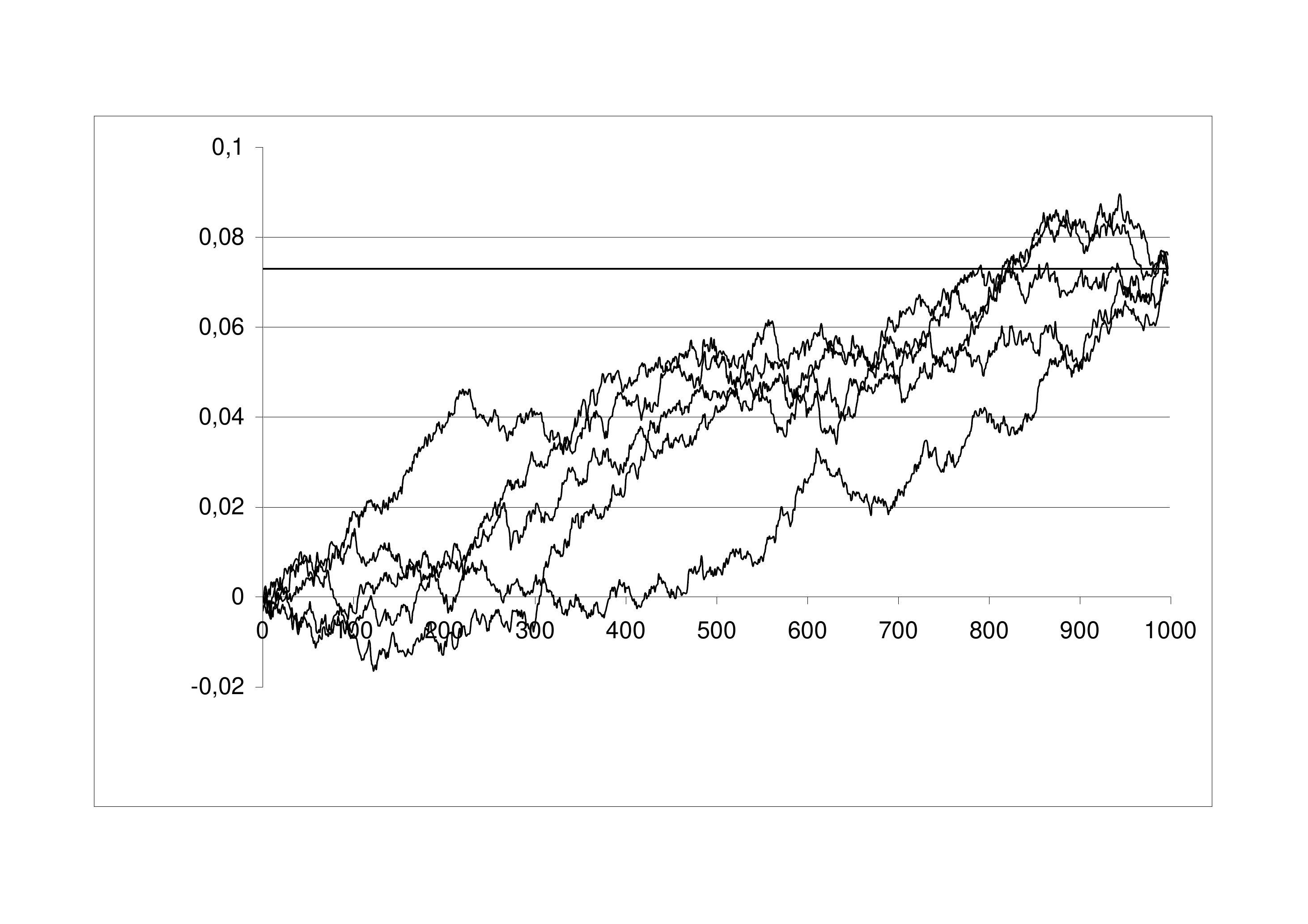}
      \caption{Trajectories in the normal\\ case for $P(\mathbf{S}_{1}^{n}>na)=10^{-2}$}
   \end{minipage}\hfill 
\begin{minipage}[b]{0.50\linewidth}   
      \centering \includegraphics*[scale=0.22]{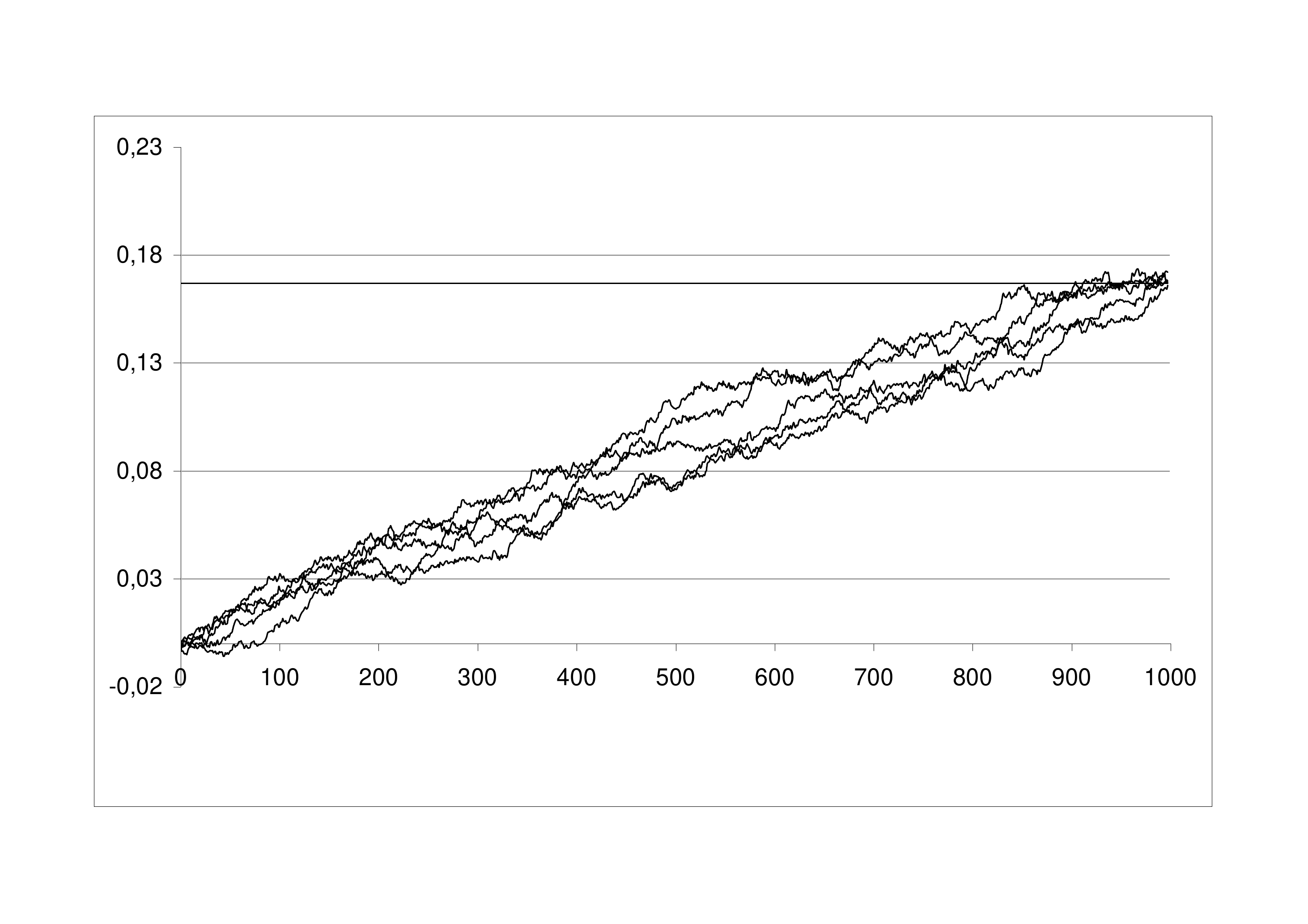}
      \caption{Trajectories in the normal\\ case for $P(\mathbf{S}_{1}^{n}>na)=10^{-8}$}
   \end{minipage}
\end{figure}

\begin{figure}[h!t]
\begin{minipage}[b]{0.50\linewidth}
      \centering \includegraphics*[scale=0.22]{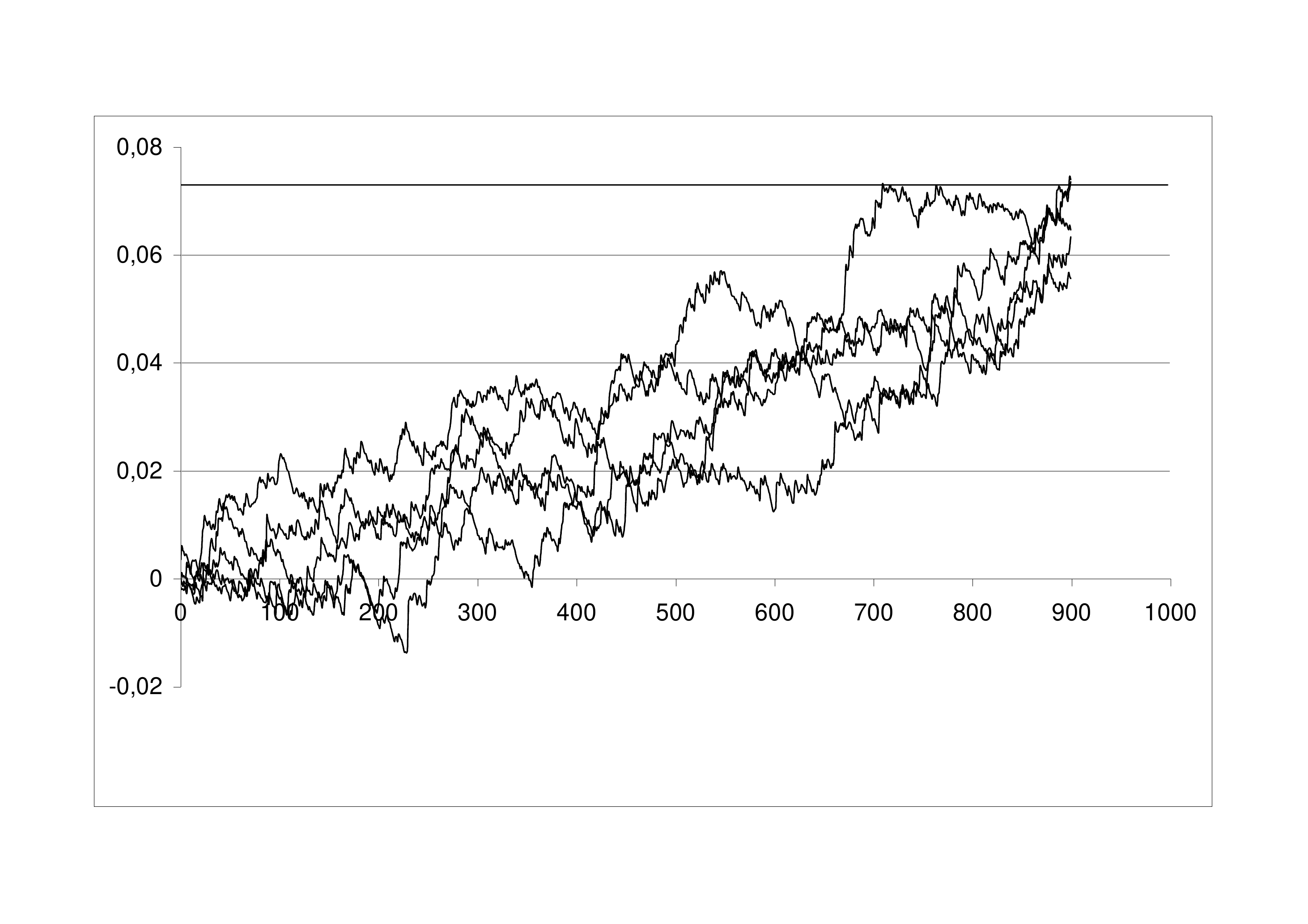}
      \caption{Trajectories in the expo-\\nential case for $P(\mathbf{S}_{1}^{n}>na)=10^{-2}$}
   \end{minipage}\hfill 
\begin{minipage}[b]{0.50\linewidth}   
      \centering \includegraphics*[scale=0.22]{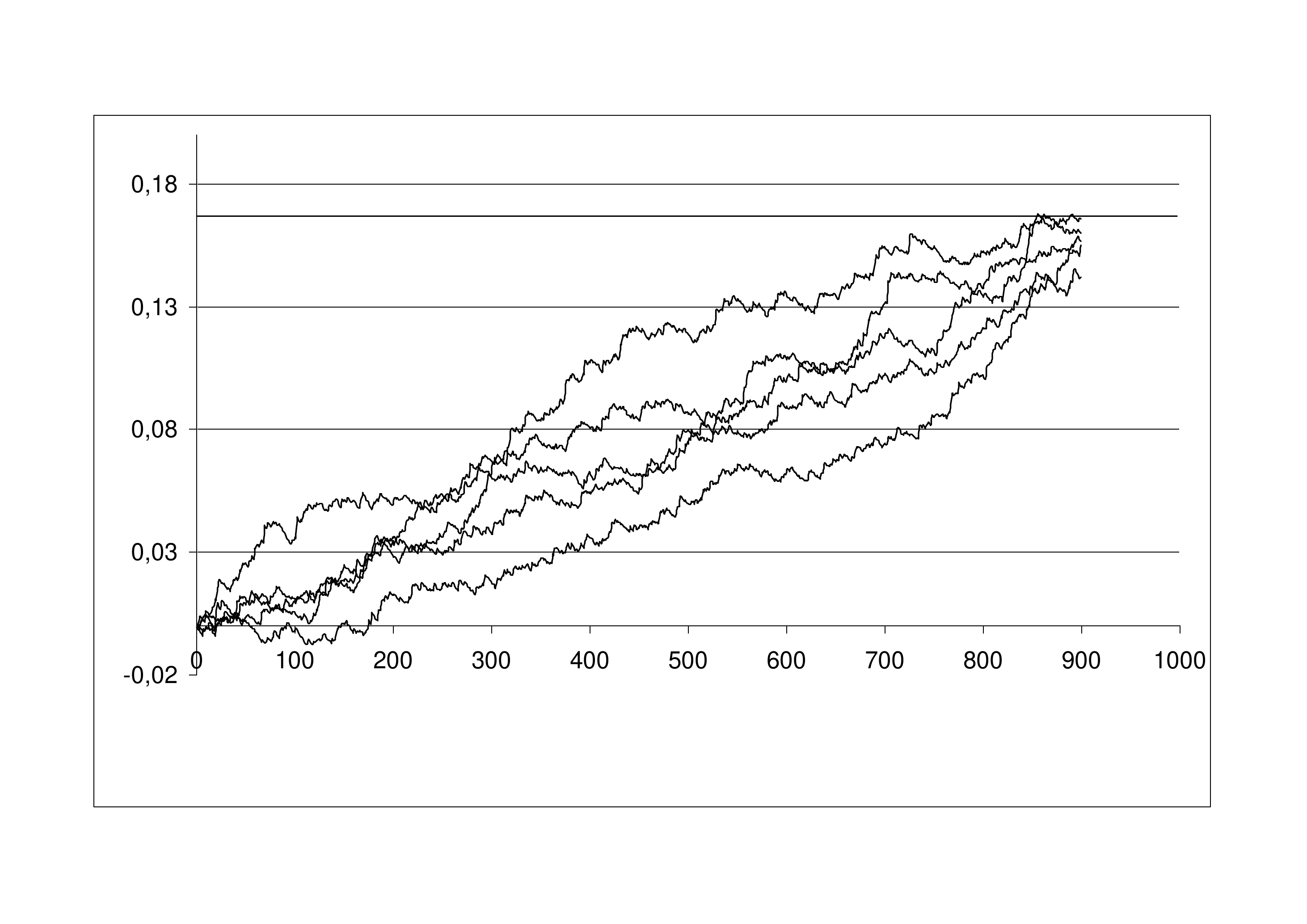}
      \caption{Trajectories in the expo-\\nential case for $P(\mathbf{S}_{1}^{n}>na)=10^{-8}$}
   \end{minipage}
\end{figure}

\newpage

\begin{figure}[h!t]
\begin{minipage}[b]{0.50\linewidth}
      \centering \includegraphics*[viewport=30 30 500 457,scale=0.3]{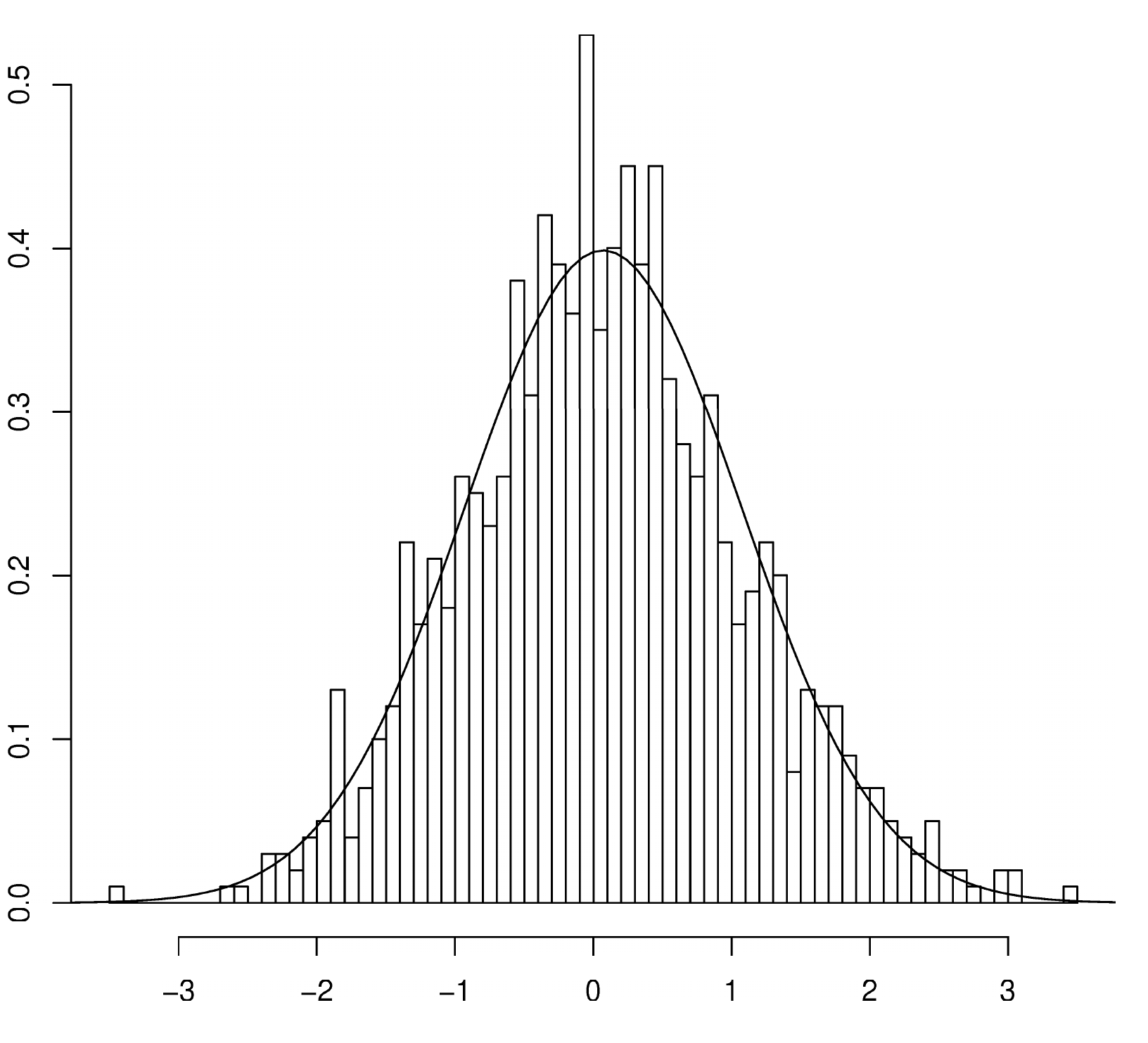}
      \caption{Distribution of $\textbf{X}_{i}'s$ in the\\ normal case for $P(\mathbf{S}_{1}^{n}>na)=10^{-2}$}
   \end{minipage}\hfill 
\begin{minipage}[b]{0.50\linewidth}   
      \centering \includegraphics*[viewport=65 210 510 620,scale=0.3]{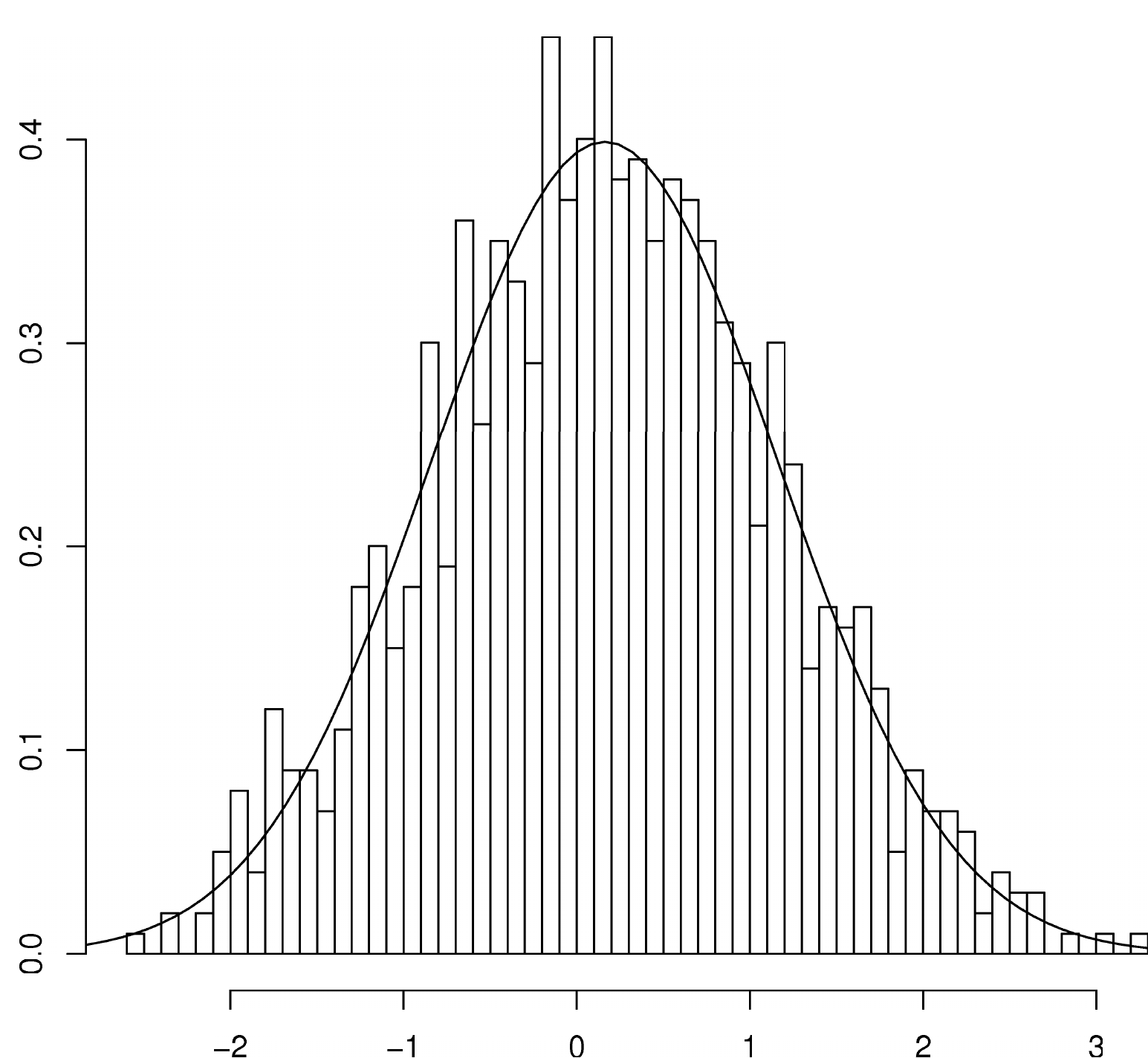}
      \caption{Distribution of $\textbf{X}_{i}'s$ in the\\ normal case for $P(\mathbf{S}_{1}^{n}>na)=10^{-8}$}
   \end{minipage}
\end{figure}

\begin{figure}[h!t]
\begin{minipage}[b]{0.50\linewidth}
      \centering \includegraphics*[viewport=65 210 510 620,scale=0.3]{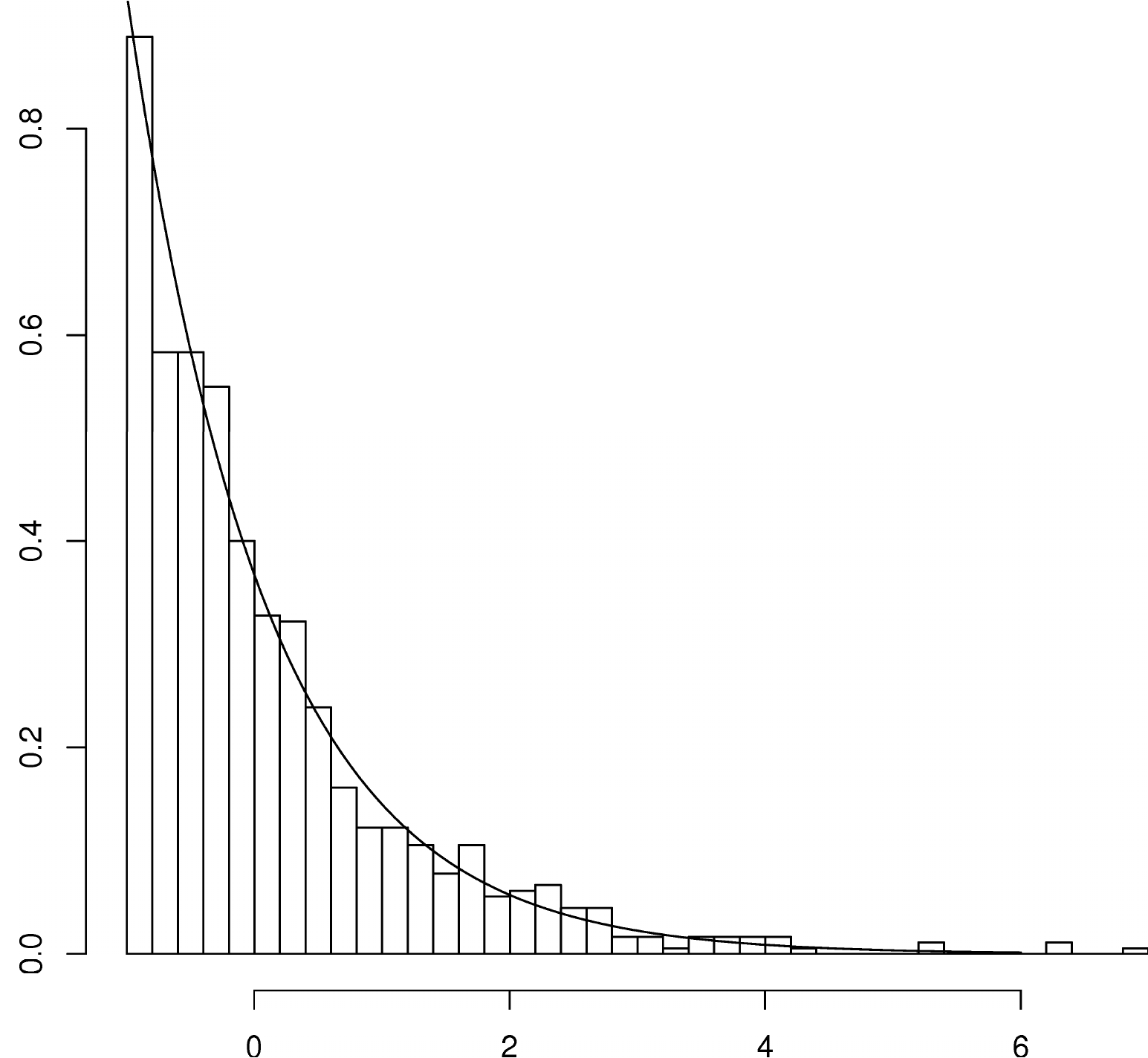}
      \caption{Distribution of $\textbf{X}_{i}'s$ in the\\ exponential case for\\ $P(\mathbf{S}_{1}^{n}>na)=10^{-2}$}
   \end{minipage}\hfill 
\begin{minipage}[b]{0.50\linewidth}   
      \centering \includegraphics*[viewport=65 210 510 620,scale=0.3]{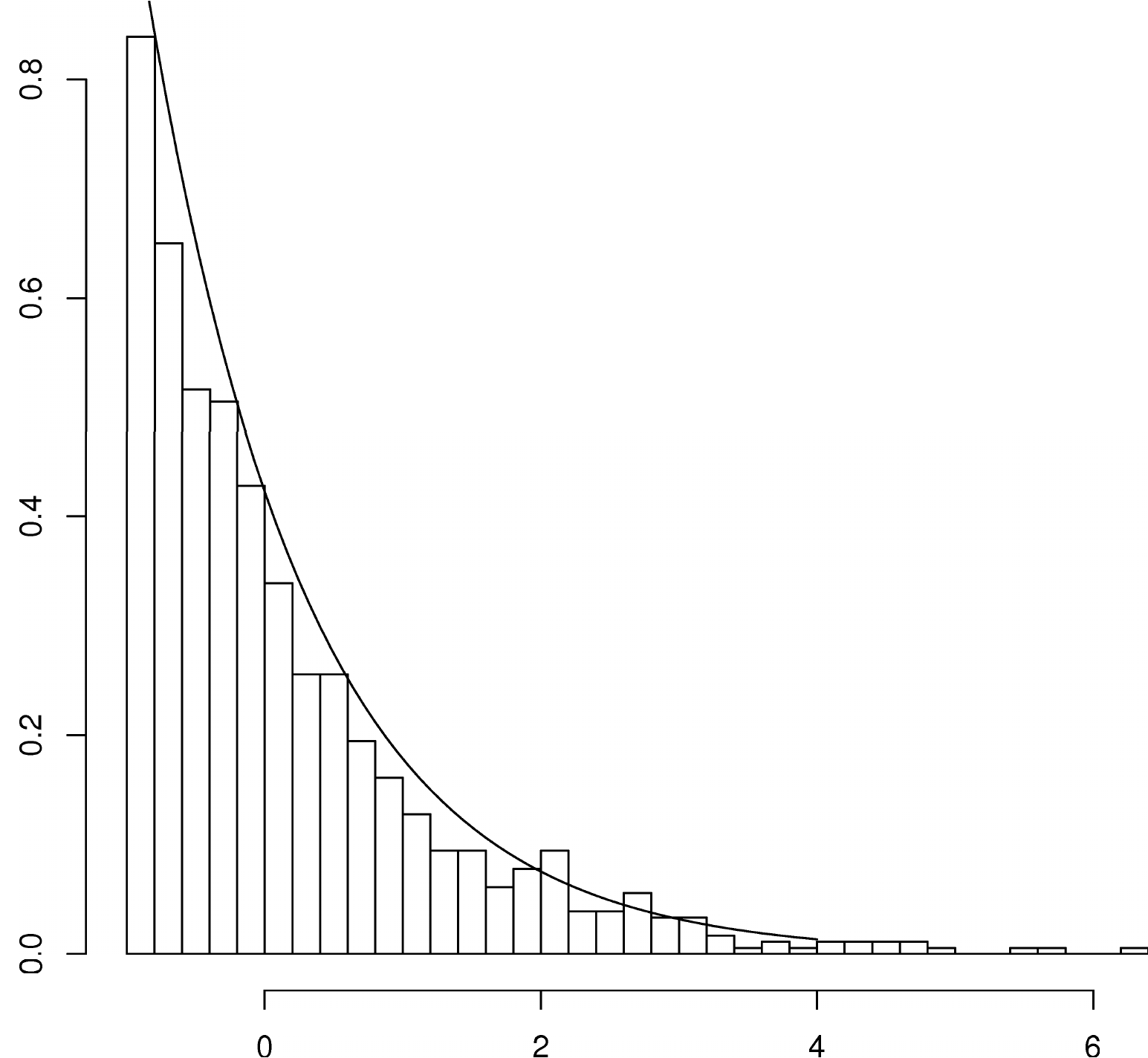}
      \caption{Distribution of $\textbf{X}_{i}'s$ in the\\ exponential case for\\ $P(\mathbf{S}_{1}^{n}>na)=10^{-8}$}
   \end{minipage}
\end{figure}

\newpage

Consider now the case when $f(x)=x^{2}.$ Table 1è presents the case when $%
\mathbf{X}$ is $N(0,1)$, $n=1000,k=800$, $P\left( \mathbf{U}_{1}^{n}=n\left(
a\sqrt{2}+1\right)\right) \simeq 10^{-2}.$ We present the histograms of the $%
X_{i}^{\prime }s$ together with the graph of the corresponding titlted
density; when $\mathbf{X}$ is $N(0,1)$ then $\mathbf{X}^{2}$ is $\chi ^{2}.$
It is well known that when $a$ is fixed larger than $1$ then the limit
distribution of $\mathbf{X}_{1}$ conditioned on $\left( \mathbf{U}%
_{1}^{n}=n\left( a\sqrt{2}+1\right) \right) $ tends to $N\left( 0,1+a\sqrt{2}%
\right) $ which is the Kullback-Leibler projection of $N(0,1)$ on the set of
all probability measures $Q$ on $\mathbb{R}$ with $\int x^{2}dQ(x)=a\sqrt{2}%
+1.$ Now this distribution is precisely $h_{0}(\left. y_{1}\right\vert
y_{0}) $ defined hereabove. Also consider (\ref{gif}); expansion using the
definitions (\ref{a pour f(x)}) and (\ref{b pour f(x)}) prove that as $%
n\rightarrow \infty $ the dominating term in $h_{i}(\left.
y_{i+1}\right\vert y_{1}^{i})$ is precisely $N\left( 0,1+a\sqrt{2}\right) ,$
and the terms including $y_{i+1}^{4}$ in the exponential stemming from $%
\mathfrak{n}\left( \alpha \beta +\left(\sigma a+\mu\right),\alpha ,f(y_{i+1}) \right) $ are of order $O\left( 1/\left( n-i\right) \right) $; the
terms depending on $y_{1}^{i}$ are of smaller order$.$ The fit which is
observed in Table 17 is in concordance with the above statement in the LDP
range (fixed $a$), and with the MDP approximation following Ermakov; see 
\cite{ERmakov2006} .

\begin{figure}[h!t]
\begin{minipage}[b]{0.50\linewidth}
      \centering \includegraphics*[viewport=65 210 510 620,scale=0.3]{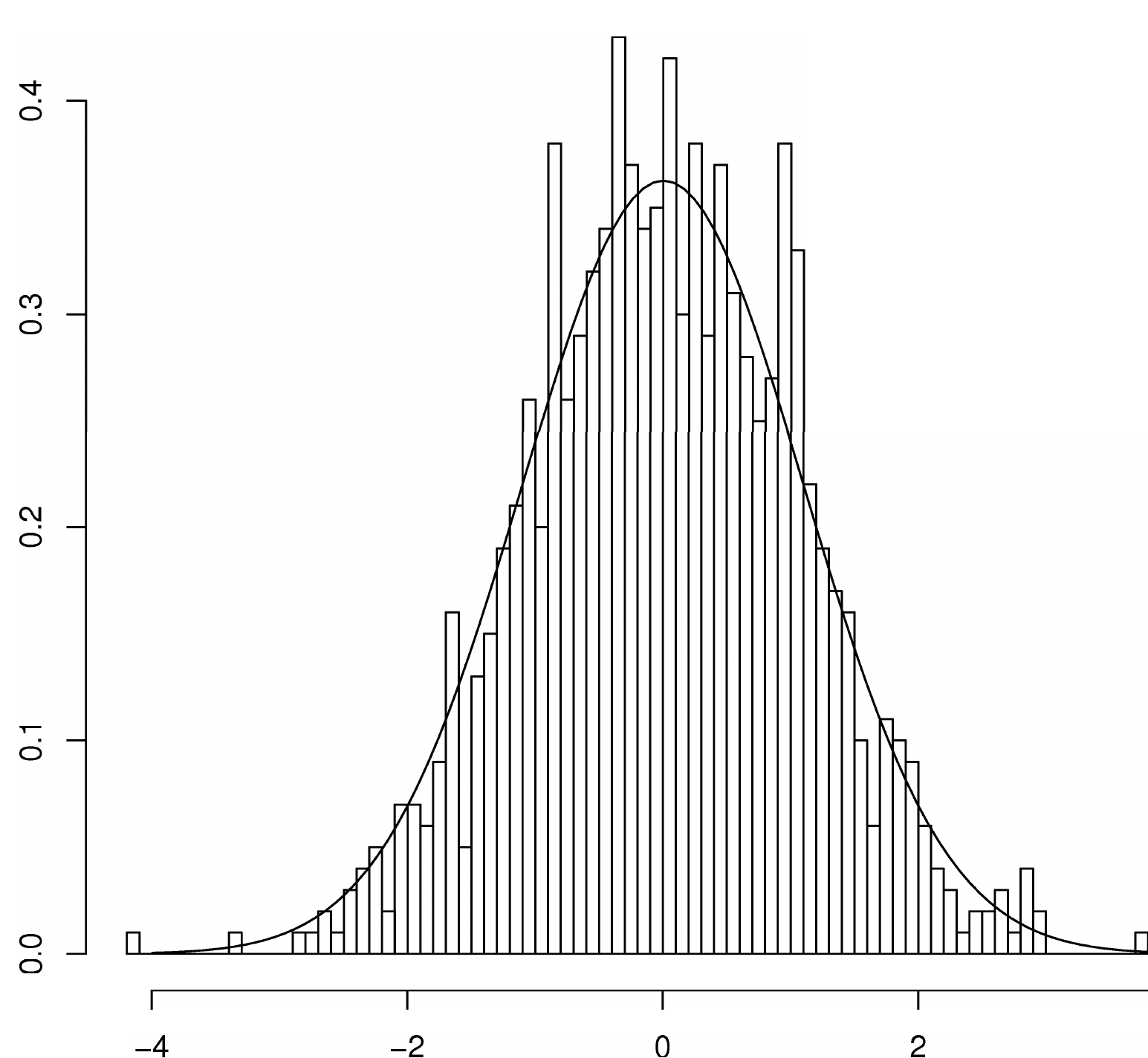}
      \caption{Distribution of $\textbf{X}_{i}'s$ in the\\ normal case for $P(\mathbf{S}_{1}^{n}>na)=10^{-2}$\\ and for $f(x)=x^{2}$}
   \end{minipage}\hfill
\end{figure}

\begin{remark}
The statistics $\mathbf{U}_{1}^{n}$ may be substituted by any regular
M-estimator when $a=a_{n}$ defines a moderate deviation event, namely when
(A) holds together with $a_{n}\rightarrow 0.$ In this case it is well known
that the distribution of any regular $M$-estimator is similar to that of the
mean of its influence function evaluated on the sample points; this allows
to simulate samples under a given model conditionally on an observed value
of a statistics, when observed in a rare area under the model.
\end{remark}

\section{Conclusion}

We have obtained an extended version of Gibbs conditional principle in the
simple case of real valued independent r.v's conditioned on the value of
their mean or on an average of their images through some real valued
function. The approximation of the density of long runs is shown to be quite
accurate and can be controlled through an explicit rule. Algorithms for the
simulation of these runs are presented, together with numerical examples.
Applications to Importance Sampling procedures for rare event simulation is
a first application of this scheme; it mainly requires to consider
conditioning events of the form $\left( \mathbf{S}_{1}^{n}>n\left( \sigma
a_{n}+\mu\right) \right) $ instead of $\left( \mathbf{S}_{1}^{n}=n\left(
\sigma a_{n}+\mu\right) \right) $; first numerical results obtained in \cite%
{BroniatowskiRitov2009} show a net gain in the variance for IS estimators
using an extension of the present scheme. Extension to the multivariate
setting is obtainable, requiring slight modifications. The case when the
conditioning event is in the CLT\ zone deserves attention due to its
interest to statistics. Simulation of $\mathbf{X}_{1}^{n}$ under a given $%
p_{\theta _{0}}$ in a model $\left( p_{\theta },\theta \in \Theta \right) $
may lead to conditional test when $a$ is substituted by the observed value
of a given statistics. The present case when conditioning under a moderate
deviation event is of interest for accurate assesment when the observed
statistics has small $p-$ value under a given hypothesis. Some other
potential application to statistics is related to test procedures in
presence of nuisance parameter, considering conditional tests under a
sufficient statistics for the nuisance.

\appendix{}

\section{Three Lemmas pertaining to the partial sum under its final value}

We state three lemmas which describe some functions of the random vector $%
\mathbf{X}_{1}^{n}$ conditioned on $\mathcal{E}_{n}$. The r.v. $\mathbf{X}$
is assumed to have expectation $0$ and variance $1.$

\begin{lemma}
\label{LemmaMomentsunderE_n}It holds $E_{\mathfrak{P}_{n}}\left( \mathbf{X}%
_{1}\right) =a,E_{\mathfrak{P}_{n}}\left( \mathbf{X}_{1}\mathbf{X}%
_{2}\right) =a{}^{2}+0\left( \frac{1}{n}\right) .$ $E_{\mathfrak{P}%
_{n}}\left( \mathbf{X}_{1}^{2}\right) =s^{2}(t)+a{}^{2}+0\left( \frac{1}{n}%
\right) $ where $m(t)=a.$
\end{lemma}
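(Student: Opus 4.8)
The plan is to obtain the three identities from a single genuine estimate, $E_{\mathfrak{P}_{n}}(\mathbf{X}_{1}^{2})=s^{2}(t)+a^{2}+O(1/n)$, together with two exchangeability bookkeeping steps. Conditioning the i.i.d. vector $\mathbf{X}_{1}^{n}$ on the permutation‑invariant event $\mathcal{E}_{n}$ keeps the law exchangeable, so $E_{\mathfrak{P}_{n}}(\mathbf{X}_{1})=\tfrac{1}{n}E_{\mathfrak{P}_{n}}(\mathbf{S}_{1}^{n})=a$ exactly, and from $(\mathbf{S}_{1}^{n})^{2}=\sum_{i}\mathbf{X}_{i}^{2}+\sum_{i\neq j}\mathbf{X}_{i}\mathbf{X}_{j}$ together with $\mathbf{S}_{1}^{n}=na$ a.s.\ under $\mathfrak{P}_{n}$ one gets
\begin{equation*}
n^{2}a^{2}=nE_{\mathfrak{P}_{n}}(\mathbf{X}_{1}^{2})+n(n-1)E_{\mathfrak{P}_{n}}(\mathbf{X}_{1}\mathbf{X}_{2}),
\end{equation*}
hence $E_{\mathfrak{P}_{n}}(\mathbf{X}_{1}\mathbf{X}_{2})=\big(na^{2}-E_{\mathfrak{P}_{n}}(\mathbf{X}_{1}^{2})\big)/(n-1)=a^{2}-s^{2}(t)/(n-1)+O(1/n^{2})$ as soon as the second moment is known.

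For the second moment I would invoke the tilting invariance (\ref{inv tilting}) at level $\alpha=a$: the conditional law of $\mathbf{X}_{1}$ given $\mathbf{S}_{1}^{n}=na$ is unchanged if the summands are resampled from $\pi^{a}$, under which they are i.i.d.\ with mean exactly $a$ and variance $s^{2}(t)$. Writing the conditional density as a ratio of normalized convolution densities $\overline{\pi}_{m}$ of $m$ i.i.d.\ $\pi^{a}$ variables, exactly as in the proof of Theorem \ref{Prop approx local cond density},
\begin{equation*}
p(\mathbf{X}_{1}=x\mid \mathbf{S}_{1}^{n}=na)=\pi^{a}(x)\sqrt{\tfrac{n}{n-1}}\,\frac{\overline{\pi}_{n-1}\big((a-x)/(s(t)\sqrt{n-1})\big)}{\overline{\pi}_{n}(0)},
\end{equation*}
I would apply the Edgeworth expansion of Remark \ref{Remark Edgeworth array} to numerator and denominator. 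The denominator is $\phi(0)(1+O(1/n))$ since the $n^{-1/2}$‑term carries the factor $H_{3}(0)=0$; in the numerator the argument is $O(n^{-1/2})$ for fixed $x$, so $\phi$ is evaluated near its maximum and the Hermite polynomial in the $n^{-1/2}$‑term vanishes there to first order, making every correction $O(1/n)$. This yields
\begin{equation*}
p(\mathbf{X}_{1}=x\mid \mathbf{S}_{1}^{n}=na)=\pi^{a}(x)\,\exp\!\Big(-\tfrac{(a-x)^{2}}{2s^{2}(t)(n-1)}\Big)\big(1+R_{n}(x)\big),
\end{equation*}
with $R_{n}(x)=O(1/n)$ uniformly on compact $x$‑sets and $\sup_{x}|R_{n}(x)|=O(1)$, the latter because the $\overline{\pi}_{m}$ are uniformly bounded by the local limit theorem (the characteristic function of $\pi^{a}$ lies in $L^{r}$), so the displayed ratio cannot blow up.

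Integrating against $x^{2}$ I would then expand $\exp(\cdot)=1-\tfrac{(a-x)^{2}}{2s^{2}(t)(n-1)}+O((a-x)^{4}/n^{2})$ in $E_{\mathfrak{P}_{n}}(\mathbf{X}_{1}^{2})=\int x^{2}\pi^{a}(x)\exp(-(a-x)^{2}/(2s^{2}(t)(n-1)))(1+R_{n}(x))\,dx$ and split the integral at $|x|\le(\log n)^{2}$: on the central region every term is $O(1/n)$, using that $\pi^{a}$ has finite moments of all orders, uniformly in $n$ because $a=a_{n}$ stays in a compact set and hence so does $t_{n}=m^{-1}(a_{n})$ in the interior of the domain of $\Phi$ (Cramér); on the tail $|x|>(\log n)^{2}$ one bounds $x^{2}$ against the exponential tail of $\pi^{a}$ to get $o(n^{-1})$. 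Since $\int x^{2}\pi^{a}(x)\,dx=s^{2}(t)+a^{2}$, this gives $E_{\mathfrak{P}_{n}}(\mathbf{X}_{1}^{2})=s^{2}(t)+a^{2}+O(1/n)$, and then $E_{\mathfrak{P}_{n}}(\mathbf{X}_{1}\mathbf{X}_{2})=a^{2}+O(1/n)$ from the relation above. I expect the only delicate point to be the uniform handling of $R_{n}$ across the two regimes — genuinely $O(1/n)$ on the slowly growing central window, negligible on the tails via the exponential moment of $\pi^{a}$ — together with checking that the $n$‑dependence of $a_{n}$ does not destroy the uniform boundedness of the cumulants of $\pi^{a_{n}}$ entering the Edgeworth coefficients.
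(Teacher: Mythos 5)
Your proof is correct. For $E_{\mathfrak{P}_{n}}(\mathbf{X}_{1})$ and $E_{\mathfrak{P}_{n}}(\mathbf{X}_{1}^{2})$ you follow essentially the paper's route: Bayes' formula, the tilting invariance at level $a$, a first-order Edgeworth expansion of the normalized convolution densities, and integration of $x^{2}$ against the resulting approximation of the conditional density; the paper compresses this into two sentences, and your version supplies the uniformity details (boundedness of $a_{n}$, hence of $t_{n}=m^{-1}(a_{n})$ and of the cumulants of $\pi^{a_{n}}$, plus the split at $|x|\le(\log n)^{2}$) that the paper leaves implicit. Where you genuinely depart from the paper is the cross moment: the paper obtains $E_{\mathfrak{P}_{n}}(\mathbf{X}_{1}\mathbf{X}_{2})$ by ``a similar development for the joint density $\mathfrak{P}_{n}(\mathbf{X}_{1}=x,\mathbf{X}_{2}=y)$'', i.e.\ a second Edgeworth computation with the $(n-2)$-fold convolution, whereas you extract it for free from exchangeability and the a.s.\ constraint $\mathbf{S}_{1}^{n}=na$ via $n^{2}a^{2}=nE_{\mathfrak{P}_{n}}(\mathbf{X}_{1}^{2})+n(n-1)E_{\mathfrak{P}_{n}}(\mathbf{X}_{1}\mathbf{X}_{2})$. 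Your route is more elementary, avoids a second expansion, and even yields the sharper form $E_{\mathfrak{P}_{n}}(\mathbf{X}_{1}\mathbf{X}_{2})=a^{2}-s^{2}(t)/(n-1)+O(1/n^{2})$; the paper's bivariate computation would be needed for mixed moments not determined by the constraint, but for the statement at hand yours is cleaner.

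One small imprecision, which does not affect your conclusion: $\sup_{x}|R_{n}(x)|=O(1)$ is not what the local limit theorem gives you, since $R_{n}$ is defined after dividing the (bounded) ratio $\overline{\pi}_{n-1}(z)/\overline{\pi}_{n}(0)$ by $\exp\left(-(a-x)^{2}/(2s^{2}(t)(n-1))\right)$, and that quotient can blow up for $|x|\gg\sqrt{n}$. What you actually need on the tail region $|x|>(\log n)^{2}$, and what you in effect use there, is the direct bound $p(\mathbf{X}_{1}=x\mid\mathbf{S}_{1}^{n}=na)\le C\,\pi^{a}(x)$, which does follow from the uniform boundedness of $\overline{\pi}_{n-1}$ (characteristic function in $L^{r}$) together with $\overline{\pi}_{n}(0)\to\phi(0)>0$; combined with the exponential tails of $\pi^{a}$ this makes the tail contribution $o(1/n)$ as you claim. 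With that rephrasing the argument is airtight.
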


\begin{proof}
Using%
\begin{equation*}
\mathfrak{P}_{n}(\mathbf{X}_{1}=x)=\frac{p_{\mathbf{S}_{2}^{n}}\left(
na-x\right) p_{\mathbf{X}_{1}}(x)}{p_{\mathbf{S}_{1}^{n}}\left( na\right) }=%
\frac{\pi_{\mathbf{S}_{2}^{n}}^{a}\left( na-x\right) \pi_{\mathbf{X}%
_{1}}^{a}(x)}{\pi_{\mathbf{S}_{1}^{n}}^{a}\left( na\right) }
\end{equation*}
normalizing both $\pi_{\mathbf{S}_{2}^{n}}^{a}\left( na-x\right) $ and $\pi_{%
\mathbf{S}_{1}^{n}}^{a}\left( na\right) $ and making use of a first order
Edgeworth expansion in those expressions yields the asymptotic expressions
for $E_{\mathfrak{P}_{n}}\left( \mathbf{X}_{1}^{2}\right)
=s^{2}(t)+a{}^{2}+0\left( \frac{1}{n}\right) $ here above. $\ $A similar
development for the joint density $\mathfrak{P}_{n}(\mathbf{X}_{1}=x,\mathbf{%
X}_{2}=y)$, using the same tilted distribution $\pi^{a}$ it readily follows
that the last result holds. We used the fact that $a_{n}$ is a bounded
sequence.
\end{proof}

The following result states the behavior of the moments of $\pi ^{m_{i}}.$

\begin{lemma}
\label{LemmaMaxm_in} Assume (A) and (E1). Then $\max_{1\leq i\leq
k}\left\vert m_{i}\right\vert =a+o_{\mathfrak{P}_{n}}\left(
\epsilon_{n}\right) .$ Also $\max_{1\leq i\leq k}s_{i}^{2}$, $\max_{1\leq
i\leq k}\mu_{3}^{i}$ and $\max_{1\leq i\leq k}\mu_{4}^{i}$ tend in $%
\mathfrak{P}_{n}$ probability to the variance, skewness and kurtosis of $p$
when $a=a_{n}\rightarrow0$ and remain bounded when $a$ is fixed positive.
\end{lemma}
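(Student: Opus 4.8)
The plan is to control the maximum of $|m_i|$ over $1\le i\le k$ by going back to the defining relation $m_i = \frac{n}{n-i}\bigl(a - \frac{\Sigma_1^i}{n}\bigr)$ and showing that the partial sums $\Sigma_1^i = Y_1+\cdots+Y_i$ do not deviate from their conditional mean $ia$ by more than $o_{\mathfrak{P}_n}(\epsilon_n n)$ uniformly in $i\le k$. Writing $m_i - a = \frac{i}{n-i}a - \frac{1}{n-i}(\Sigma_1^i - ia)$, the first term is bounded by $\frac{k}{n-k}a$; since $k/n$ may tend to $1$ this needs the joint force of (A) and (E1). Indeed (E1) gives $\epsilon_n\sqrt{n-k}\to\infty$, so $n-k\to\infty$, and the ratio $k/(n-k)$ together with the moderate/large deviation scale of $a$ must be shown to be $o(\epsilon_n)$; here I would invoke (A), which says $a^2/(\epsilon_n(\log n)^2)\to\infty$, to absorb the deterministic term. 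The genuinely probabilistic part is the second term: I would apply a maximal inequality (Kolmogorov/Doob type, or Etemadi) to the martingale-like partial sums of the conditioned sequence $Y_1^k$, using the moment bounds of Lemma \ref{LemmaMomentsunderE_n} — namely $E_{\mathfrak{P}_n}(\mathbf{X}_1)=a$, $E_{\mathfrak{P}_n}(\mathbf{X}_1^2)=s^2(t)+a^2+O(1/n)$, and $E_{\mathfrak{P}_n}(\mathbf{X}_1\mathbf{X}_2)=a^2+O(1/n)$ — to bound $\operatorname{Var}_{\mathfrak{P}_n}(\Sigma_1^k - ka)$ by $O(k) + O(k^2/n)$, hence $\max_{i\le k}|\Sigma_1^i - ia| = O_{\mathfrak{P}_n}(\sqrt{k} + k/\sqrt n)$ up to logarithmic factors. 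Dividing by $n-i\ge n-k$ and comparing with the rate $\epsilon_n$ via (E1) then yields $\max_{i\le k}|m_i - a| = o_{\mathfrak{P}_n}(\epsilon_n)$.

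Once the control of $\max_i|m_i|$ is in hand, the statements about $\max_i s_i^2$, $\max_i\mu_3^i$, $\max_i\mu_4^i$ follow by continuity. The quantities $s^2(t)$, $\mu_3(t)$, $\mu_4(t)$ are smooth functions of the tilting parameter $t$ on the interior of the natural parameter space (the cumulant generating function $\log\Phi$ is analytic there), and $t_i = m^{-1}(m_i)$ is a continuous function of $m_i$ with $m^{-1}$ continuous since $m$ is strictly increasing. Therefore $s_i^2 = s^2(t_i)$, $\mu_3^i$, $\mu_4^i$ are continuous images of $m_i$; since $\max_i|m_i|\to a$ in $\mathfrak{P}_n$-probability, when $a=a_n\to 0$ we get $t_i\to 0$ uniformly and hence $s_i^2\to s^2(0)=\operatorname{Var}(\mathbf{X})=1$, $\mu_3^i\to\mu_3(0)$, $\mu_4^i\to\mu_4(0)$, the skewness and kurtosis of $p$. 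When $a$ is fixed and positive, $m_i$ stays in a compact subinterval of the range of $\mathbf{X}$ bounded away from the endpoints (with probability $\to 1$), so $t_i$ lies in a compact subset of the open parameter interval, on which $s^2$, $\mu_3$, $\mu_4$ are bounded; this gives boundedness in probability of the three maxima.

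The main obstacle is the uniform-in-$i$ control of $\Sigma_1^i - ia$ under the conditional law $\mathfrak{P}_n$ when $k/n\to 1$: the summands $Y_1,\dots,Y_k$ are exchangeable but not independent under conditioning, so a naive second-moment bound picks up the $O(k^2/n)$ cross-covariance term from Lemma \ref{LemmaMomentsunderE_n}, and one must check that after division by $(n-i)^2$ this is still $o(\epsilon_n^2)$ using (E1)–(E2) and (A). I would handle this by a careful bookkeeping of the competing rates — $\sqrt{k}/(n-k)$ from the variance part and $k/((n-k)\sqrt n)$ from the covariance part against $\epsilon_n$ — rather than by any delicate new inequality; alternatively one can pass through the tilted representation $\pi^a$ and the local limit theorem of Remark \ref{Remark Edgeworth array} to replace the conditional law by a product law up to negligible error, which decouples the summands and reduces the maximal inequality to the classical i.i.d. case. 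The remaining steps are routine given Lemma \ref{LemmaMomentsunderE_n} and the smoothness of the cumulant generating function.
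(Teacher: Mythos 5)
There are two concrete problems with your argument for the first assertion. First, an algebra slip: from $m_i=\frac{n}{n-i}\bigl(a-\frac{\Sigma_1^i}{n}\bigr)=\frac{na-\Sigma_1^i}{n-i}$ one gets exactly $m_i-a=\frac{ia-\Sigma_1^i}{n-i}=-\frac{1}{n-i}\bigl(\Sigma_1^i-ia\bigr)$; there is no residual deterministic term $\frac{i}{n-i}a$. Your decomposition double-counts the centering, and the term you propose to ``absorb via (A)'' cannot be absorbed: (A) states $a^2/(\epsilon_n(\log n)^2)\to\infty$, i.e.\ it makes $a$ \emph{large} relative to $\epsilon_n$, so $\frac{k}{n-k}a=o(\epsilon_n)$ would be false whenever $k/n\to1$. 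Had that term really been present the lemma itself would fail, so this step as written is not repairable by (A); it simply should not be there.

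Second, and more seriously, your rate bookkeeping does not close. With the crude bound $\mathrm{Var}_{\mathfrak{P}_n}(\Sigma_1^k-ka)=O(k)+O(k^2/n)$ you get $\max_i|\Sigma_1^i-ia|=O_{\mathfrak{P}_n}(\sqrt{n})$ when $k\sim n$, and after dividing by $n-k$ you need $\sqrt{n}/(n-k)=o(\epsilon_n)$, which (E1) does not give (take $n-k=\sqrt n$ and $\epsilon_n=n^{-1/5}$: (E1) and (E2) hold but $\sqrt n/(n-k)=1\not=o(\epsilon_n)$). What saves the lemma is the \emph{negative} covariance under conditioning, $\mathrm{Cov}_{\mathfrak{P}_n}(Y_1,Y_2)\approx -s^2/n$, which makes $\mathrm{Var}_{\mathfrak{P}_n}(\Sigma_1^i-ia)\asymp i(n-i)/n\le n-i$, together with a maximal inequality that respects the $i$-dependent normalization $1/(n-i)$. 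The paper achieves both at once by setting $V_{i+1}:=m_i-a=\frac{\Sigma_{i+1}^n}{n-i}-a$ and running a Kolmogorov-type argument directly on the sequence $(V_i)$, using the orthogonality $E_{\mathfrak{P}_n}\bigl[V_i(V_k-V_i)\bigr]=0$ so that the maximum is controlled by $\mathrm{Var}_{\mathfrak{P}_n}(V_k)=(1+o(1))/(n-k)$ alone, whence $\mathfrak{P}_n(\max_i|V_i|>\delta\epsilon_n)\le (1+o(1))/(\delta^2\epsilon_n^2(n-k))\to0$ by (E1). Your fallback of replacing $\mathfrak{P}_n$ by a product tilted law for $k\sim n$ is also not available --- the breakdown of that product approximation on long runs is precisely what the paper is about. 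Your second part (continuity of $s^2$, $\mu_3$, $\mu_4$ in the tilting parameter once $\max_i|m_i-a|$ is controlled) is fine and is essentially what the paper leaves implicit.
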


\begin{proof}
Define 
\begin{align*}
V_{i+1} & :=m(t_{i})-a \\
& =\frac{\Sigma_{i+1}^{n}}{n-i}-a.
\end{align*}
We state that 
\begin{equation}
{\max_{0\leq{i\leq{k-1}}}|V_{i+1}|=}o_{\mathfrak{P}_{n}}\left( \epsilon
_{n}\right) ,  \label{maxV_i+1}
\end{equation}
namely for all positive $\delta$ 
\begin{equation*}
\lim_{n\rightarrow\infty}\mathfrak{P}_{n}\left( \max_{0\leq{i\leq{k-1}}%
}|V_{i+1}|>\delta\epsilon_{n}\right) =0
\end{equation*}
which we prove following Kolmogorov maximal inequality. Define 
\begin{equation*}
A_{i}:=\left( \left( |V_{i+1}|\geq\delta\epsilon_{n}\right) \text{ and }%
\left( |V_{j}|<\delta\epsilon_{n}\text{ for all }j<i+1\right) \right) .
\end{equation*}
from which 
\begin{equation*}
\left( \max_{0\leq{i}\leq{k-1}}|V_{i+1}|>\delta\epsilon_{n}\right)
=\bigcup_{i=0}^{k-1}A_{i}.
\end{equation*}
It holds 
\begin{align*}
E_{\mathfrak{P}_{n}}V_{k}^{2} & ={\int_{\cup A_{i}}V_{k}^{2}d\mathfrak{P}%
_{n}+\int_{\left( \cup A_{i}\right) ^{c}}V_{k}^{2}d}\mathfrak{P}_{n} \\
& \geq{\int_{\cup A_{i}}}\left( {V_{i}^{2}+2}\left( V_{k}-V_{i}\right)
V_{i}\right) {d\mathfrak{P}_{n}+\int_{\left( \cup A_{i}\right) ^{c}}}\left( {%
V_{i}^{2}+2}\left( V_{k}-V_{i}\right) V_{i}\right) {d}\mathfrak{P}_{n} \\
& \geq{\int_{\cup A_{i}}V_{i}^{2}d}\mathfrak{P}_{n} \\
& \geq\delta^{2}\epsilon_{n}^{2}{\sum_{j=0}^{k-1}\mathfrak{P}_{n}(A_{j})} \\
& =\delta^{2}\epsilon_{n}^{2}\mathfrak{P}_{n}\left( {\max_{0\leq{i}\leq {k-1}%
}|V_{i+1}|>}\delta\epsilon_{n}\right) .
\end{align*}

The third line above follows from $EV_{i}\left( V_{k}-V_{i}\right) =0$ which
is proved hereunder.\ Hence 
\begin{equation*}
\mathfrak{P}_{n}\left( {\max_{0\leq{i}\leq{k-1}}|V_{i+1}|>}\delta\epsilon
_{n}\right) \leq\frac{Var_{\mathfrak{P}_{n}}(V_{k})}{\delta^{2}\epsilon
_{n}^{2}}=\frac{1}{\delta^{2}\epsilon_{n}^{2}\left( n-k\right) }(1+o(1))
\end{equation*}
where we used Lemma \ref{LemmaMomentsunderE_n}; therefore (\ref{maxV_i+1})
holds under (E1). By direct calculation, we can show that $E_{\mathfrak{P}_{n}}\left(V_{i}(V_{k}-V_{i})\right) ={0}$, 
which achieves the proof.
\end{proof}

We also need the order of magnitude of $\max\left( \mathbf{X}_{1},...,%
\mathbf{X}_{k}\right) $ under $\mathfrak{P}_{n}$ which is stated in the
following result.

\begin{lemma}
\label{Lemma max X_i under conditioning} For all $k$ between $1$ and $%
n,\max\left( \mathbf{X}_{1},...,\mathbf{X}_{k}\right) =O_{\mathfrak{P}%
_{n}}\left( \log k\right) .$
\end{lemma}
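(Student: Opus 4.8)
The plan is to combine a union bound over the first $k$ coordinates, which are exchangeable under $\mathfrak{P}_n$, with a uniform estimate of the one--dimensional conditional tail $\mathfrak{P}_n(\mathbf{X}_1>t)$, the latter being reduced through Bayes' formula and tilting to a Cram\'er--type bound for $\Pi^a$. First I would note that under $\mathfrak{P}_n$ the law of $\mathbf{X}_1^n$ is exchangeable, being $p^{\otimes n}$ conditioned on the symmetric event $\left(\mathbf{S}_1^n=na\right)$; hence
\[
\mathfrak{P}_n\Big(\max_{1\le i\le k}\mathbf{X}_i>t\Big)\le k\,\mathfrak{P}_n(\mathbf{X}_1>t),
\]
so it suffices to bound $\mathfrak{P}_n(\mathbf{X}_1>t)$ uniformly in $n$.

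Next, exactly as in the proof of Lemma \ref{LemmaMomentsunderE_n}, write, using the tilting invariance,
\[
\mathfrak{P}_n(\mathbf{X}_1=x)=\frac{p_{\mathbf{S}_2^n}(na-x)\,p(x)}{p_{\mathbf{S}_1^n}(na)}=\frac{\pi^a_{\mathbf{S}_2^n}(na-x)}{\pi^a_{\mathbf{S}_1^n}(na)}\,\pi^a(x).
\]
Under $\Pi^a$ the summands are i.i.d. with mean $a$, so by the uniform local limit theorem for the tilted array (Remark \ref{Remark Edgeworth array}) one has, for $n$ large, $\pi^a_{\mathbf{S}_2^n}(na-x)\le C_1/\sqrt{n}$ uniformly in $x$ and $\pi^a_{\mathbf{S}_1^n}(na)\ge c_1/\sqrt{n}$, with $C_1,c_1$ depending only on the bounded range of the sequence $(a_n)$. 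Therefore $\mathfrak{P}_n(\mathbf{X}_1=x)\le C\,\pi^a(x)$ for every $x\in\mathbb{R}$ and all large $n$, with $C$ independent of $n$, whence $\mathfrak{P}_n(\mathbf{X}_1>t)\le C\,\Pi^a(\mathbf{X}>t)$ after integration.

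I would then close with a Chernoff bound. Since $\mathbf{X}$ satisfies the Cram\'er condition and $a_n$ is bounded, the tilting parameter $\tau_n$ defined by $m(\tau_n)=a_n$ remains in a compact subset of the interior of the domain of $\Phi$, so there is a fixed $\lambda>0$ with $\sup_n E_{\Pi^a}e^{\lambda\mathbf{X}}=\sup_n\Phi(\tau_n+\lambda)/\Phi(\tau_n)=:C_2<\infty$; Markov's inequality gives $\Pi^a(\mathbf{X}>t)\le C_2 e^{-\lambda t}$. Combining the three steps,
\[
\mathfrak{P}_n\Big(\max_{1\le i\le k}\mathbf{X}_i>t\Big)\le C\,C_2\,k\,e^{-\lambda t},
\]
and taking $t=M\log k$ with $M>2/\lambda$ gives a bound $\le C\,C_2\,k^{-1}\to0$ when $k\to\infty$ (the statement being trivial when $k$ stays bounded). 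Together with the trivial lower bound $\max_{1\le i\le k}\mathbf{X}_i\ge\mathbf{X}_1=O_{\mathfrak{P}_n}(1)$ supplied by Lemma \ref{LemmaMomentsunderE_n}, this yields $\max_{1\le i\le k}\mathbf{X}_i=O_{\mathfrak{P}_n}(\log k)$.

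The main obstacle is the uniformity --- in $x$, in $n$, and in $a=a_n$ --- of the pointwise domination $\mathfrak{P}_n(\mathbf{X}_1=x)\le C\,\pi^a(x)$: this is precisely where the uniform local central limit theorem (equivalently, the first-order Edgeworth expansion) for the tilted triangular array, together with the matching lower bound on the normalized density at its mean, is needed. The union bound and the Chernoff estimate are then routine.
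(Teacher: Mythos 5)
Your proposal is correct and follows essentially the same route as the paper: a union bound over the exchangeable coordinates, reduction of the one-dimensional conditional tail via tilting to a ratio of tilted sum densities, a uniform first-order Edgeworth (local limit) bound showing that ratio is bounded by a constant, and a Chernoff estimate for the tilted tail. The only cosmetic difference is that you take $t=M\log k$ with $M>2/\lambda$ explicitly, whereas the paper lets $t/\log k\rightarrow\infty$; both yield the claimed $O_{\mathfrak{P}_{n}}\left( \log k\right)$ bound.
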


\begin{proof}
For all $t$ it holds%
\begin{align*}
\mathfrak{P}_{n}\left( \max\left( \mathbf{X}_{1},...,\mathbf{X}_{k}\right)
>t\right) & \leq k\mathfrak{P}_{n}\left( \mathbf{X}_{n}>t\right) \\
& =k\int_{t}^{\infty}\pi^{a}\left( \mathbf{X}_{n}=u\right) \frac{\pi ^{a}(%
\mathbf{S}_{1}^{n-1}=na_{n}-u)}{\pi^{a}\left( \mathbf{S}_{1}^{n}=na_{n}%
\right) }du.
\end{align*}
Let $\tau$ be such that $m(\tau)=a.$ Center and normalize both $\mathbf{S}%
_{1}^{n}$ and $\mathbf{S}_{1}^{n-1}$with respect to the density $\pi^{a}$ in
the last line above, denoting $\overline{\pi_{n}^{a}}$ the density of $%
\overline{\mathbf{S}_{1}^{n}}:=\left( \mathbf{S}_{1}^{n}-na_{n}\right)
/s^{(a)}\sqrt{n}$ when $\mathbf{X}$ has density $\pi^{a}$ with mean $a$ and
variance $\left( s^{(a)}\right) ^{2},$ we get 
\begin{align*}
\mathfrak{P}_{n}\left( \max\left( \mathbf{X}_{1},...,\mathbf{X}_{k}\right)
>t\right) & \leq k\frac{\sqrt{n}}{\sqrt{n-1}}\int_{t}^{\infty}\pi ^{a}\left( 
\mathbf{X}_{n}=u\right) \\
& \frac{\overline{\pi_{n-1}^{a}}\left( \overline{\mathbf{S}_{1}^{n-1}}%
=\left( na-u-(n-1)a\right) /\left( s^{(a)}\sqrt{n-1}\right) \right) }{%
\overline{\pi_{n}^{a}}\left( \overline{\mathbf{S}_{1}^{n}}=0\right) }du.
\end{align*}
Under the sequence of densities $\pi^{a}$ the triangular array $\left( 
\mathbf{X}_{1},...,\mathbf{X}_{n}\right) $ obeys a first order Edgeworth
expansion 
\begin{align*}
\mathfrak{P}_{n}\left( \max\left( \mathbf{X}_{1},...,\mathbf{X}_{k}\right)
>t\right) & \leq k\frac{\sqrt{n}}{\sqrt{n-1}}\int_{t}^{\infty}\pi ^{a}\left( 
\mathbf{X}_{n}=u\right) \\
& \frac{\mathfrak{n}\left( \left( a-u\right) /s^{(a)}\sqrt{n-1}\right) 
\mathbf{P}\left( u,i,n\right) +o(1)}{\mathfrak{n}\left( 0\right) +o(1)}du \\
& \leq kCst\int_{t}^{\infty}\pi^{a}\left( \mathbf{X}_{n}=u\right) du.
\end{align*}
for some constant $Cst$ independent of $n$ and $\tau$ and 
\begin{equation*}
\mathbf{P}\left( u,i,n\right) :=1+P_{3}\left( \left( a-u\right) /s^{(a)}%
\sqrt{n-1}\right)
\end{equation*}
where $P_{3}(x)=\frac{\mu_{3}^{(a)}}{6\left( s^{(a)}\right) ^{3}}\left(
x^{3}-3x\right) $ is the third Hermite polynomial; $\mu_{3}^{(a)}$ is the
third centered moment of $\pi^{a}.$ We used uniformity upon $u$ in the
remaining term of the Edgeworth expansions. Making use of Chernoff
Inequality to bound $\Pi^{a}\left( \mathbf{X}_{n}>t\right) ,$%
\begin{equation*}
\mathfrak{P}_{n}\left( \max\left( \mathbf{X}_{1},...,\mathbf{X}_{k}\right)
>t\right) \leq kCst\frac{\Phi(t+\lambda)}{\Phi(t)}e^{-\lambda t}
\end{equation*}
for any $\lambda$ such that $\phi(t+\lambda)$ is finite. For $t$ such
that 
\begin{equation*}
t/\log k\rightarrow\infty
\end{equation*}
it holds 
\begin{equation*}
\mathfrak{P}_{n}\left( \max\left( \mathbf{X}_{1},...,\mathbf{X}_{k}\right)
<t\right) \rightarrow1,
\end{equation*}
which proves the lemma.\bigskip
\end{proof}

\section{Proof of the approximations resulting from Edgeworth expansions in
Theorem 1}

We complete the calculation leading to (\ref{Adem}) and (\ref{Bdem}).

Set $Z_{i+1}:=\left( m_{i}-Y_{i+1}\right) /s_{i}\sqrt{n-i-1}.$

It then holds%
\begin{align}
\overline{\pi_{n-i-1}}\left( Z_{i+1}\right) & =\mathfrak{n}(Z_{i+1})\left[ 
\begin{array}{c}
1+\frac{1}{\sqrt{n-i-1}}P_{3}(Z_{i+1})+\frac{1}{n-i-1}P_{4}(Z_{i+1}) \\ 
+\frac{1}{\left( n-i-1\right) ^{3/2}}P_{5}(Z_{i+1})%
\end{array}
\right]  \label{Hermite} \\
& +O_{\mathfrak{P}_{n}}\left( \frac{1}{\left( n-i-1\right) ^{3/2}}\right) . 
\notag
\end{align}

We perform an expansion in $\mathfrak{n}(Z_{i+1})$ up to the order $3,$ with
a first order term $\mathfrak{n}\left( -Y_{i+1}/\left( s_{i}\sqrt {n-i-1}%
\right) \right) ,$ namely%
\begin{align}
\mathfrak{n}(Z_{i+1}) & =\mathfrak{n}\left( -Y_{i+1}/\left( s_{i}\sqrt{n-i-1}%
\right) \right)  \label{approx gauss} \\
& \left( 
\begin{array}{c}
1+\frac{Y_{i+1}m_{i}}{s_{i}^{2}\left( n-i-1\right) }+\frac{m_{i}^{2}}{%
2s_{i}^{2}\left( n-i-1\right) }\left( \frac{Y_{i+1}^{2}}{s_{i}^{2}\left(
n-i-1\right) }-1\right) \\ 
+\frac{m_{i}^{3}}{6s_{i}^{3}\left( n-i-1\right) ^{3/2}}\frac{\mathfrak{n}%
^{(3)}\left( \frac{Y^{\ast}}{\left( s_{i}\sqrt{n-i-1}\right) }\right) }{%
\mathfrak{n}\left( -Y_{i+1}/\left( s_{i}\sqrt{n-i-1}\right) \right) }%
\end{array}
\right)  \notag
\end{align}
where $Y^{\ast}=\frac{1}{s_{i}\sqrt{n-i-1}}(-Y_{i+1}+\theta m_{i})$ with $%
\left\vert \theta\right\vert <1.$

Lemmas \ref{LemmaMaxm_in} and \ref{Lemma max X_i under conditioning} provide
the orders of magnitude of the random terms in the above displays when
sampling under $\mathfrak{P}_{n}.$

Use those lemmas to obtain 
\begin{equation}
\frac{Y_{i+1}m_{i}}{s_{i}^{2}\left( n-i-1\right) }=\frac{Y_{i+1}}{n-i-1}%
\left( a+o_{\mathfrak{P}_{n}}\left( \epsilon_{n}\right) \right)
\label{control 1}
\end{equation}
and%
\begin{equation*}
\frac{m_{i}^{2}}{s_{i}^{2}\left( n-i-1\right) }=\frac{1}{n-i-1}\left( a+o_{%
\mathfrak{P}_{n}}\left( \epsilon_{n}\right) \right) ^{2}.
\end{equation*}
Also when (A) holds then the dominant terms in the bracket in (\ref{approx
gauss}) are precisely those in the two displays just above. This yields 
\begin{equation}
\mathfrak{n}(Z_{i+1})=\mathfrak{n}\left( \frac{-Y_{i+1}}{s_{i}\sqrt{n-i-1}}%
\right) \left( 
\begin{array}{c}
1+\frac{aY_{i+1}}{s_{i}^{2}(n-i-1)}-\frac{a^{2}}{2s_{i}^{2}(n-i-1)} \\ 
+\frac{o_{\mathfrak{P}_{n}}(\epsilon_{n}\log n)}{n-i-1}%
\end{array}
\right) .  \notag
\end{equation}

We now need a precise evaluation of the terms in the Hermite polynomials in (%
\ref{Hermite}).\ This is achieved using Lemmas \ref{LemmaMaxm_in} and \ref%
{Lemma max X_i under conditioning} which provide uniformity upon $i$ between 
$1$ and $k=k_{n}$ in all terms depending on the sample path $Y_{1}^{k}.$ The
Hermite polynomials depend upon the moments of the underlying density $%
\pi^{m_{i}}.$ Since $\overline{\pi_{1}^{m_{i}}}$ has expectation $0$ and
variance $1$ the terms corresponding to $P_{1}$ \ and $P_{2}$ vanish. Up to
the order $4$ the polynomials write $P_{3}(x)=\frac{\mu_{3}^{(i)}}{6\left(
s_{i}\right) ^{3}}H_{3}(x)$, $P_{4}(x)=\frac{(\mu_{3}^{i})^{2}}{72\left(
s_{i}\right) ^{6}}H_{6}(x)+\frac{\mu_{4}^{(i,n)}-3\left( s_{i}\right) ^{4}}{%
24\left( s_{i}\right) ^{4}}H_{4}(x) $ with $H_{3}(x):=x^{3}-3x$, $H_{4}(x):=
x^{4}+6x^{2}-3$ and $H_{6}(x):=x^{6}-15x^{4}+45x^{2}-15.$

Using Lemma \ref{LemmaMaxm_in} it appears that the terms in $x^{j},$ $j\geq
3 $ in $P_{3}$ and $P_{4}$ will play no role in the asymptotic behavior in (%
\ref{Hermite}) with respect to the constant term in $P_{4}$ and the term in $%
x$ from $P_{3}\ .$ Indeed substituting $x$ by $Z_{i+1}$ and dividing by $%
n-i-1$, the term in $x^{2}$ in $P_{4}$ writes $O_{\mathfrak{P}_{n}}\left(
\log n\right) ^{2}/(n-i)^{2}$ where we used Lemma \ref{LemmaMaxm_in}. These
terms are of smaller order than the term $-3x$ in $P_{3}$ which writes $-%
\frac{\mu_{3}^{i}}{2s_{i}^{4}\left( n-i-1\right) }\left( a-Y_{i+1}\right) =%
\frac{1}{n-i-1}O_{\mathfrak{P}_{n}}\left( \log n\right) .$

It holds%
\begin{align*}
\frac{P_{3}(Z_{i+1})}{\sqrt{n-i-1}} & =-\frac{\mu_{3}^{i}}{2s_{i}^{4}\left(
n-i-1\right) }\left( m_{i}-Y_{i+1}\right) \\
& \text{ }+\frac{\mu_{3}^{i}\left( m_{i}-Y_{i+1}\right) ^{3}}{6\left(
s_{i}\right) ^{6}(n-i-1)^{2}}
\end{align*}
which yields 
\begin{equation}
\frac{P_{3}(Z_{i+1})}{\sqrt{n-i-1}}=-\frac{\mu_{3}^{i}}{2s_{i}^{4}\left(
n-i-1\right) }\left( a-Y_{i+1}\right) +\frac{1}{\left( n-i-1\right) ^{2}}O_{%
\mathfrak{P}_{n}}\left( \log n\right) ^{3}.  \label{P3}
\end{equation}
For the term of order $4$ it holds 
\begin{equation*}
\frac{P_{4}(Z_{i+1})}{n-i-1}=\frac{1}{n-i-1}\left( \frac{(\mu_{3}^{i})^{2}}{%
72s_{i}^{6}} H_{6}(Z_{i+1})+\frac{\mu_{4}^{i}-3s_{i}^{4}}{24s_{i}^{4}}%
H_{4}(Z_{i+1})\right)
\end{equation*}
which yields 
\begin{equation}
\frac{P_{4}(Z_{i+1})}{n-i-1}=-\frac{\mu_{4}^{i}-3s_{i}^{4}}{8s_{i}^{4}\left(
n-i-1\right)}-\frac{15(\mu_{3}^{i})^{2}}{72s_{i}^{6}(n-i-1)}+\frac{O_{\mathfrak{P}%
_{n}}\left( (\log n)^{2}\right)}{\left(n-i-1\right) ^{2}} .  \label{P4}
\end{equation}
The fifth term in the expansion plays no role in the asymptotics, under (A).

To sum up , under (A), and comparing the remainder terms in (\ref{P3}) and (%
\ref{P4}), we get%
\begin{equation*}
\overline{\pi_{n-i-1}}\left( Z_{i+1}\right) =\mathfrak{n}\left(
-Y_{i+1}/\left( s_{i}\sqrt{n-i-1}\right) \right) .A.B+O_{\mathfrak{P}%
_{n}}\left( \frac{1}{\left( n-i-1\right) ^{3/2}}\right)
\end{equation*}
where $A$ and $B$ are given in (\ref{Adem}) and (\ref{Bdem}).

\section{Final step of the proof of Theorem 1}

We make use of the following version of the law of large numbers for
triangular arrays (see \cite{Taylor1985} Theorem 3.1.3).

\begin{theorem}
\label{TheoremTaylor}Let $X_{i,n}$ ,$1\leq i\leq k$ denote an array of
row-wise real exchangeable r.v's and $\lim_{n\rightarrow\infty}k=\infty.$
Let $\rho_{n}:=EX_{1,n}X_{2,n}.$ Assume that for some finite $\Gamma$ , $%
EX_{1,n}^{2}\leq\Gamma.$ If for some doubly indexed sequence $\left(
a_{i,n}\right) $ such that $\lim_{n\rightarrow\infty}%
\sum_{i=1}^{k}a_{i,n}^{2}=0$ it holds%
\begin{equation*}
\lim_{n\rightarrow\infty}\rho_{n}\left( \sum_{i=1}^{k}a_{i,n}^{2}\right)
^{2}=0
\end{equation*}
then%
\begin{equation*}
\lim_{n\rightarrow\infty}\sum_{i=1}^{k}a_{i,n}X_{i,n}=0
\end{equation*}
in probability.
\end{theorem}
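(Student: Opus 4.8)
The plan is to prove the convergence in mean square and then to conclude by Markov's (Chebyshev's) inequality. Since $X_{1,n}$ is square integrable uniformly in $n$, we may first recenter the rows: replacing $X_{i,n}$ by $X_{i,n}-EX_{1,n}$ preserves row-wise exchangeability, does not increase the bound $EX_{1,n}^{2}\le\Gamma$, and changes $\rho_{n}$ only by subtracting the nonnegative quantity $(EX_{1,n})^{2}$, so it cannot spoil the assumption on $\rho_{n}$; the deterministic shift $EX_{1,n}\sum_{i=1}^{k}a_{i,n}$ it produces is controlled by the weight conditions. Hence it suffices to show $E\big[(\sum_{i=1}^{k}a_{i,n}X_{i,n})^{2}\big]\to 0$ in the centered case $EX_{1,n}=0$.

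First I would expand the square and use exchangeability to collapse the sums: every diagonal moment $EX_{i,n}^{2}$ equals $EX_{1,n}^{2}\le\Gamma$, and every off-diagonal moment $EX_{i,n}X_{j,n}$ with $i\neq j$ equals $\rho_{n}$, whence
\[
E\Big[\Big(\sum_{i=1}^{k}a_{i,n}X_{i,n}\Big)^{2}\Big]
=EX_{1,n}^{2}\sum_{i=1}^{k}a_{i,n}^{2}
+\rho_{n}\Big[\Big(\sum_{i=1}^{k}a_{i,n}\Big)^{2}-\sum_{i=1}^{k}a_{i,n}^{2}\Big].
\]
The first term is at most $\Gamma\sum_{i}a_{i,n}^{2}$, which tends to $0$ by hypothesis, and $\rho_{n}\sum_{i}a_{i,n}^{2}\to 0$ because $|\rho_{n}|\le EX_{1,n}^{2}\le\Gamma$ by Cauchy--Schwarz. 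Everything thus reduces to controlling the cross term $\rho_{n}(\sum_{i}a_{i,n})^{2}$.

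The cross term is the real obstacle, and it is where the hypothesis $\rho_{n}(\sum_{i}a_{i,n}^{2})^{2}\to 0$ must be exploited. I would organize its control through the exchangeable (de Finetti-type) structure of the rows: write $X_{i,n}=M_{n}+\eta_{i,n}$, with $M_{n}$ the directing random mean of the $n$-th row, of variance comparable to $\rho_{n}$, and $(\eta_{i,n})_{i}$ conditionally centered fluctuations with second moment at most $\Gamma$; the $\eta$-part contributes only a $\sum_{i}a_{i,n}^{2}$ term already disposed of, while the $M_{n}$-part contributes $M_{n}\sum_{i}a_{i,n}$, whose magnitude is governed by $\rho_{n}$ together with the weight sums, precisely as in \cite{Taylor1985}. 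Once $E\big[(\sum_{i}a_{i,n}X_{i,n})^{2}\big]\to 0$ has been secured, Markov's inequality $P(|\sum_{i}a_{i,n}X_{i,n}|>\varepsilon)\le\varepsilon^{-2}E\big[(\sum_{i}a_{i,n}X_{i,n})^{2}\big]$ concludes. The delicate point, and the only one that is not a one-line Chebyshev estimate, is therefore the passage through the off-diagonal term using exchangeability in the quantitative form stated; the diagonal contribution and the final step are routine.
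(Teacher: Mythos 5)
The paper does not actually prove this statement: it is imported verbatim as Theorem 3.1.3 of Taylor, Daffer and Patterson \cite{Taylor1985}, so there is no internal proof to compare yours against. Your elementary route --- compute the second moment, use exchangeability to collapse it to a diagonal term and a single off-diagonal coefficient, then apply Chebyshev --- is the standard proof of that result, and your displayed identity
\[
E\Big[\Big(\textstyle\sum_{i=1}^{k}a_{i,n}X_{i,n}\Big)^{2}\Big]
=EX_{1,n}^{2}\sum_{i=1}^{k}a_{i,n}^{2}
+\rho_{n}\Big[\Big(\sum_{i=1}^{k}a_{i,n}\Big)^{2}-\sum_{i=1}^{k}a_{i,n}^{2}\Big]
\]
is correct and essentially finishes the job.

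One point needs to be made explicit, however. Your reduction requires $\rho_{n}\bigl(\sum_{i}a_{i,n}\bigr)^{2}\to 0$, whereas the hypothesis as printed in the statement is $\rho_{n}\bigl(\sum_{i}a_{i,n}^{2}\bigr)^{2}\to 0$, with the square inside the sum. Under the printed hypothesis the theorem is false (take $X_{i,n}\equiv 1$ and $a_{i,n}=1/k$: then $\sum_{i}a_{i,n}^{2}=1/k\to 0$, $\rho_{n}(\sum_{i}a_{i,n}^{2})^{2}=1/k^{2}\to 0$, yet $\sum_{i}a_{i,n}X_{i,n}=1$), so no proof can close the gap from that hypothesis. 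This is evidently a typo in the paper: in its own application of the theorem the paper verifies precisely $\lim_{n}\rho_{n}\bigl(\sum_{i}a_{n,i}\bigr)^{2}=0$. You have silently used the corrected hypothesis; you should say so. Two smaller remarks. First, the recentering step is both under-justified (you assert the shift $EX_{1,n}\sum_{i}a_{i,n}$ is ``controlled by the weight conditions,'' but $\sum_{i}a_{i,n}$ need not be small just because $\sum_{i}a_{i,n}^{2}$ is) and unnecessary: your identity is for the raw second moment, so bounding it by $\Gamma\sum_{i}a_{i,n}^{2}+|\rho_{n}|\sum_{i}a_{i,n}^{2}+\bigl|\rho_{n}(\sum_{i}a_{i,n})^{2}\bigr|$ and invoking Markov gives $L^{2}$-convergence to $0$ directly, with no centering and with the sign of $\rho_{n}$ handled by the absolute values. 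Second, the de Finetti decomposition paragraph adds nothing once the identity is in hand and can be deleted.
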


\ Denote

\begin{equation*}
\kappa_{1}^{i}:=\frac{\mu_{3}^{i}}{2s_{i}^{4}}\text{ \ \ , \ \ }%
\kappa_{2}^{i}:=\frac{\mu_{3}^{i}-s_{i}^{4}}{8s_{i}^{4}}+\frac{%
15(\mu_{3}^{i})^{2}}{72s_{i}^{6}},
\end{equation*}
\begin{equation*}
\mu_{1}^{\ast }:=\kappa_{1}^{i}+\frac{a}{s_{i}^{2}}\text{ \ \ , \ \ }%
\mu_{2}^{\ast}:=\kappa_{1}^{i}-\frac{a}{2s_{i}^{2}}.
\end{equation*}

By (\ref{condTilt}), (\ref{num approx fixed i}) and (\ref{PI 0}) 
\begin{equation*}
p(\mathbf{X}_{i+1}=Y_{i+1}|S_{i+1}^{n}=na_{n}-\Sigma_{1}^{i})=\frac{\sqrt {%
n-i}}{\sqrt{n-i-1}}\pi^{m_{i}}\left( \mathbf{X}_{i+1}=Y_{i+1}\right) \frac{%
\mathfrak{n}\left( \frac{-Y_{i+1}}{s_{i}\sqrt{n-i-1}}\right) }{\mathfrak{n}%
(0)}A(i)
\end{equation*}
with 
\begin{equation*}
A(i):=\frac{1+\frac{\mu_{1}^{\ast}Y_{i+1}}{n-i-1}-\frac{\mu_{2}^{\ast}a}{%
n-i-1}-\frac{\kappa_{2}^{i}}{n-i-1}+\frac{o_{\mathfrak{P}_{n}}\left(
\epsilon_{n}\log n\right) }{n-i-1}}{1-\frac{\kappa_{2}^{i}}{n-i}+O_{%
\mathfrak{P}_{n}}\left( \frac{1}{(n-i)^{3/2}}\right) }.
\end{equation*}
We perform a second order expansion in both the numerator and the
denominator of the above expression, which yields

\begin{equation}
A(i)=\exp\left( \frac{\mu_{1}^{\ast}Y_{i+1}}{n-i-1}-\frac{a}{%
2s_{i}^{2}(n-i-1)}\right) \exp\left( -\frac{a\kappa_{1}^{i}}{n-i-1}\right)
\exp\left( \frac{o_{\mathfrak{P}_{n}}\left( \epsilon_{n}\log n\right) }{n-i-1%
}\right) A^{\prime}(i).  \label{A(i)}
\end{equation}

The term $\exp \left( \frac{\mu _{1}^{\ast }Y_{i+1}}{n-i-1}+\frac{a}{%
2s_{i}^{2}(n-i-1)}\right) $ in (\ref{A(i)}) is captured in $g_{i}(\left.
Y_{i+1}\right\vert Y_{1}^{i}).$

\bigskip The term $A^{\prime}(i)$ in (\ref{A(i)}) writes%
\begin{equation*}
A^{\prime}(i):=Q_{1}^{i}.Q_{2}^{i}
\end{equation*}
with 
\begin{equation*}
Q_{1}^{i}:=\exp\left( -\left( 
\begin{array}{c}
\frac{\kappa_{2}^{i}}{(n-i-1)(n-i)}+\frac{(\kappa_{2}^{i})^{2}}{2(n-i)^{2}}+%
\frac{1}{2}\left( \frac{\mu_{1}^{\ast}Y_{i+1}}{n-i-1}-\frac{a\mu_{2}^{\ast }%
}{n-i-1}-\frac{\kappa_{2}^{i}}{n-i-1}\right) ^{2}%
\end{array}
\right) \right)
\end{equation*}
and 
\begin{equation*}
Q_{2}^{i}:=\frac{\exp B_{1}}{\exp B_{2}}
\end{equation*}
where

\begin{align*}
B_{1} & :=\frac{o_{\mathfrak{P}_{n}}(\epsilon_{n}^{2}(\log n)^{2})}{%
(n-i-1)^{2}}+\frac{\mu_{1}^{\ast}Y_{i+1}}{(n-i-1)^{2}}o_{\mathfrak{P}%
_{n}}\left( \epsilon_{n} \log n\right) \\
& +\frac{\mu_{2}^{\ast}a}{(n-i-1)^{2}}o_{\mathfrak{P}_{n}}(\epsilon_{n}\log
n)+\frac{o_{\mathfrak{P}_{n}}(\epsilon_{n}^{2}\left(\log n)\right)^{2}}{(n-i-1)^{2}}+o(u_{1}^{2})
\end{align*}%
\begin{equation*}
B_{2}:=\frac{\kappa_{2}^{i}}{n-i}O_{\mathfrak{P}_{n}}\left( \frac {1}{%
(n-i)^{3/2}}\right) +O_{\mathfrak{P}_{n}}\left( \frac{1}{(n-i)^{3}}\right) +
\end{equation*}%
\begin{equation*}
O_{\mathfrak{P}_{n}}\left( \frac{1}{(n-i)^{3/2}}\right) +o\left( \left( 
\frac{\kappa_{2}^{i}}{n-i}+O_{\mathfrak{P}_{n}}\left( \frac{1}{(n-i)^{3/2}}%
\right) \right) ^{2}\right) .
\end{equation*}
with 
\begin{equation*}
u_{1}=\frac{\mu_{1}^{\ast}Y_{i+1}}{n-i-1}-\frac{\mu_{2}^{\ast}a}{n-i-1}-%
\frac{\kappa_{2}^{i}}{n-i-1}+\frac{o_{\mathfrak{P}_{n}}(\epsilon_{n}\log n)}{%
n-i-1}.
\end{equation*}

We first prove that%
\begin{equation}
\prod \limits_{i=0}^{k-1}A^{\prime}(i)=1+o_{\mathfrak{P}_{n}}(\epsilon_{n}%
\left( \log n\right) ^{2})  \label{cv produit des A'(i)}
\end{equation}
as $n$ tends to infinity.

Since 
\begin{equation*}
p(\mathbf{X}_{1}^{k}=Y_{1}^{k}|S_{i+1}^{n}=na_{n})=\prod
\limits_{i=0}^{k-1}g_{i}\left( \left. Y_{i+1}\right\vert Y_{1}^{i}\right)
\prod\limits_{i=0}^{k-1}A^{\prime }(i)\prod\limits_{i=0}^{k-1}L_{i}
\end{equation*}%
where 
\begin{equation*}
L_{i}:=\frac{C_{i}^{-1}}{\Phi \left( t_{i}\right) }\frac{\sqrt{n-i}}{\sqrt{%
n-i-1}}\exp \left( -\frac{a\kappa _{1}^{i}}{n-i-1}\right)
\end{equation*}%
the \ completion of the proof will follow from 
\begin{equation}
\prod\limits_{i=0}^{k-1}L_{i}=1+o_{\mathfrak{P}_{n}}(\epsilon _{n}\left(
\log n\right) ^{2}).  \label{cv produit des Li}
\end{equation}

The proof of (\ref{cv produit des A'(i)}) is achieved in two steps.

\begin{claim}
$\prod_{i=0}^{k-1}Q_{1}^{i}=1+o_{\mathfrak{P}_{n}}(\epsilon_{n}\left( \log n\right) ^{2}).$
\end{claim}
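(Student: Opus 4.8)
The plan is to pass to logarithms and bound the resulting sum term by term. Since
\begin{equation*}
\log Q_{1}^{i}=-\frac{\kappa_{2}^{i}}{(n-i-1)(n-i)}-\frac{(\kappa_{2}^{i})^{2}}{2(n-i)^{2}}-\frac{1}{2}\left(\frac{\mu_{1}^{\ast}Y_{i+1}-a\mu_{2}^{\ast}-\kappa_{2}^{i}}{n-i-1}\right)^{2},
\end{equation*}
it is enough to prove that $\sum_{i=0}^{k-1}\log Q_{1}^{i}=o_{\mathfrak{P}_{n}}(\epsilon_{n}(\log n)^{2})$, for then $\prod_{i=0}^{k-1}Q_{1}^{i}=\exp\big(o_{\mathfrak{P}_{n}}(\epsilon_{n}(\log n)^{2})\big)=1+o_{\mathfrak{P}_{n}}(\epsilon_{n}(\log n)^{2})$.

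First I would record the uniform-in-$i$ controls furnished by the appendix lemmas. Lemma \ref{LemmaMaxm_in} gives that $\max_{0\le i\le k-1}s_{i}^{2}$, $\max_{0\le i\le k-1}|\mu_{3}^{i}|$, $\max_{0\le i\le k-1}|\mu_{4}^{i}|$ are $O_{\mathfrak{P}_{n}}(1)$ and that $s_{i}^{2}$ stays bounded away from $0$; since the sequence $a$ is bounded, this yields $\max_{0\le i\le k-1}\big(|\kappa_{2}^{i}|+|\mu_{1}^{\ast}|+|\mu_{2}^{\ast}|\big)=O_{\mathfrak{P}_{n}}(1)$. Lemma \ref{Lemma max X_i under conditioning}, together with its analogue for the negative part (the left tail being at least as light under $\mathfrak{P}_{n}$ since $a>0$ and $\mathbf{X}$ has two-sided exponential tails under Cramer's condition), gives $\max_{1\le i\le k}|Y_{i}|=O_{\mathfrak{P}_{n}}(\log k)=O_{\mathfrak{P}_{n}}(\log n)$. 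Hence the numerator of the last bracket above is $O_{\mathfrak{P}_{n}}(\log n)$ uniformly in $i$.

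Next I would do the telescoping estimates. With the substitution $j=n-i$ one has $\sum_{i=0}^{k-1}\frac{1}{(n-i-1)(n-i)}=\sum_{j=n-k+1}^{n}\big(\frac{1}{j-1}-\frac{1}{j}\big)=\frac{1}{n-k}-\frac{1}{n}$, and by the same comparison $\sum_{i=0}^{k-1}\frac{1}{(n-i)^{2}}\le\frac{1}{n-k}$ and $\sum_{i=0}^{k-1}\frac{1}{(n-i-1)^{2}}=O\big(\frac{1}{n-k}\big)$ as $n\to\infty$ (using $n-k\to\infty$, which is forced by (E1)). Combining with the uniform bounds of the previous step,
\begin{equation*}
\sum_{i=0}^{k-1}\log Q_{1}^{i}=O_{\mathfrak{P}_{n}}\!\left(\frac{1}{n-k}\right)+O_{\mathfrak{P}_{n}}\!\left(\frac{(\log n)^{2}}{n-k}\right)=O_{\mathfrak{P}_{n}}\!\left(\frac{(\log n)^{2}}{n-k}\right).
\end{equation*}
Finally, (E1) says $\epsilon_{n}\sqrt{n-k}\to\infty$, i.e. $\epsilon_{n}^{2}(n-k)\to\infty$, so $\frac{1}{n-k}=o(\epsilon_{n}^{2})=o(\epsilon_{n})$ because $\epsilon_{n}\to 0$; hence $\frac{(\log n)^{2}}{n-k}=o(\epsilon_{n}(\log n)^{2})$, which closes the argument.

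The main obstacle is not any individual estimate but ensuring the $O_{\mathfrak{P}_{n}}$ bounds on $\kappa_{2}^{i},\mu_{1}^{\ast},\mu_{2}^{\ast}$ and on $Y_{i+1}$ hold \emph{uniformly} over all $i$ from $0$ to $k-1$ simultaneously; this is precisely the purpose of Lemmas \ref{LemmaMaxm_in} and \ref{Lemma max X_i under conditioning}, so the work reduces to combining them correctly and then checking, via (E1), that the largest surviving contribution $(\log n)^{2}/(n-k)$ is absorbed into $o_{\mathfrak{P}_{n}}(\epsilon_{n}(\log n)^{2})$.
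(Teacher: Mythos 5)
Your proof is correct, but it takes a genuinely different route from the paper's. The paper splits the exponent of $Q_{1}^{i}$ into eight families of terms and introduces events $A_{n}^{1},\dots,A_{n}^{8}$, each asserting that the corresponding sum is $o(\epsilon_{n}(\log n)^{2})$; the sums with deterministic coefficients are handled by direct summation, while the sums that are linear in $Y_{i+1}$ ($A_{n}^{5}$--$A_{n}^{7}$) and quadratic in $Y_{i+1}$ ($A_{n}^{8}$) are controlled by invoking Theorem \ref{TheoremTaylor}, the weak law of large numbers for row-wise exchangeable triangular arrays, with the moment and correlation inputs $E_{\mathfrak{P}_{n}}Y_{1}^{2}$, $E_{\mathfrak{P}_{n}}Y_{1}^{4}$ and $\rho_{n}$ supplied by Lemma \ref{LemmaMomentsunderE_n}. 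You instead bound every $Y$-dependent term by the sup-norm control $\max_{i}|Y_{i}|=O_{\mathfrak{P}_{n}}(\log n)$ and telescope the deterministic weights, accepting extra factors of $\log n$ or $(\log n)^{2}$ that are then absorbed into the tolerance $\epsilon_{n}(\log n)^{2}$ via (E1). Your route is more elementary and shorter, dispensing with the exchangeable-array LLN entirely; the paper's route yields the sharper estimate $O_{\mathfrak{P}_{n}}(1/(n-k))$ for the $Y$-dependent sums (no logarithmic loss), which would matter if one sought a finer error rate, though for the claim as stated both suffice. Two points you should make explicit: first, Lemma \ref{Lemma max X_i under conditioning} as stated bounds only $\max(\mathbf{X}_{1},\dots,\mathbf{X}_{k})$, so your use of $\max_{i}|Y_{i}|$ requires the two-sided version, which you correctly flag and which does follow from the two-sided Cram\'er condition (the paper itself uses this tacitly in Appendix B when bounding $(m_{i}-Y_{i+1})^{3}$); second, you implicitly need $\min_{i}s_{i}^{2}$ bounded away from zero to control $\kappa_{2}^{i}$, $\mu_{1}^{\ast}$, $\mu_{2}^{\ast}$, which follows from the uniform convergence of $m_{i}$ in Lemma \ref{LemmaMaxm_in} together with continuity and positivity of $t\mapsto s^{2}(t)$ — a gap the paper shares.
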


By Lemma \ref{LemmaMaxm_in} the random terms $\mu_{j}^{i}$ deriving from $%
\pi^{m_{i}}$ satisfy 
\begin{equation*}
max_{1\leq i\leq k}\left\vert \mu_{j}^{i}-\mu_{j}\right\vert =o_{\mathfrak{P}%
_{n}}(1)
\end{equation*}
as $n$ tends to $\infty,$ where $\mu_{j}$ is the $j$-th centered moment of $%
p.$ Therefore we may substitute $\mu_{j}^{i}$ by $\mu_{j}$ in order to check
the convergence of all subsequent series.

Developing $Q1,$ define, for any positive $\beta_{1}$, $\beta_{2}$, $\beta
_{3}$ and $\beta_{4}$ 
\begin{equation*}
A_{n}^{1}:=\left\{ \frac{1}{\epsilon_{n}\left( \log n\right) ^{2}}%
\sum_{i=0}^{k-1}\left\vert \frac{\kappa_{2}^{i}}{(n-i-1)(n-i)}\right\vert
<\beta_{1}\right\} ,
\end{equation*}%
\begin{equation*}
A_{n}^{2}:=\left\{ \frac{1}{\epsilon_{n}\left( \log n\right) ^{2}}%
\sum_{i=0}^{k-1}\left\vert \frac{(\kappa_{2}^{i})^{2}}{(n-i-1)^{2}}%
\right\vert <\beta_{2}\right\} ,
\end{equation*}%
\begin{equation*}
A_{n}^{3}:=\left\{ \frac{1}{\epsilon_{n}\left( \log n\right) ^{2}}%
\sum_{i=0}^{k-1}\left\vert \frac{(\mu_{2}^{\ast}a)^{2}}{(n-i-1)^{2}}%
\right\vert <\beta_{3}\right\}
\end{equation*}
and 
\begin{equation*}
A_{n}^{4}:=\left\{ \frac{1}{\epsilon_{n}\left( \log n\right) ^{2}}%
\sum_{i=0}^{k-1}\left\vert \frac{\mu_{2}^{\ast}\kappa_{2}^{i}a}{(n-i-1)^{2}}%
\right\vert <\beta_{4}\right\} .
\end{equation*}

It clearly holds that 
\begin{equation*}
\lim_{n\rightarrow\infty}\mathfrak{P}_{n}\left( A_{n}^{j}\right) =1;\text{ }%
j=1,...,4.
\end{equation*}

Let for any positive $\beta_{5}$ 
\begin{equation*}
A_{n}^{5}:=\left\{ \frac{1}{\epsilon_{n}\left( \log n\right) ^{2}}%
\sum_{i=0}^{k-1}\left\vert \frac{\kappa_{1}^{i}\kappa_{2}^{i}Y_{i+1}}{%
(n-i-1)^{2}}\right\vert <\beta_{5}\right\} .
\end{equation*}
If $\lim_{n\rightarrow\infty}\mathfrak{P}_{n}\left( A_{n}^{5}\right) =1$,
then $\lim_{n\rightarrow\infty}\mathfrak{P}_{n}\left( A_{n}^{j}\right) ,$ $%
j=6,7$ where

\begin{equation*}
A_{n}^{6}:=\left\{ \frac{1}{\epsilon_{n}\left( \log n\right) ^{2}}%
\sum_{i=0}^{k-1}\left\vert \frac{\mu_{1}^{\ast}\kappa_{2}^{i}Y_{i+1}}{%
(n-i-1)^{2}}\right\vert <\beta_{6}\right\}
\end{equation*}

\begin{equation*}
A_{n}^{7}:=\left\{ \frac{1}{\epsilon_{n}\left( \log n\right) ^{2}}%
\sum_{i=0}^{k-1}\left\vert \frac{\mu_{1}^{\ast}\mu_{2}^{\ast}aY_{i+1}}{%
(n-i-1)^{2}}\right\vert <\beta_{7}\right\} .
\end{equation*}

Apply Theorem \ref{TheoremTaylor} with $X_{i,n}=Y_{i+1}$ and $a_{i,n}=\frac {%
1}{\epsilon_{n}\left( \log n\right) ^{2}(n-i-1)^{2}}.$ By Lemma \ref%
{LemmaMomentsunderE_n}

\begin{equation*}
E_{\mathfrak{P}_{n}}Y_{1}^{2}=s^{2}(0)+a+O\left( \frac{1}{n}\right) .
\end{equation*}
Hence $E_{\mathfrak{P}_{n}}[Y_{1}^{2}]\leq{\Gamma}$ for some finite $\Gamma.$
Further $\rho_{n}=a^{2}+O\left( \frac{1}{n}\right) .$ Both conditions in Theorem 
\ref{TheoremTaylor} are fullfilled.\ Indeed

\begin{equation*}
\lim_{n\rightarrow\infty}\sum_{i=1}^{k}a_{n,i}^{2}=\lim_{n\rightarrow\infty }%
\frac{1}{\epsilon_{n}^{2}\left( \log n\right) ^{4}(n-k)^{3}}=0
\end{equation*}
which holds under (E1), as holds 
\begin{equation*}
\lim_{n\rightarrow\infty}\rho_{n}\left( \sum_{i=1}^{k}a_{n,i}\right)
^{2}=\lim_{n\rightarrow\infty}\frac{a^{2}}{\epsilon_{n}^{2}\left( \log n\right)
^{4}(n-k)^{2}}=0.
\end{equation*}

Therefore, for i between $5$ and $7$, we have 
\begin{equation*}
\lim_{n\rightarrow\infty}\mathfrak{P}_{n}\left( A_{n}^{i}\right) =1.
\end{equation*}

Define for any positive $\beta_{8}$%
\begin{equation*}
A_{n}^{8}:=\left\{ \frac{1}{\epsilon_{n}\left( \log n\right) ^{2}}%
\sum_{i=0}^{k-1}\frac{\left( \mu_{1}^{\ast}\right) ^{2}Y_{i+1}^{2}}{%
(n-i-1)^{2}}<\beta_{8}\right\} .
\end{equation*}

Apply Theorem \ref{TheoremTaylor} with $X_{i,n}=Y_{i+1}^{2}$ and $a_{i,n}=%
\frac{1}{\epsilon_{n}\left( \log n\right) ^{2}(n-i-1)^{2}}.$

It holds 
\begin{equation*}
\lim_{n\rightarrow\infty}\sum_{i=1}^{k}a_{n,i}^{2}=0
\end{equation*}
when (E1) holds.

By Lemma \ref{LemmaMomentsunderE_n}, 
\begin{equation*}
E_{\mathfrak{P}_{n}}Y_{1}^{4}=E_{\pi^{a}}Y_{1}^{4}+O\left( \frac{1}{n}\right)
\end{equation*}
which entails that for some positive constant $\Gamma$ $\ $such that $%
EY_{1}^{4}\leq{\Gamma}<\infty.$ Also%
\begin{equation*}
E_{\mathfrak{P}_{n}}\left( Y_{1}^{2}Y_{2}^{2}\right) =\left(
s^{2}(0)+a\right) \left( s^{2}(0)+a\right) +O\left( \frac{1}{n}\right)
\end{equation*}
and 
\begin{equation*}
\lim_{n\rightarrow\infty}\rho_{n}\left( \frac{1}{\epsilon_{n}\left( \log
n\right) ^{2}}\sum_{i=0}^{k-1}\frac{1}{(n-i-1)^{2}}\right) ^{2}=0
\end{equation*}
which holds under (E1). Hence%
\begin{equation*}
\lim_{n\rightarrow\infty}\mathfrak{P}_{n}\left( A_{n}^{8}\right) =1.
\end{equation*}
It follows that, noting $A_{n}$ the intersection of the events $A_{n}^{i}$ , 
$j=1,...,8$

\begin{equation*}
\lim_{n\rightarrow\infty}\mathfrak{P}_{n}\left( A_{n}\right) =1.
\end{equation*}
To sum up, we have proved that, under (E1), 
\begin{equation*}
Q1=1+o_{\mathfrak{P}_{n}}\left( \epsilon_{n}\left( \log n\right) ^{2}\right)
.
\end{equation*}

\begin{claim}
$\prod_{i=0}^{k-1}Q_{2}^{i}=1+o_{\mathfrak{P}_{n}}\left( \epsilon_{n}\left( \log n\right)^{2}\right) .$
\end{claim}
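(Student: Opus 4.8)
The plan is to write $\prod_{i=0}^{k-1}Q_{2}^{i}=\exp\!\left(\sum_{i=0}^{k-1}B_{1}(i)-\sum_{i=0}^{k-1}B_{2}(i)\right)$, where $B_{1}(i),B_{2}(i)$ denote the quantities $B_{1},B_{2}$ attached to the index $i$ in the displays above. Since $\exp(x)=1+x+o(x)$ as $x\to 0$, it suffices to prove that each of $\sum_{i=0}^{k-1}B_{1}(i)$ and $\sum_{i=0}^{k-1}B_{2}(i)$ is $o_{\mathfrak{P}_{n}}\!\left(\epsilon_{n}(\log n)^{2}\right)$. The $o_{\mathfrak{P}_{n}}$ and $O_{\mathfrak{P}_{n}}$ remainders inside $B_{1},B_{2}$ are, by the way they were produced in Appendix B, uniform in $i\in\{0,\dots,k-1\}$ by Lemma \ref{LemmaMaxm_in} and Lemma \ref{Lemma max X_i under conditioning}, so they may be pulled out of the sums. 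Throughout I will use $\sum_{i=0}^{k-1}(n-i)^{-\gamma}=\sum_{j=n-k+1}^{n}j^{-\gamma}=O\!\left((n-k)^{1-\gamma}\right)$ for $\gamma>1$, the bound $\max_{0\le i\le k-1}\lvert Y_{i+1}\rvert=O_{\mathfrak{P}_{n}}(\log n)$ from Lemma \ref{Lemma max X_i under conditioning}, and the uniform boundedness of $\kappa_{1}^{i},\kappa_{2}^{i},\mu_{1}^{\ast},\mu_{2}^{\ast}$ from Lemma \ref{LemmaMaxm_in}.

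For $\sum_{i=0}^{k-1}B_{2}(i)$: bounding $\kappa_{2}^{i}$ uniformly by a constant, every summand of $B_{2}(i)$ is $O_{\mathfrak{P}_{n}}\!\left((n-i)^{-3/2}\right)$, whence $\sum_{i=0}^{k-1}B_{2}(i)=O_{\mathfrak{P}_{n}}\!\left((n-k)^{-1/2}\right)$. Since $(n-k)^{-1/2}/\bigl(\epsilon_{n}(\log n)^{2}\bigr)\le 1/\bigl(\epsilon_{n}\sqrt{n-k}\bigr)\to 0$ by (E1), this term is of the required order.

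For $\sum_{i=0}^{k-1}B_{1}(i)$ I treat the summands by type. The two terms of the form $(n-i-1)^{-2}o_{\mathfrak{P}_{n}}(\epsilon_{n}^{2}(\log n)^{2})$ and the term $(n-i-1)^{-2}\mu_{2}^{\ast}a\,o_{\mathfrak{P}_{n}}(\epsilon_{n}\log n)$ sum to $O\!\left((n-k)^{-1}\right)o_{\mathfrak{P}_{n}}(\epsilon_{n}\log n)=o_{\mathfrak{P}_{n}}(\epsilon_{n}(\log n)^{2})$ because $n-k\to\infty$. For $(n-i-1)^{-2}\mu_{1}^{\ast}Y_{i+1}\,o_{\mathfrak{P}_{n}}(\epsilon_{n}\log n)$, either use the crude bound $\sum_{i}\lvert Y_{i+1}\rvert(n-i-1)^{-2}=O_{\mathfrak{P}_{n}}(\log n)\,O\!\left((n-k)^{-1}\right)$, or, in the spirit of the previous Claim, invoke Theorem \ref{TheoremTaylor} with $X_{i,n}=Y_{i+1}$ and $a_{i,n}=\bigl(\epsilon_{n}(\log n)^{2}(n-i-1)^{2}\bigr)^{-1}$, whose hypotheses hold under (E1) by Lemma \ref{LemmaMomentsunderE_n}; either way the contribution is $o_{\mathfrak{P}_{n}}(\epsilon_{n}(\log n)^{2})$. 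Finally, for the Taylor remainder $o(u_{1}^{2})$, Lemmas \ref{LemmaMaxm_in} and \ref{Lemma max X_i under conditioning} give $\lvert u_{1}(i)\rvert=O_{\mathfrak{P}_{n}}\bigl(\log n/(n-i-1)\bigr)$ uniformly in $i$, so that $u_{1}(i)\to 0$ uniformly, $\lvert o(u_{1}(i)^{2})\rvert\le u_{1}(i)^{2}$ for $n$ large, and $\sum_{i=0}^{k-1}u_{1}(i)^{2}=O_{\mathfrak{P}_{n}}\bigl((\log n)^{2}(n-k)^{-1}\bigr)$; dividing by $\epsilon_{n}(\log n)^{2}$ gives $O_{\mathfrak{P}_{n}}\bigl(1/(\epsilon_{n}(n-k))\bigr)$, which tends to $0$ since $\epsilon_{n}(n-k)=\bigl(\epsilon_{n}\sqrt{n-k}\bigr)\sqrt{n-k}\to\infty$ under (E1). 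Hence $\sum_{i=0}^{k-1}B_{1}(i)=o_{\mathfrak{P}_{n}}(\epsilon_{n}(\log n)^{2})$.

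Inserting the two bounds into the exponential proves the Claim, and together with the previous Claim on $\prod_{i}Q_{1}^{i}$ and the identity $A^{\prime}(i)=Q_{1}^{i}Q_{2}^{i}$ this yields (\ref{cv produit des A'(i)}). I expect the only delicate point to be the initial reduction: one must be sure that all the $o_{\mathfrak{P}_{n}}$ and $O_{\mathfrak{P}_{n}}$ remainders carried over from the Edgeworth expansions of Appendix B are genuinely uniform in $i$, so that they survive the summation over the $k\sim n$ indices. That uniformity is precisely what the maximal inequality of Lemma \ref{LemmaMaxm_in} (for the tilted moments) and the bound on $\max_{i}Y_{i+1}$ of Lemma \ref{Lemma max X_i under conditioning} provide; once it is granted, every series above converges at the required rate under the single condition (E1).
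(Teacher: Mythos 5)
Your proof is correct and follows essentially the same route as the paper's: write $\prod_i Q_2^i$ as $\exp\bigl(\sum_i B_1(i)-\sum_i B_2(i)\bigr)$, bound $\sum_i B_2(i)=O_{\mathfrak{P}_n}\bigl((n-k)^{-1/2}\bigr)$ via uniformity of the Edgeworth remainders, and control each type of term in $\sum_i B_1(i)$ using Lemmas \ref{LemmaMaxm_in} and \ref{Lemma max X_i under conditioning} together with $\sum_i(n-i)^{-2}=O\bigl((n-k)^{-1}\bigr)$ and (E1). If anything, your explicit treatment of the $o(u_1^2)$ remainder (showing $\sum_i u_1(i)^2=O_{\mathfrak{P}_n}\bigl((\log n)^2/(n-k)\bigr)$ and invoking $\epsilon_n(n-k)\to\infty$) is more careful than the paper's one-line dismissal of that term.
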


This amounts to prove that the sum of the terms in $B1$ (resp in $B2$) is of
order $o_{\mathfrak{P}_{n}}\left( \epsilon_{n}\left( \log n\right)
^{2}\right) .$

The four terms in the the sum of the terms in $B1$ are respectively of order 
$o_{\mathfrak{P}_{n}}\left( \epsilon_{n}^{2}(\log n)^{4}\right) /(n-k)$, $o_{%
\mathfrak{P}_{n}}\left( \epsilon_{n}(\log n)^{3}\right) /(n-k),$ $o_{%
\mathfrak{P}_{n}}\left( a\epsilon_{n}(\log n)^{2}\right) /(n-k)$ and $o_{%
\mathfrak{P}_{n}}\left( \epsilon_{n}(\log n)^{2}\right) /(n-k)$ using Lemma %
\ref{LemmaMaxm_in}. The sum of the terms $o\left( u_{1}^{2}\right) $ is of
order less than those ones.\ Assuming (E1) all those terms are $o_{\mathfrak{%
P}_{n}}\left( \epsilon_{n}\left( \log n\right) ^{2}\right) .$

For the sum of terms $B2$, by uniformity of the Edgeworth expansion with
respect to $Y_{1}^{k}$ it holds $\sum_{i=1}^{k}B2=O_{\mathfrak{P}_{n}}\left(
\left( n-k\right) ^{-1/2}\right) $ which is $o_{\mathfrak{P}_{n}}\left(
\epsilon_{n}\left( \log n\right) ^{2}\right) $ by (E1).

\bigskip

We now turn to the proof of (\ref{cv produit des Li})

Define 
\begin{equation*}
u:=-x\frac{\mu_{3}^{i}}{2s_{i}^{4}\left( n-i-1\right) }+\frac{(x-a)^{2}}{%
2s_{i}^{2}\left( n-i-1\right) }.
\end{equation*}
Use the classical bounds 
\begin{equation*}
1-u+\frac{u^{2}}{2}-\frac{u^{3}}{6}\leq e^{-u}\leq1-u+\frac{u^{2}}{2}
\end{equation*}
to obtain on both sides of the above inequalities the second order
approximation of $C_{i}^{-1}$ through integration with respect to $p.$ The
upper bound yields

\begin{align*}
C_{i}^{-1} & \leq\Phi(t_{i})+\frac{\kappa_{1}^{i}}{n-i-1}\Phi^{%
\prime}(t_{i})+\frac{1}{s_{i}^{2}(n-i-1)}\left(
\Phi"(t_{i})-2a\Phi^{\prime}\left( t_{i}\right) +a^{2}\right) \\
& +O_{\mathfrak{P}_{n}}\left( \frac{1}{(n-i-1)^{2}}\right)
\end{align*}
from which 
\begin{equation*}
L_{i}\leq\frac{\sqrt{n-i}}{\sqrt{n-i-1}}\exp\left( -\frac{a\kappa_{1}^{i}}{%
n-i-1}\right) \left( 
\begin{array}{c}
1+\frac{\kappa_{1}^{i}}{n-i-1}m_{i} \\ 
-\frac{s_{i}^{2}+m_{i}^{2}-2am_{i}+a^{2}}{2s_{i}^{2}\left( n-i-1\right) }+O_{%
\mathfrak{P}_{n}}\left( \frac{1}{\left( n-i-1\right) ^{2}}\right)%
\end{array}
\right)
\end{equation*}
where the approximation term is uniform on the $Y_{1}^{k}.$

Subsituting $\frac{\sqrt{n-i}}{\sqrt{n-i-1}}$ and $\exp\left( -\frac {%
a\kappa_{1}^{i}}{n-i-1}\right) $ by their expansion $1+\frac{1}{2\left(
n-i-1\right) }+O\left( \frac{1}{\left( n-i-1\right) ^{2}}\right) $ and $1-%
\frac{a\kappa_{1}^{i}}{n-i-1}+\frac{(a\kappa_{1}^{i})^{2}}{(n-i-1)^{2}}%
+O\left( \frac{a^{2}}{(n-i-1)^{2}}\right) $ in the upper bound of $L_{i}$ \
above yields

\begin{align*}
L_{i} & \leq\ \left( 1+\frac{1}{2(n-i-1)}-\frac{a\kappa_{1}^{i}}{n-i-1}+%
\frac{(a\kappa_{1}^{i})^{2}}{2(n-i-1)^{2}}+o\left( \frac {1}{(n-i-1)^{2}}%
\right) \right) \\
& \left( 1+\frac{\kappa_{1}^{i}m_{i}}{n-i-1}-\frac{%
s_{i}^{2}+m_{i}^{2}-2am_{i}+a^{2}}{2s_{i}^{2}(n-i-1)}+O_{\mathfrak{P}%
_{n}}\left( \frac {1}{(n-i-1)^{2}}\right) \right) .
\end{align*}

Using Lemma \ref{LemmaMaxm_in}, $m_{i}^{2}-2am_{i}+a^{2}=o_{\mathfrak{P}%
_{n}}(a\epsilon_{n})$ and therefore

\begin{align*}
L_{i} & \leq\ \left( 1+\frac{1}{2(n-i-1)}-\frac{a\kappa_{1}^{i}}{n-i-1}+%
\frac{(a\kappa_{1}^{i})^{2}}{(n-i-1)^{2}}+o\left( \frac {1}{(n-i-1)^{2}}%
\right) \right) \\
& \left( 1+\frac{\kappa_{1}^{i}a}{n-i-1}-\frac{1}{2(n-i-1)}+\frac {o_{%
\mathfrak{P}_{n}}(a\epsilon_{n})}{n-i-1}\right) .
\end{align*}
Write 
\begin{equation*}
\prod_{i=1}^{k}L_{i}\leq{\prod_{i=1}^{k}}\left( {1+M_{i}}\right)
\end{equation*}
with 
\begin{equation*}
M_{i}=\frac{(a\kappa_{1}^{i})^{2}}{(n-i-1)^{2}}+\frac{o_{\mathfrak{P}%
_{n}}(a\epsilon_{n})}{n-i-1}.
\end{equation*}

Under (A) and (E1), $\sum_{i=0}^{k-1}M_{i}$ is $o_{\mathfrak{P}_{n}}\left(
\epsilon_{n}\left( \log n\right) ^{2}\right) .$ This closes the proof of the
Theorem.

\end{document}